\theoremstyle{plain}
\newtheorem{thm}{Theorem}[section]
\newtheorem{corollary}[thm]{Corollary}
\newtheorem{example}[thm]{Example}
\newtheorem{lemma}[thm]{Lemma}
\newtheorem{problem}[thm]{Problem}
\newtheorem{proposition}[thm]{Proposition}
\newtheorem{remark}[thm]{Remark}
\newtheorem{theorem}[thm]{Theorem}
\numberwithin{equation}{section}
\newcommand{\N}{\mathbb{N}}
\newcommand{\R}{\mathbb{R}}
\newcommand{\norm}[1]{\left\lVert #1 \right\rVert}
\DeclareMathOperator{\lip}{Lip\,\!}
\DeclareMathOperator{\sign}{sign\,\!}
\DeclareMathOperator{\co}{co\,\!}
\DeclareMathOperator{\conv}{conv\,\!}
\newcommand{\ud}[0]{\,\mathrm{d}}
\begin{document}
\title[Differentiable convex extensions with sharp Lipschitz constants]{Differentiable convex extensions with sharp Lipschitz constants}
%\title{Extension operators for $C^1$ and $C^{1,\omega}$ convex functions preserving Lipschitz constants}

\author[Thomas Deck]{Thomas Deck}
\address[T.D.]{Department of Mathematical Sciences, Norwegian University of Science and Technology, 7941 Trondheim, Norway}
\email{tfdeck@ntnu.no}

 \author[Carlos Mudarra]{Carlos Mudarra}
\address[C.M.]{Department of Mathematical Sciences, Norwegian University of Science and Technology, 7941 Trondheim, Norway}
\email{carlos.mudarra@ntnu.no}

\date{\today}

\makeatletter
\@namedef{subjclassname@2020}{{\mdseries 2020} Mathematics Subject Classification}
\makeatother

\keywords{convex function, Lipschitz functions, superreflexive space, Whitney extension theorem}

\subjclass[2020]{26A16, 26B05, 26B25, 46T20, 52A41, 54C20, 90C25}

% 54C20: Extensions of maps
% 52A41: Convex functions and convex programs in convex geometry 
% 26B25: Convexity of real functions of several variables, generalizations
% 90C25: Convex programming

\begin{abstract}
Given a superreflexive Banach space $X$, and a set $E \subset X$, we characterise the $1$-jets $(f,G)$ on $E$ that admit $C^{1,\omega}$ convex extensions $(F,DF)$ to all of $X$; where $\omega$ is any admissible modulus of continuity depending on the regularity of $X$. Moreover, we obtain precise estimates for the growth of the $C^{1,\omega}$ seminorm of the extensions with respect to the initial data. We show how these estimates can be improved in the Hilbert setting, and are asymptotically sharp for H\"older moduli. Remarkably, our extensions have the sharp Lipschitz constant $\mathrm{Lip}(F,X) = \|G\|_{L^\infty(E)}$, when $G$ is a bounded map. All these extensions are given by simple and explicit formulas. We also prove a similar theorem for $C^1$ convex extensions of jets defined on compact subsets $E$ of superreflexive spaces $X$, with the sharp Lipschitz constant too. The results are new even when $X=\mathbb{R}^n.$
\end{abstract}

\maketitle

\section{Introduction and main results}

By a modulus of continuity $\omega$, we understand a non-decreasing and concave function $\omega : [0,+\infty) \to [0,+\infty)$ with $\omega(0)=\omega(0^+)=0$. 
Given two normed spaces $(X, \|\cdot\|_X)$, $(Y, \| \cdot \|_Y)$, a subset $E \subset X$, and a map $H: E \to Y$, we denote 
$$
\lip_\omega(H,E):= \sup_{ x,z\in E, \, x \neq z} \frac{\|H(x)-H(z)\|_Y}{\omega(\|x-z\|_X)}.
$$
For H\"older modulus $\omega(t)=t^\alpha$, $\alpha \in (0,1],$ we will abbreviate by $\lip_\omega(H,E)=\lip_\alpha(H,E)$. And for Lipschitz functions, that is, when $\omega(t)=t$, we will simply write $\lip_\omega(H,E)= \lip(H,E).$

If $X$ is a normed space, we will denote by $X^*$ its dual space. Then $C^1(X)$ is the class of real-valued functions $F:X \to \R$ that are everywhere Fréchet differentiable in $X$ and have continuous Fréchet derivative $DF : X \to X^*$ in $X.$ And for a modulus of continuity $\omega$, the class $C^{1,\omega}(X)$ consists of functions $F\in C^1(X)$ so that $DF: X \to X^*$ is uniformly continuous with modulus $\omega;$ meaning that 
$
\lip_\omega(DF,X) < \infty. 
$
For functions with H\"older (resp. Lipschitz) derivatives, we will use the notation $C^{1,\alpha}(X)$ (resp. $C^{1,1}(X)$).

\smallskip

This paper concerns the following general Whitney-type problem for convex functions.

\begin{problem}\label{prob:general}
Let $X$ be Banach space, $E \subset X$ a set, $f :E \to \R$, $G:E \to X^*$ two functions, and $\omega$ a modulus of continuity. 

What are the necessary and sufficient conditions on $(f,G)$ that would guarantee the existence of a \emph{convex} function $F: X \to \R$ of class $C^1(X)$ or $C^{1,\omega}(X)$ so that $F=f$ and $DF =G$ on $E$?

In the case of $C^{1,\omega}(X)$ extensions $F$, what would be quasi-sharp estimates for $\lip_\omega(DF,X)$ in terms of the data $(f,G)$?

If $G$ is, in addition, a bounded map, can those extensions $F \in C^{1,\omega}(X)$ or $F\in C^1(X)$ be taken to be Lipschitz, with a quasi-sharp estimate for $\lip(F,X)$ in terms of $G$?

\end{problem}

Significant progress on the solution to Problem \ref{prob:general} has been made in the last few years, especially concerning the first two questions. In the case $X=\R^n$ and $E \subset \R^n$ a compact set, precise necessary and sufficient conditions to guarantee extendability of jets $(f,G)$ in $E$ by $C^1(\R^n)$ convex functions were found in \cite{AM17}. Those extensions are Lipschitz, with bounds of the form $\lip(F,\R^n) \leq C(n) \sup_E |G|$, for a dimensional constant $C(n).$ This was extended to the Hilbert space $X$, for a compact $E \subset X$, with the bound $\lip(F, X) \leq 5 \sup_E |G|.$ In the much more complicated case $X=\R^n$, $E \subset \R^n$ arbitrary, and the class $C^1(\R^n)$, a full answer to the first question was given in \cite{AM19APDE}. Moreover, in the case of bounded maps $G :E \to \R^n$, the extensions $F$ were Lipschitz, with an estimate of the type $\lip(F,\R^n) \leq C(n) \sup_E |G|$, for a dimensional constant $C(n).$

In a Hilbert space $X$, and $E \subset X$, and the $C^{1,1}$ class, the results in \cite{ALeGM18} provide a complete answer to the first two questions, by means of simple and explicit formulas for the extensions $F.$ Moreover, these extensions have the sharp Lipschitz constant $\lip(\nabla F,X)$ for the gradient $\nabla F$. A similar result for $C^{1,\omega}$ extensions was obtained in \cite[Theorem 4.11]{ALeGM18}, with a bound for $\lip_\omega(\nabla F,X)$ in terms of a suitable constant associated with $(f,G)$, and a multiplicative factor $C=8.$

Moreover, in the case where $X$ is a superreflexive Banach space admitting a renorming with modulus of smoothness of power type $1+\alpha,$ $\alpha\in (0,1]$, the full answer to the first question for $E$ arbitrary and the class $C^{1,\alpha}(X)$ was given in \cite[Theorem 5.5]{ALeGM18}.

The third question in Problem \ref{prob:general} concerning Lipschitz-condition preservation (in the case of bounded maps $G$) was not addressed in \cite{ALeGM18}. 
 
\medskip

Our contributions in this paper are as follows:

\begin{itemize}
    \item In a superreflexive Banach space $X$ admitting a renorming of smoothness $1+\alpha,$ we prove an extension theorem for $C^{1,\omega}$ convex functions, where $\omega$ is \textit{any} modulus of continuity that is \textit{dominated} by $\alpha,$ thus generalising \cite[Theorem 5.5]{ALeGM18}.
    \item In Hilbert spaces $X$, we significantly improve the estimates for the $\omega$-seminorm $\lip_\omega(DF,X)$ of the derivatives of the extensions $F$ obtained in \cite[Theorem 4.11]{ALeGM18}, for any modulus of continuity. These estimates can be further improved in the $\alpha$-H\"older case, and they are \textit{asymptotically sharp}, in the sense that they converge to $1$ when $\alpha \to 1^-.$ 
    \item We prove that, when $G$ is bounded on $E,$ one can construct convex extensions $F \in C^{1,\omega}(X)$ with the same quasi-optimal bounds for $\lip_\omega(DF,X)$, and the additional feature that $F$ \textit{has the sharp Lipschitz constant}:
    $$
    \lip(F, X) = \sup_{z\in E} \|G(z)\|_*.
    $$
    This is achieved via a \textit{Lipschitz-convex envelope}, for which we have found explicit formulas as an infimum of convex combinations of parabolas defined by the initial jet. 
    \item We solve Problem \ref{prob:general} for $C^1$ regularity in the case where $X$ is a superreflexive space and $E \subset X$ is compact; in particular, extending \cite[Theorem 1]{AM20} from the Hilbert to the superreflexive setting. Our extensions $F$ have the sharp Lipschitz constant $\lip(F)=\sup_{z\in E} \|G(z)\|_*$ as well.
\end{itemize}

To our knowledge, the parts concerning the sharp Lipschitz constant are new even when $X=\R^n$, both for $C^1$ and $C^{1,\omega}$ extensions.

\medskip

The fundamental starting point for this type of problem is the famous Whitney Extension Theorem \cite{W34}, which provides necessary and sufficient conditions for jets defined on subsets of $\R^n$ to admit an extension of class $C^m(\R^n)$. The first version of this theorem for $C^{m,\omega}$ functions appeared in the work of Glaeser \cite{G58}, along with the solution of the corresponding problem for functions (instead of jets) for the class $C^1(\R^n).$ A sharp form of Whitney-Glaeser theorem for $C^{1,1}$ was obtained in \cite{W73}, and then an alternate and much shorter proof of this theorem appeared in \cite{LeG09}. However, as a consequence of the solution to the corresponding problem for $C^{1,1}$ convex functions, it was possible to obtain the sharp $C^{1,1}$ theorem for general functions via simple and explicit formulas; see \cite[Theorem 3.4]{ALeGM18}. The first infinite-dimensional Whitney-type theorem for $C^{1,\omega}$ arbitrary functions was recently obtained in \cite{AM21}. We also refer to \cite{JSSG13} for the first version of the Whitney's theorem for the class $C^1$ in Hilbert and more general Banach spaces, including the vector-valued setting. See also \cite{JZ25} for recent results on smooth extensions between Banach spaces, including $C^1$ extensions of functions from quasiconvex open sets.

For Whitney-type extensions of jets generated by Sobolev functions in $\R^n$, see, for
instance, the recent work \cite{S17}. And for
extension results for functions (instead of jets) of order $C^m$ and $C^{m,\omega}$ and for Sobolev functions, we refer to the papers \cite{BS01}, \cite{F05,F06}, \cite{FIL14}. This list is by no means exhaustive.

\medskip

Let us now describe the necessary conditions and formulas to state our results. Given a modulus of continuity $\omega,$ we define the associated integral function
$$
\varphi_\omega(t) = \int_0^t \omega(s) \ud s, \quad t\geq 0. 
$$ 

If $\emptyset \neq E \subset X$ is a set, a $1$-jet is a couple $(f,G),$ with $f: E \to \R$ and $G : E \to X^*.$ We define the \textit{condition \eqref{eq:definitionCw1omega_forallx} with constant $M>0$} by
\begin{equation}\label{eq:definitionCw1omega_forallx}
f(y)  + G(y)(x-y) + M \varphi_\omega(\|x-y\|) \geq f(z) + G(z)(x-z) , \quad \text{for all} \quad y,z\in E, \, x\in X. \tag{$\widetilde{CW^{1,\omega}}$}
\end{equation}
In order to gauge the optimal $M>0$ in this condition we introduce the following constant:
\begin{equation}\label{eq:definitionseminormA}
A_{\omega, \co}(f,G,E) : = \sup  \left\lbrace \frac{f(z)+G(z)(x-z)-f(y)-G(y)(x-y)}{\varphi_\omega(\|x-y\|)} \, : \, y,z\in E, \, x\in X, \ x \neq y \right\rbrace,
\end{equation}
for any jet $(f,G) : E \to \R \times X^*.$ Note that $0 \leq A_{\omega, \co}(f,G,E) \leq +\infty$ and that $A_{\omega,\co}(f,G,E)$ is the smallest $M \geq 0$ for which $(f,G)$ satisfies the condition \eqref{eq:definitionCw1omega_forallx} with constant $M$ on $E.$ For convex functions $F\in C^{1,\omega}(X)$ one has
$$
A_{\omega, \co}(F,DF,X) \leq \lip_\omega(DF,X);
$$
see Lemma \ref{lem:necessityofCW1omega}. See also Theorem \ref{thm:relationsAseminorm_Lip} below for a precise reverse inequality for jets, which happens to be optimal for H\"older modulus.

\smallskip

If, in addition, $\omega$ is increasing and coercive (that is, $\lim_{t \to \infty} \omega(t)= \infty$), an alternate and very useful reformulation of \eqref{eq:definitionCw1omega_forallx} is the following condition \eqref{eq:definitionCw1omega} with constant $M>0:$
\begin{equation}\label{eq:definitionCw1omega}
f(y) \geq f(z) + G(z)(y-z) + M (\varphi_\omega)^* \left( \frac{1}{M} \|G(y)-G(z)\|_* \right), \quad \text{for all} \quad y,z\in E; \tag{$CW^{1,\omega}$}
\end{equation}
where $(\varphi_\omega)^*$ is the Fenchel conjugate of the function $\varphi_\omega.$ It was shown in \cite{AM17} that \eqref{eq:definitionCw1omega} is the necessary and sufficient condition to extend jets with convex $C^{1,\omega}$ functions in Hilbert spaces, and in superreflexive spaces for some H\"older modulus.

Observe that, unlike for \eqref{eq:definitionCw1omega_forallx}, the condition \eqref{eq:definitionCw1omega} only involves points $y,z\in E$. This \textit{intrinsic} character makes the condition \eqref{eq:definitionCw1omega} easily testable. Interestingly, it will be shown in Lemma \ref{lem:rewriteCW1omega} that these conditions are fully identical.

\smallskip

The \textit{convex envelope} $\conv(g)$ of a function $g:X \to \R$ with a continuous affine minorant in $X$ is
\begin{equation}\label{eq:definition_convexenvelope}
\conv(g)(x)  = \sup\{ h(x) \, : \, h : X \to \R \, \textrm{ is convex and continuous, } \, h\leq g \text{ on } X\}   \quad x\in X. 
\end{equation}

Alternate well-known formulas for $\conv(g)$ are collected in Section \ref{sect:formulae_convexenvelopes}.

\smallskip
 
Moreover, if $L>0$ and $g: X \to \R$ has an $L$-Lipschitz affine minorant in $X,$ we define the \textit{convex and $L$-Lipschitz envelope of $g$} by:
\begin{equation}\label{eq:definition_convex_L_envelope}
\conv_L(g)  =  \sup\{ h(x) \, : \, h : X \to \R \, \textrm{ is convex and $L$-Lipschitz, } \, h\leq g \text{ on } X\}   \quad x\in X.
\end{equation}

In Section \ref{sect:formulae_convexenvelopes}, Lemma \ref{lem:formulae_convL} we will give two alternate formulas for $\conv_L(g).$

\medskip

Now, let $(X, \| \cdot \|)$ be a superreflexive Banach space. By Pisier's renorming theorem \cite[Theorem 3.1]{P75}, there exists an equivalent norm in $X$ (which we keep denoting by $\|\cdot \|$) with \textit{modulus of smoothness of power type} $1+\alpha$. This means that there is $C>0$ with
\begin{equation}\label{eq:modulussmoothnessalpha}
\lambda \|x\|^{1+\alpha} + (1-\lambda)\|y\|^{1+\alpha} -  \|\lambda x + (1-\lambda)y\|^{1+\alpha} \leq C \cdot \lambda (1-\lambda)  \|x-y\|^{1+\alpha}, \quad \lambda \in [0,1], \, x,y \in X.
\end{equation}

Our first main result is as follows.

\begin{theorem}\label{thm:mainsuperreflexive}
Assume that $(X,\| \cdot\|)$ satisfies \eqref{eq:modulussmoothnessalpha} and let $\omega$ be a modulus of continuity with the property that
\begin{equation}\label{eq:assumptiononOmegaAlpha}
t \mapsto \frac{t^\alpha}{\omega(t)} \quad \text{is non-decreasing}.
\end{equation}
Let $E \subset X$ and $(f,G)$ a jet in $E.$ Then there exists $F\in C^{1,\omega}(X)$ convex with $(F,D F) = (f,G)$ on $E$ if and only if $(f,G)$ satisfies the condition \eqref{eq:definitionCw1omega_forallx} for some $M>0.$ Moreover, the formula
\begin{equation}\label{eq:formulaextensionC1wsuperreflexive}
F:= \conv(g), \quad g(x) = \inf_{y\in E} \lbrace f(y) + G(y)(x-y) +M  \varphi_\omega(\|x-y\|) \rbrace, \quad x\in X
\end{equation}
defines such an extension, and
\begin{equation}\label{eq:estimate_extensionoperator_C1wsuperreflexive}
A_{\omega, \co}(F,DF,X) \leq C(\alpha)   \cdot C \cdot M \quad \text{and} \quad \lip_\omega(DF,X) \leq \frac{4}{3} \cdot C(\alpha) \cdot C \cdot M;
\end{equation}
where $C$ is that of \eqref{eq:modulussmoothnessalpha}, and $C(\alpha)>0$ depends only on $\alpha.$

Moreover, if in addition $G: E \to X^*$ is bounded and $L:=\sup_{z\in E} \|G(z)\|_*$, then the function
\begin{equation}\label{eq:formulaF_LinTheoremsuperreflexive}
F_L:= \conv_L(g)    
\end{equation}
has the same properties as those of $F$, with the additional feature that
\begin{equation}\label{eq:mainthereomsuperrefl_Lipschitzpresserved}
  \sup_{x\in X} \|DF_L(x) \|_* = \lip(F_L) = L = \sup_{z\in E} \|G(z)\|_*
\end{equation}

Furthermore, if $\omega$ is an increasing modulus satisfying $\lim_{t \to \infty} \omega(t)=\infty,$ then all the above statements are valid with condition \eqref{eq:definitionCw1omega} with constant $M>0$ in place of \eqref{eq:definitionCw1omega_forallx}. 
\end{theorem}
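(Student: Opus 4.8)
The plan is to establish the three assertions (existence/formula for $F$, the Lipschitz-preserving variant $F_L$, and the reformulation via $CW^{1,\omega}$) in that order, relying throughout on the modulus-of-smoothness inequality \eqref{eq:modulussmoothnessalpha} and the monotonicity hypothesis \eqref{eq:assumptiononOmegaAlpha}. For the necessity direction, if such a convex $F\in C^{1,\omega}(X)$ exists then $(F,DF)$ satisfies \eqref{eq:definitionCw1omega_forallx} with $M=\lip_\omega(DF,X)$ by Lemma \ref{lem:necessityofCW1omega}, and since $(f,G)$ agrees with $(F,DF)$ on $E$ the same inequality holds for $(f,G)$. For sufficiency, assume \eqref{eq:definitionCw1omega_forallx} with constant $M$, equivalently $A_{\omega,\co}(f,G,E)\le M$. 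First I would verify that the auxiliary function $g$ in \eqref{eq:formulaextensionC1wsuperreflexive} is well defined (the infimum is finite) and that $g\ge f(z)+G(z)(\cdot-z)$ on $X$ for each $z\in E$, so that $g$ has a continuous affine minorant and $\conv(g)$ makes sense; one also checks $g=f$ on $E$ by taking $y=x\in E$ in the infimum. The heart of the matter is to show $F:=\conv(g)$ is of class $C^{1,\omega}$ with $DF=G$ on $E$ and with the stated quantitative bounds.

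The key steps for the regularity of $F=\conv(g)$ are as follows. Each summand $x\mapsto f(y)+G(y)(x-y)+M\varphi_\omega(\|x-y\|)$ has derivative with modulus of continuity controlled by $C(\alpha)\cdot C\cdot M\cdot\omega$: this is where \eqref{eq:modulussmoothnessalpha} enters, since $\varphi_\omega(\|\cdot\|)$ inherits a uniform-smoothness estimate of type $\omega$ precisely because \eqref{eq:assumptiononOmegaAlpha} lets one dominate $\varphi_\omega$ by $t^{1+\alpha}$ up to the right monotone factor; I expect a lemma (stated earlier, or extracted here) giving that $\varphi_\omega(\|x\|)$ is $C^{1,\omega}$ on $X$ with $\lip_\omega(D(\varphi_\omega\circ\|\cdot\|),X)\le C(\alpha)\cdot C$. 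An infimum of a family of $C^{1,\omega}$ functions with a common modulus need not be $C^1$, so one passes to the convex envelope: the standard mechanism is that $\conv(g)$ at any point $x$ is "touched from below" by a convex combination (or a pointwise limit of such) of the parabola-like summands, and a function that is simultaneously (i) convex, (ii) bounded above near $x$ by a $C^{1,\omega}$ function agreeing with it to first order at $x$, is itself differentiable at $x$ with the same derivative and with the $\omega$-estimate propagating. This yields $A_{\omega,\co}(F,DF,X)\le C(\alpha)\cdot C\cdot M$; the passage to $\lip_\omega(DF,X)\le\frac43 C(\alpha)\cdot C\cdot M$ is then exactly the reverse inequality of Theorem \ref{thm:relationsAseminorm_Lip} applied to the jet $(F,DF)$ on $X$ (where the constant $\tfrac43$ comes from). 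Finally $DF=G$ on $E$: the lower bound $F\ge f(z)+G(z)(\cdot-z)$ together with $F(z)=f(z)$ forces $G(z)$ to be a subgradient of $F$ at $z$, and since $F$ is differentiable this forces $DF(z)=G(z)$.

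For the Lipschitz-preserving statement, let $L=\sup_{z\in E}\|G(z)\|_*<\infty$ and $F_L=\conv_L(g)$. Since each affine minorant $f(z)+G(z)(\cdot-z)$ is $L$-Lipschitz and lies below $g$, the envelope $\conv_L(g)$ is well defined, is convex and $L$-Lipschitz, satisfies $F_L\le F\le g$, hence $F_L(z)=f(z)$ and $DF_L(z)=G(z)$ for $z\in E$ by the same subgradient argument, giving $\lip(F_L)\ge\sup_{z\in E}\|DF_L(z)\|_*=L$; combined with $L$-Lipschitzness this yields \eqref{eq:mainthereomsuperrefl_Lipschitzpresserved}, provided also $\sup_{x\in X}\|DF_L(x)\|_*\le L$, which follows from $L$-Lipschitzness once $F_L$ is shown to be $C^1$. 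The genuinely new work is to show $F_L\in C^{1,\omega}(X)$ with the same bound $A_{\omega,\co}(F_L,DF_L,X)\le C(\alpha)\cdot C\cdot M$: here I would use the explicit formula for $\conv_L(g)$ from Lemma \ref{lem:formulae_convL}, which should express $\conv_L(g)$ either as $\conv(g)$ truncated by $L$-Lipschitz affine functions or as an infimal convolution of $\conv(g)$ with $L\|\cdot\|$; in the second description, infimal convolution with the $1$-homogeneous $L\|\cdot\|$ preserves both convexity and the $C^{1,\omega}$ modulus of the "smooth part" away from where the cone is active, and at active points the function is locally affine, hence trivially $C^{1,\omega}$ there. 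The main obstacle I anticipate is precisely this last point: checking that the infimal convolution (or truncation) does not destroy differentiability along the "seam" where the Lipschitz envelope switches from the parabola-like part to the linear part, and that the $\omega$-modulus of the derivative does not deteriorate across that seam — this requires a careful local analysis using convexity and the fact that both pieces have derivatives in the common dual ball of radius $L$.

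For the last sentence of the theorem, when $\omega$ is increasing and coercive, Lemma \ref{lem:rewriteCW1omega} gives that \eqref{eq:definitionCw1omega_forallx} and \eqref{eq:definitionCw1omega} are literally the same condition (via Fenchel conjugation of $\varphi_\omega$), so every statement above transfers verbatim with $CW^{1,\omega}$ in place of $\widetilde{CW^{1,\omega}}$, with no additional argument needed.
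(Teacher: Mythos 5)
Your overall architecture follows the paper's: necessity via Lemma \ref{lem:necessityofCW1omega}, sufficiency through the smoothness of $\psi_\omega=\varphi_\omega\circ\|\cdot\|$, the auxiliary function $g$, the convex envelope, Theorem \ref{thm:relationsAseminorm_Lip} for the factor $\frac43$, and Lemma \ref{lem:rewriteCW1omega} for the final reformulation. However, two steps are left genuinely open. The statement you merely ``expect'' --- that $\psi_\omega\in C^{1,\omega}(X)$ with $\lip_\omega(D\psi_\omega,X)\le C(\alpha)\cdot C$ --- is precisely where hypothesis \eqref{eq:assumptiononOmegaAlpha} has to do real work, and it does not follow just from ``dominating $\varphi_\omega$ by $t^{1+\alpha}$''; the paper proves it (Lemma \ref{lem:regularityofpsi_w}) by first extracting $\lip_\alpha(D\psi_\alpha,X)\le\left(\frac{1+\alpha}{2\alpha}\right)^\alpha C$ from \eqref{eq:modulussmoothnessalpha} via Theorem \ref{thm:relationsAseminorm_Lip} and then upgrading from the H\"older modulus $t^\alpha$ to $\omega$ along the lines of \cite{AM21}, and some such argument must be supplied. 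Relatedly, your pointwise ``touched from below'' mechanism for the regularity of $\conv(g)$ is delicate in infinite dimensions, where the infimum in the envelope formula need not be attained; the paper instead transfers the uniform smoothness inequality \eqref{eq:unfiformsmoothnessF_inBanach} from $g$ to $\conv(g)$ through the convex-combination formula and then invokes Lemma \ref{lem:convexcombregularity_impliesC1omega}, which is what actually delivers the bound $A_{\omega,\co}(F,DF,X)\le C(\alpha)\cdot C\cdot M$ before Theorem \ref{thm:relationsAseminorm_Lip} is applied.

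The more serious gap is the one you flag yourself: the $C^{1,\omega}$ regularity of $F_L=\conv_L(g)$, which is the genuinely new content of the Lipschitz-preserving statement, is never established, and the route you sketch contains a false step. For the infimal convolution of $\conv(g)$ with $L\|\cdot\|$, the function is in general \emph{not} locally affine where the cone is active: take $g(x)=|x|^2$ in a Hilbert space of dimension at least $2$ and $L=1$; on the active region the inf-convolution equals $|x|-\frac14$, a cone, so the anticipated ``seam'' analysis would have to be redone from scratch and is not carried out. The paper avoids any seam analysis altogether: by Lemma \ref{lem:formulae_convL}, $F_L(x)=\inf\bigl\{L\|x-y\|+\sum_j\lambda_jg(y_j)\bigr\}$ over convex combinations $y=\sum_j\lambda_jy_j$, and since replacing each $y_j$ by $y_j-\lambda h$ (resp. $y_j+(1-\lambda)h$) shifts $y$ by exactly the amount by which $x$ is shifted, the term $L\|x-y\|$ is unchanged and the uniform smoothness inequality satisfied by $g$ (Lemma \ref{lem:properties_g_generaltheorem}) passes verbatim to $F_L$ (Lemma \ref{lem:proofregularity_convL_withinfimumformula}); Lemma \ref{lem:convexcombregularity_impliesC1omega} then gives $F_L\in C^{1,\omega}(X)$ with the same constants, and the sandwich $m\le F_L\le g$ together with differentiability yields $DF_L=G$ on $E$ and \eqref{eq:mainthereomsuperrefl_Lipschitzpresserved}. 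Without this (or an equivalent) argument, the second half of the theorem remains unproved in your proposal.
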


We can improve Theorem \ref{thm:mainsuperreflexive} when $X$ is a Hilbert space with the following theorems. 

\begin{theorem}\label{thm:mainHilbert}
Let $(X, | \cdot|)$ be a Hilbert space and let $\omega$ be a modulus of continuity that is increasing and with $\lim_{t \to \infty} \omega(t)=\infty.$ Let $E \subset X$ and $(f,G) : E \to \R \times X$ be a jet in $E.$ Then there exists $F\in C^{1,\omega}(X)$ convex with $(F, \nabla F) = (f,G)$ on $E$ if and only if $(f,G)$ satisfies the condition \eqref{eq:definitionCw1omega} for some $M>0.$ Moreover, in that case, the formula
\begin{equation}\label{eq:formulaextensionC1wHilbert}
F:= \conv(g), \quad g(x) = \inf_{y\in E} \lbrace f(y) + \langle G(y) , x-y \rangle + M \varphi_\omega(|x-y|) \rbrace, \quad x\in X
\end{equation}
defines such an extension, and
\begin{equation}\label{eq:estimate_extensionoperator_C1wHilbert}
A_{\omega,\co}(F,\nabla F, X) \leq 2  M \quad \text{and} \quad \lip_\omega(\nabla F, X) \leq \frac{8}{3} M.
\end{equation}
Moreover, if in addition $G: E \to X^*$ is bounded and $L:=\sup_{z\in E} |G(z)|$, then the function $F_L:= \conv_L(g)$ has the same properties as those of $F$, with the additional property that $F_L$ is Lipschitz on $X$ with
$$
  \sup_{x\in X} | \nabla F_L(x) | = \lip(F_L) = L = \sup_{z\in E} |G(z)|.
$$

\end{theorem}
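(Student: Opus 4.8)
The plan is to deduce this theorem from Theorem \ref{thm:mainsuperreflexive} together with the special structure of Hilbert space, where the norm $|\cdot|$ already has modulus of smoothness of power type $2$, i.e. \eqref{eq:modulussmoothnessalpha} holds with $\alpha=1$ and $C=1$ (this is the parallelogram identity: $\lambda|x|^2+(1-\lambda)|y|^2-|\lambda x+(1-\lambda)y|^2=\lambda(1-\lambda)|x-y|^2$). The assumption on $\omega$ in Theorem \ref{thm:mainHilbert}, namely that $\omega$ is increasing and coercive, lets us work directly with the intrinsic condition \eqref{eq:definitionCw1omega}; by Lemma \ref{lem:rewriteCW1omega} it coincides with \eqref{eq:definitionCw1omega_forallx}, and by Lemma \ref{lem:necessityofCW1omega} any convex $F\in C^{1,\omega}(X)$ agreeing with $(f,G)$ on $E$ forces \eqref{eq:definitionCw1omega_forallx} to hold with $M=\lip_\omega(\nabla F,X)$, giving the necessity direction for free. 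Note also that the assumption \eqref{eq:assumptiononOmegaAlpha} with $\alpha=1$ (that $t\mapsto t/\omega(t)$ is non-decreasing) is automatic for \emph{any} modulus of continuity, since $\omega$ is concave with $\omega(0)=0$ and hence $\omega(t)/t$ is non-increasing; so the hypotheses of Theorem \ref{thm:mainsuperreflexive} are met with no extra conditions on $\omega$.

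For the sufficiency and the formulas: invoking Theorem \ref{thm:mainsuperreflexive} with $\alpha=1$, $C=1$ already shows that $F=\conv(g)$ with $g$ as in \eqref{eq:formulaextensionC1wHilbert} is a convex $C^{1,\omega}(X)$ extension of the jet, and that $\conv_L(g)$ has the sharp Lipschitz constant; the only thing that does \emph{not} come from Theorem \ref{thm:mainsuperreflexive} verbatim is the improved numerical constants in \eqref{eq:estimate_extensionoperator_C1wHilbert}, namely $A_{\omega,\co}(F,\nabla F,X)\le 2M$ and $\lip_\omega(\nabla F,X)\le \tfrac{8}{3}M$, which are better than the abstract $C(1)\cdot 1\cdot M$ coming from the superreflexive argument. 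So the real work is a sharpened, Hilbert-specific version of the estimate in the proof of Theorem \ref{thm:mainsuperreflexive}. The plan is: (i) show $g$ is well defined, finite, and has an affine (resp.\ $L$-Lipschitz affine) minorant, so $\conv(g)$ and $\conv_L(g)$ make sense; (ii) show $g\ge f$ on $E$ with $g(y)=f(y)$, hence $F=\conv(g)$ satisfies $F\le f$ on $E$ and, being squeezed below the parabolas $f(y)+\langle G(y),\cdot-y\rangle+M\varphi_\omega(|\cdot-y|)$, one gets $F(y)=f(y)$ and $\nabla F(y)=G(y)$ on $E$ by a standard sandwich/tangency argument; (iii) prove $F\in C^{1,\omega}$ with the stated bound. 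For step (iii), the key computation is that each parabola-type function $P_y(x):=f(y)+\langle G(y),x-y\rangle+M\varphi_\omega(|x-y|)$ has, in Hilbert space, gradient $\nabla P_y(x)=G(y)+M\,\omega(|x-y|)\,\tfrac{x-y}{|x-y|}$ which is $\omega$-continuous with a \emph{sharp} constant, using $|u/|u|-v/|v||\le 2|u-v|/\max(|u|,|v|)$ and monotonicity of $\omega$; this is where the factor $2$ in $A_{\omega,\co}\le 2M$ originates, and the $\tfrac{8}{3}=\tfrac{4}{3}\cdot 2$ comes from combining it with the reverse inequality $\lip_\omega(\nabla F,X)\le \tfrac{4}{3}A_{\omega,\co}(F,\nabla F,X)$ of Theorem \ref{thm:relationsAseminorm_Lip}. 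Then one shows that taking the infimum over $y\in E$ and then the convex envelope does not increase these seminorms — this uses that $\conv(g)$ can be computed as a supremum of affine functions lying below $g$, each of which is below some $P_y$, plus a lemma (from the formulae section) that the $A_{\omega,\co}$-constant and the $\lip_\omega$-seminorm of a convex envelope of a family are controlled by the supremum over the family.

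For the last assertion about $F_L=\conv_L(g)$: by Lemma \ref{lem:formulae_convL} there are explicit formulas for $\conv_L(g)$; using that $g$ is itself an infimum of the functions $P_y$, each of which has $\||\nabla P_y\||_\infty$-type growth bounded by $\|G(y)\|_*+M\omega(\mathrm{dist})\le L+\ldots$ near $E$, one shows $\conv_L(g)=\conv_L(g)$ still agrees with the jet on $E$ (the Lipschitz truncation does not bite at points of $E$ because $|G(y)|\le L$ there) and retains the $C^{1,\omega}$ bound, while globally $\lip(F_L)\le L$ by construction and $\lip(F_L)\ge \sup_E|\nabla F_L|=\sup_E|G|=L$; hence equality, and $\sup_X|\nabla F_L|=L$ follows since $\sup_X|\nabla F_L|\le\lip(F_L)\le L\le\sup_E|\nabla F_L|$. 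The main obstacle is step (iii) in the Hilbert-specific form: getting the \emph{optimal} constant $2$ (rather than a worse one) in the $\omega$-modulus estimate for the gradient of a convex envelope of shifted parabolas, and checking carefully that passing from the pointwise infimum $g$ to $\conv(g)$, and then to the $L$-Lipschitz envelope, preserves both the interpolation conditions on $E$ and the seminorm bounds — these are the places where one must use the Hilbert geometry (parallelogram law, the elementary bound on $u\mapsto u/|u|$, and smoothness of $\varphi_\omega(|\cdot|)$ away from the origin) rather than the softer superreflexive renorming argument.
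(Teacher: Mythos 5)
Your skeleton (necessity via Lemma \ref{lem:necessityofCW1omega}, the equivalence of \eqref{eq:definitionCw1omega} and \eqref{eq:definitionCw1omega_forallx} via Lemma \ref{lem:rewriteCW1omega}, the sandwich argument $m\le F\le g$ with equality on $E$ to get $(F,\nabla F)=(f,G)$, and the reduction of $\lip_\omega(\nabla F,X)\le\frac83 M$ to $A_{\omega,\co}(F,\nabla F,X)\le 2M$ through Theorem \ref{thm:relationsAseminorm_Lip}) matches the paper. But there is a genuine gap exactly where you locate ``the real work'': your mechanism for the constant $2$. First, the elementary route you propose for the parabolas $P_y(x)=f(y)+\langle G(y),x-y\rangle+M\varphi_\omega(|x-y|)$ does not give $2$: splitting $\omega(|u|)\tfrac{u}{|u|}-\omega(|v|)\tfrac{v}{|v|}$ into a radial and an angular part and using $\bigl|\tfrac{u}{|u|}-\tfrac{v}{|v|}\bigr|\le \tfrac{2|u-v|}{\max(|u|,|v|)}$ together with monotonicity/concavity of $\omega$ yields $\omega(|u-v|)+2\omega(|u-v|)=3\omega(|u-v|)$, i.e.\ $\lip_\omega(\nabla P_y,X)\le 3M$, so the factor $2$ does not ``originate'' there. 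Second, and more seriously, even a pointwise bound $\lip_\omega(\nabla P_y,X)\le 2M$ would not transfer to $F=\conv(g)$ by the argument you invoke: the quantity that passes through $\inf_{y\in E}$, through $\conv$, and through $\conv_L$ with no loss is not $A_{\omega,\co}$ or $\lip_\omega$ but the two-point uniform-smoothness inequality
\begin{equation*}
\lambda\,\psi_\omega(z+(1-\lambda)h)+(1-\lambda)\,\psi_\omega(z-\lambda h)-\psi_\omega(z)\;\le\;2\,\lambda(1-\lambda)\,\varphi_\omega(|h|),\qquad \psi_\omega=\varphi_\omega(|\cdot|),
\end{equation*}
which is the content of Lemma \ref{lem:regularity_psiomega_Hilbert} (a nontrivial fact imported from \cite{AM21}/\cite{VNC78}, not an elementary computation). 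The paper's proof runs: this inequality for $\psi_\omega$ gives the same inequality for $g$ (Lemma \ref{lem:properties_g_generaltheorem}), then for $\conv(g)$ and $\conv_L(g)$ via the convex-combination formulas (Lemma \ref{lem:regularity_convg_general}, Lemma \ref{lem:proofregularity_convL_withinfimumformula}), and finally Lemma \ref{lem:convexcombregularity_impliesC1omega} converts it into $A_{\omega,\co}(F,\nabla F,X)\le 2M$ and $\lip_\omega(\nabla F,X)\le\frac43\cdot 2M$; Theorem \ref{thm:mainHilbert} is then an instance of the general Theorem \ref{thm:technicalgeneraltheorem} with $K=2$. The ``lemma from the formulae section'' you appeal to (Lemma \ref{lem:proofregularity_convL_withinfimumformula}) controls precisely this second-difference inequality, not the $A_{\omega,\co}$-constant or the gradient modulus of a family. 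Moreover, one cannot recover the two-point inequality with constant $2$ from a gradient-modulus bound on the $P_y$'s: Taylor's theorem gives $A_{\omega,\co}(P_y,\nabla P_y,X)\le\lip_\omega(\nabla P_y,X)$, but turning that into the $\lambda(1-\lambda)$-form costs an extra factor because for a general modulus only $\varphi_\omega(\mu t)\le\mu\,\varphi_\omega(t)$ ($0\le\mu\le1$) holds, not $\mu^2$; so your route would end with a strictly worse constant than $2M$, and hence also worse than $\frac83 M$.

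Aside from this, two smaller points: invoking Theorem \ref{thm:mainsuperreflexive} with $\alpha=1$, $C=1$ for the mere existence of an extension is fine (and your observation that \eqref{eq:assumptiononOmegaAlpha} is automatic for $\alpha=1$ is correct), but it is logically redundant once the sharpened Hilbert argument is in place; and for the $\conv_L(g)$ statement the clean argument is the paper's: $m(x)=\sup_{z\in E}\{f(z)+\langle G(z),x-z\rangle\}$ is convex and $L$-Lipschitz, so $m\le F_L\le g$ with equality on $E$, which yields $\nabla F_L=G$ on $E$ by the same squeeze, while $\lip(F_L)\le L$ by definition and $\lip(F_L)\ge\sup_E|G|=L$; your ``truncation does not bite on $E$'' heuristic is this sandwich, but it needs $m$ (not just local gradient bounds near $E$) to be exhibited as an $L$-Lipschitz convex competitor.
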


For functions with H\"older derivatives, Theorem \ref{thm:mainHilbert} admits the following improvement. 

\begin{theorem}\label{thm:mainHilbertHoldercase}
Let $0 < \alpha \leq 1.$ Let $X$ be a Hilbert space, let $E \subset X$, and $(f,G) : E \to \R \times X$ be a jet in $E.$ Then there exists $F\in C^{1,\alpha}(X)$ convex with $(F, \nabla F) = (f,G)$ on $E$ if and only if $(f,G)$ satisfies the condition
\begin{equation}\label{eq:definitionCW1alpha}
f(y) \geq f(z) +  \langle G(z) , y-z \rangle + \frac{\alpha}{(1+\alpha) M^{1/\alpha}} |G(y)-G(z)|^{1 + \frac{1}{\alpha}}, \quad \text{for all} \quad y,z \in E, \tag{$CW^{1,\alpha}$}
\end{equation}
for some $M>0.$ 
Moreover, in that case, the formula
\begin{equation}\label{eq:formulaextension_Hilbert_Holder}
F:= \conv(g), \quad g(x) = \inf_{y\in E} \lbrace f(y) + \langle G(y) , x-y \rangle + \frac{M}{1+\alpha}|x-y|^{1+\alpha} \rbrace, \quad x\in X
\end{equation}
defines such an extension, and
\begin{equation}\label{eq:estimate_extensionoperator_C1wHilbert_Holdercase}
A_{\alpha,\co}(F,\nabla F, X) \leq 2^{1-\alpha}  M \quad \text{and} \quad \lip_\alpha(\nabla F, X) \leq 2^{1-\alpha} \left(\frac{1 + \alpha}{2\alpha}\right)^\alpha M.
\end{equation}
Furthermore, if in addition $G: E \to X^*$ is bounded and $L:=\sup_{z\in E}  |G(z) | $, then the function $F_L:= \conv_L(g)$ has the same properties as those of $F$, with the additional property that $F_L$ is Lipschitz on $X$ with
$$
  \sup_{x\in X} | \nabla F_L(x) | = \lip(F_L) = L = \sup_{z\in E} |G(z)|.
$$
\end{theorem}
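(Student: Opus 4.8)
\textbf{Proof plan for Theorem \ref{thm:mainHilbertHoldercase}.}

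The plan is to deduce this from Theorem \ref{thm:mainHilbert} by specializing to the H\"older modulus $\omega(t)=t^\alpha$ and then carefully optimizing the numerical constants, which are much better in the pure power case than the generic estimates $2M$ and $\tfrac83 M$ allow. First I would check the equivalence of the conditions: for $\omega(t)=t^\alpha$ one has $\varphi_\omega(t)=\tfrac{1}{1+\alpha}t^{1+\alpha}$, and a direct computation of the Fenchel conjugate gives $(\varphi_\omega)^*(s)=\tfrac{\alpha}{1+\alpha}s^{1+1/\alpha}$, so that the term $M(\varphi_\omega)^*\!\bigl(\tfrac1M\|G(y)-G(z)\|_*\bigr)$ in \eqref{eq:definitionCw1omega} becomes exactly $\tfrac{\alpha}{(1+\alpha)M^{1/\alpha}}|G(y)-G(z)|^{1+1/\alpha}$. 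Thus \eqref{eq:definitionCW1alpha} with constant $M$ is literally \eqref{eq:definitionCw1omega} with the same constant, and the "if and only if" statement together with the validity of the formula \eqref{eq:formulaextension_Hilbert_Holder} (which is \eqref{eq:formulaextensionC1wHilbert} with this $\varphi_\omega$) follows immediately from Theorem \ref{thm:mainHilbert}. Similarly the final Lipschitz-envelope statement about $F_L=\conv_L(g)$ transfers verbatim.

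The substantive part is the sharpened estimates \eqref{eq:estimate_extensionoperator_C1wHilbert_Holdercase}. Here I would not merely quote the constants $2M$ and $\tfrac83 M$ from Theorem \ref{thm:mainHilbert} but instead revisit the proof of that theorem in the H\"older case. The factor $\tfrac43$ in Theorem \ref{thm:mainsuperreflexive} and the $2$ in Theorem \ref{thm:mainHilbert} presumably arise from (i) passing from the infimal-convolution function $g$ to its convex envelope $F=\conv(g)$, and (ii) converting the $A_{\omega,\co}$-bound into a $\lip_\omega(\nabla F,\cdot)$-bound via Theorem \ref{thm:relationsAseminorm_Lip}. For step (i) with $\varphi_\omega(t)=\tfrac{1}{1+\alpha}t^{1+\alpha}$, the key point is a convexity-type inequality for the function $t\mapsto t^{1+\alpha}$: the relevant estimate in the Hilbert case uses the parallelogram-law-style bound, and for the power function the sharp constant in the relevant midpoint/secant inequality contributes the factor $2^{1-\alpha}$ rather than $2$. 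For step (ii), Theorem \ref{thm:relationsAseminorm_Lip} applied to the modulus $\omega(t)=t^\alpha$ should give precisely the multiplicative factor $\bigl(\tfrac{1+\alpha}{2\alpha}\bigr)^\alpha$, which collapses to $1$ as $\alpha\to1^-$, explaining the claimed asymptotic sharpness. I would verify both of these by plugging $\omega(t)=t^\alpha$ into whatever inequality the proof of Theorem \ref{thm:mainHilbert} uses, keeping track of constants rather than bounding them by absolute numbers.

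Concretely the steps would be: (1) compute $\varphi_\omega$ and $(\varphi_\omega)^*$ for $\omega(t)=t^\alpha$ and identify \eqref{eq:definitionCW1alpha} with \eqref{eq:definitionCw1omega}; (2) invoke Theorem \ref{thm:mainHilbert} to get existence, the extension formula, and the envelope statement; (3) reprove the $A_{\alpha,\co}$-bound for $F=\conv(g)$ tracking the exact power-function constant, obtaining $2^{1-\alpha}M$; (4) apply Theorem \ref{thm:relationsAseminorm_Lip} with $\omega(t)=t^\alpha$ to upgrade this to $\lip_\alpha(\nabla F,X)\le 2^{1-\alpha}\bigl(\tfrac{1+\alpha}{2\alpha}\bigr)^\alpha M$; (5) note the Lipschitz-envelope conclusions are inherited unchanged from Theorem \ref{thm:mainHilbert}. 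The main obstacle is step (3): squeezing the sharp constant $2^{1-\alpha}$ out of the envelope construction requires the right elementary inequality for $|u|^{1+\alpha}$ in inner-product spaces (something like $|u|^{1+\alpha}+|v|^{1+\alpha}\le 2^{1-\alpha}\bigl(|u|^2+|v|^2\bigr)^{(1+\alpha)/2}$ combined with the parallelogram law), and verifying that this inequality is exactly what the proof of Theorem \ref{thm:mainHilbert} uses, applied to the increments of $G$ and the displacement vectors; everything else is bookkeeping and an appeal to already-established results.
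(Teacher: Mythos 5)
Your plan is correct and follows essentially the same route as the paper: the paper proves Theorem \ref{thm:mainHilbertHoldercase} by feeding the H\"older case of Lemma \ref{lem:regularity_psiomega_Hilbert} (which gives the constant $K=2^{1-\alpha}$ for $\psi_\alpha$) into the general Theorem \ref{thm:technicalgeneraltheorem}, identifying \eqref{eq:definitionCW1alpha} with \eqref{eq:definitionCw1omega} via the same Fenchel-conjugate computation, and obtaining the factor $\left(\frac{1+\alpha}{2\alpha}\right)^\alpha$ from the H\"older case of Theorem \ref{thm:relationsAseminorm_Lip} (through Lemma \ref{lem:convexcombregularity_impliesC1omega}). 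The only real difference is that you propose to re-derive the sharp smoothness inequality for $|\cdot|^{1+\alpha}$ by hand, whereas the paper simply quotes the second part of Lemma \ref{lem:regularity_psiomega_Hilbert}; your sketched inequality combined with the parallelogram law does handle the midpoint case, but the general-$\lambda$ inequality with the $\lambda(1-\lambda)$ factor is exactly what that lemma supplies, so you can (and should) cite it rather than reprove it.
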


\begin{remark}\label{rem:takingthesmallestM}
{\em 
Theorems \ref{thm:mainsuperreflexive}, \ref{thm:mainHilbert} and \ref{thm:mainHilbertHoldercase} can be applied with the constant $M=A_{\omega, \co}(f,G,E),$ thus obtaining convex extensions $F \in C^{1,\omega}(X)$ of jets $(f,G) :E \to E \times X^*$ with the estimates
$$
A_{\omega, \co}(F,DF,X) \leq C(\alpha)   \cdot C \cdot A_{\omega,\co}(f,G,E) \quad \text{and} \quad \lip_\omega(DF,X) \leq \frac{4}{3} \cdot C(\alpha) \cdot C \cdot A_{\omega, \co}(f,G,E);
$$
$$
A_{\omega,\co}(F,\nabla F, X) \leq 2  A_{\omega,\co}(f,G,E) \quad \text{and} \quad \lip_\omega(\nabla F, X) \leq \frac{8}{3} A_{\omega,\co}(f,G,E);
$$
$$
A_{\omega,\co}(F,\nabla F, X) \leq 2^{1-\alpha}  A_{\alpha,\co}(f,G,E) \quad \text{and} \quad  \lip_\alpha(\nabla F, X) \leq 2^{1-\alpha} \left(\frac{1 + \alpha}{2\alpha}\right)^\alpha A_{\omega,\co}(f,G,E)
$$
respectively.  
}    
\end{remark}

As we mentioned earlier, in the case where $G$ is bounded, the parts of Theorems \ref{thm:mainsuperreflexive}, \ref{thm:mainHilbert}, \ref{thm:mainHilbertHoldercase} concerning the sharp Lipschitz constant are new even when $X=  \R^n$. Note that here we use a new extension formula via the \textit{$L$-Lipschitz and convex envelope} $F_L=\conv_L(g)$ in \eqref{eq:formulaF_LinTheoremsuperreflexive}; for which we prove more explicit formulas in Lemma \ref{lem:formulae_convL}. 
The preservation of possible Lipschitz constants for $C^{1,\omega}$ convex extensions was not considered in \cite{ALeGM18}. 
\\
In the case of arbitrary (possibly unbounded) $G$, we are using the same type of extension formulas \eqref{eq:formulaextensionC1wsuperreflexive}, \eqref{eq:formulaextensionC1wHilbert} as in \cite[Theorem 4.11]{ALeGM18}. But now Theorem \ref{thm:mainsuperreflexive} is valid for all modulus $\omega$ satisfying \eqref{eq:assumptiononOmegaAlpha} with respect to $\alpha$, thus generalising \cite[Theorem 5.5]{ALeGM18}, which was valid only for the class $C^{1,\alpha}$ itself. Condition \eqref{eq:assumptiononOmegaAlpha} includes the cases $\omega(t) = t^\beta,$ $0 <\beta \leq \alpha.$
\\
Also, Theorems \ref{thm:mainHilbert} and \ref{thm:mainHilbertHoldercase} have much better constants \eqref{eq:estimate_extensionoperator_C1wHilbert}, \eqref{eq:estimate_extensionoperator_C1wHilbert_Holdercase} than their antecedent in \cite[Theorem 4.11]{ALeGM18}. In fact, Theorem \ref{thm:mainHilbertHoldercase} implies \cite[Theorem 2.4]{ALeGM18}, as the bounds \eqref{eq:estimate_extensionoperator_C1wHilbert_Holdercase} equal $1$ when $\alpha=1.$ 
\\
We will prove Theorems \ref{thm:mainsuperreflexive}, \ref{thm:mainHilbert}, \ref{thm:mainHilbertHoldercase} by first proving a more general result in Banach spaces for $C^{1,\omega}$-smoothness; see Theorem \ref{thm:technicalgeneraltheorem}.

\medskip

We have also obtained an optimal result for convex extensions of order $C^1(X).$ In a Banach space $(X , \| \cdot\|)$, let $E \subset X$ and $(f,G): E \to \R \times X^*$ a $1$-jet. We define the following two conditions for $(f,G)$ on $E:$ 
\begin{equation}\label{eq:condition(C)}
f(y) \geq f(z) + G(z)(y-z) , \quad \text{for all} \quad y,z\in E. \tag{$C$}
\end{equation}
\begin{equation}\label{eq:condition(CW1)}
f(y) = f(z) + G(z)(y-z) \: \text{ implies } \: G(y)=G(z), \quad y,z\in E. \tag{$CW^1$}
\end{equation}
These conditions were shown to characterise the extendability of a jet $(f,G)$ in a compact set $E$ by $C^1$ and convex extensions when $X= \R^n$ \cite{AM17} or when $X$ is a Hilbert space \cite{AM20}.

\smallskip 

In a superreflexive space $X,$ we have obtained the following result.

\begin{theorem}\label{thm:C1optimal}
Let $X$ be a superreflexive Banach space, let $E \subset X$ be compact and let $(f,G): E \to \R \times X^*$ be a $1$-jet in $E$ with $G$ continuous. Then there exists a convex function $F\in C^1(X)$ with $(F,DF)=(f,G)$ on $E$ if and only if $(f,G)$ satisfies the conditions \eqref{eq:condition(C)} and \eqref{eq:condition(CW1)}.

Moreover, $F$ can be taken to be Lipschitz on $X$, with
$$
\lip(F) = \sup_{x\in X} \|DF(x)\|_* = \sup_{z\in E} \|G(z)\|_*.
$$
\end{theorem}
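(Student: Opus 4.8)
The plan is to reduce the $C^1$ case to the $C^{1,\omega}$ machinery already in hand (Theorem \ref{thm:mainsuperreflexive}), by approximating the jet with a sequence of jets that do satisfy the stronger condition \eqref{eq:definitionCw1omega_forallx}, and then passing to a limit. First I would establish necessity: if $F\in C^1(X)$ is convex and agrees with $(f,G)$ on $E$, then \eqref{eq:condition(C)} is immediate from the subgradient inequality for convex functions, while \eqref{eq:condition(CW1)} follows because equality $f(y)=f(z)+G(z)(y-z)$ forces the segment $[z,y]$ to lie on an affine piece of $F$, on which $DF$ is constant; hence $G(y)=DF(y)=DF(z)=G(z)$. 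The estimate $\lip(F)\le \sup_E\|G\|_*$ will of course only be claimed for the extension we construct, not for an arbitrary one.

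For sufficiency, the key step is a quantitative lemma producing, for each $\varepsilon>0$, a perturbed jet $(f_\varepsilon,G)$ on $E$ (keeping the same $G$, or a slight modification) that satisfies \eqref{eq:definitionCw1omega_forallx} for some modulus $\omega$ (dominated by $t^\alpha$, where $\alpha$ comes from Pisier renorming of $X$) with some finite constant $M_\varepsilon$, while $\|f_\varepsilon-f\|_{L^\infty(E)}\to 0$. Here compactness of $E$ and continuity of $G$ are essential: conditions \eqref{eq:condition(C)} and \eqref{eq:condition(CW1)} together with uniform continuity of $G$ on $E$ give a modulus $\eta$ controlling how fast $f(z)+G(z)(y-z)-f(y)$ can approach $0$ relative to $\|y-z\|$; one converts this into a genuine modulus-of-continuity estimate, possibly after replacing $\omega$ by a smaller admissible modulus. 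This is the standard device used in \cite{AM17,AM20}, and I expect the main obstacle to lie precisely here — in extracting, from the purely qualitative conditions \eqref{eq:condition(C)} and \eqref{eq:condition(CW1)}, a uniform superlinear lower bound of the form $f(z)+G(z)(y-z)-f(y)\le -\theta(\|y-z\|)$ on $E$, with $\theta$ a convex-type modulus — together with checking that the chosen $\omega$ can be taken dominated by $t^\alpha$ so that Theorem \ref{thm:mainsuperreflexive} applies.

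Then, for each $\varepsilon$, apply Theorem \ref{thm:mainsuperreflexive} (with the Lipschitz variant $F_L = \conv_L(g)$, using $L=\sup_{z\in E}\|G(z)\|_*<\infty$ by compactness and continuity) to obtain a convex $F_\varepsilon\in C^{1,\omega}(X)$ with $(F_\varepsilon,DF_\varepsilon)=(f_\varepsilon,G)$ on $E$, $\lip(F_\varepsilon)=L$, and $\sup_X\|DF_\varepsilon\|_*=L$. The family $\{F_\varepsilon\}$ is then uniformly Lipschitz; a diagonal/subsequence argument (using local uniform convergence of Lipschitz convex functions, which for convex functions also yields convergence of derivatives at points of differentiability of the limit) produces a convex limit $F$ with $\lip(F)\le L$, $F=f$ on $E$, and — via convexity plus the sandwiching by the $g$'s and the lower parabola-type supports — $DF=G$ on $E$ and $F\in C^1(X)$. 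To nail down $C^1$ smoothness at points of $E$ and the identity $DF=G$ there, I would instead keep $F := \conv_L(\tilde g)$ for a single limiting minorant $\tilde g(x)=\inf_{y\in E}\{f(y)+G(y)(x-y)\}$ and argue directly that this convex, $L$-Lipschitz function is differentiable with the right derivative, using condition \eqref{eq:condition(CW1)} exactly as in \cite{AM20} to rule out corners along $E$; the superreflexive smooth renorming is what upgrades differentiability off $E$ from the approximating parabolas. Finally, $\sup_X\|DF\|_* = \lip(F) = L = \sup_{z\in E}\|G(z)\|_*$ follows since $\|DF(z)\|_*=\|G(z)\|_*$ for $z\in E$ gives $\lip(F)\ge L$, and the construction gives $\lip(F)\le L$.
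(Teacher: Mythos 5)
Your necessity argument is fine and is essentially the paper's. The sufficiency part, however, has genuine gaps, and the main one is the step you yourself flag as the obstacle. The quantitative bound you aim for, a uniform superlinear estimate of the form $f(z)+G(z)(y-z)-f(y)\leq -\theta(\|y-z\|)$ on $E$, is not only unproved but false in general: if the jet is affine on part of $E$ (constant $G$, which is perfectly compatible with \eqref{eq:condition(C)} and \eqref{eq:condition(CW1)}), the left-hand side vanishes for $y\neq z$, so no strictly positive $\theta$ can exist. The correct quantity to control is not the deficit against $\|y-z\|$ but the three-point expression in \eqref{eq:definitionCw1omega_forallx} normalized by $\|x-y\|$ (equivalently, the deficit against $\|G(y)-G(z)\|_*$ as in \eqref{eq:definitionCw1omega}). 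The paper's route is to set $\delta(t)=\sup\{\,[f(z)+G(z)(x-z)-f(y)-G(y)(x-y)]/\|x-y\|:\ 0<\|x-y\|\leq t,\ x\in X,\ y,z\in E\,\}$, prove via compactness of $E$, continuity of $G$, and \emph{both} \eqref{eq:condition(C)} and \eqref{eq:condition(CW1)} that $\delta(0^+)=0$ and $\delta\leq 2L$, upgrade $\delta$ to a concave modulus $\Delta\leq 2L$, and then take $\omega=\Delta^{\alpha}$ so that \eqref{eq:assumptiononOmegaAlpha} holds; the \emph{original} jet then satisfies \eqref{eq:definitionCw1omega_forallx} with $M=2(2L)^{1-\alpha}$, and a single application of Theorem \ref{thm:mainsuperreflexive} (with the $\conv_L$ variant) finishes the proof. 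No perturbed jets $f_\varepsilon$ and no limiting procedure are needed.

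Your limiting scheme also does not close on its own terms: the constants $M_\varepsilon$ (and possibly the moduli) are not uniformly controlled, and a locally uniform limit of uniformly Lipschitz convex $C^1$ functions need not be $C^1$ (think $\sqrt{t^2+\varepsilon^2}\to|t|$); convergence of derivatives at points where the limit happens to be differentiable does not rescue differentiability itself. Your fallback, taking $F=\conv_L(\tilde g)$ with $\tilde g(x)=\inf_{y\in E}\{f(y)+G(y)(x-y)\}$, is also problematic: that infimum of affine functions is concave, not the paper's convex minorant $m(x)=\sup_{z\in E}\{f(z)+G(z)(x-z)\}$, and arguing differentiability ``as in \cite{AM20}'' is a Hilbert-specific argument; the whole point of the paper's proof is to avoid redoing it in superreflexive spaces by reducing to the $C^{1,\omega}$ theorem through the constructed modulus. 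So the missing ingredient is precisely the construction of $\omega$ from $\delta$ and the verification that the given jet already satisfies \eqref{eq:definitionCw1omega_forallx} for it.
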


The first part of Theorem \ref{thm:C1optimal} extends \cite[Theorem 1]{AM20} from Hilbert to superreflexive Banach space. And, again, the result about the sharp Lipschitz constant is new even when $X= \R^n,$ since the best bounds obtained so far were, by different means, $\lip(F) \leq 5  \sup_{z\in E} |G(z)|$ (see \cite{AM20}) or $\lip (F) \leq C(n)\sup_{z\in E} |G(z)|$, where $C(n)$ is a dimensional constant (see \cite{AM17}). 
\\
It is also worth comparing Theorem \ref{thm:C1optimal} with the corresponding results for $C^1$ general (not necessarily convex) extensions in terms of the Lipschitz properties. The best bounds obtained so far \cite{JSSG13} for the Lipschitz constants of an $F \in C^1(X)$ extending $(f,G)$ from a subset $E$ (in the case where $G$ is bounded), are of the form 
$$
\lip(F_\varepsilon,X) \leq  C_0 \cdot \max\lbrace \lip(f,E), L \rbrace + \varepsilon, \quad \text{where} \quad L =  \sup_{z\in E} \|G(z)\|_*,
$$
for arbitrarily small (but fixed) $\varepsilon>0,$ and $C_0 \geq 1$ a constant depending on the $X$. When $X$ is a Hilbert space, the constant $C_0$ can taken equal to $1.$ Remarkably, Theorem \ref{thm:C1optimal} for convex functions provides extensions without increasing the \textit{natural} Lipschitz constant of the jet.

\smallskip

Note that in our case, $L:=\sup_{z\in E} \|G(z)\|_*$ is the right magnitude to gauge the \textit{Lipschitz constant} of a jet $(f,G)$ admitting a convex extension, because the convexity condition \eqref{eq:condition(C)} implies that
$$
f(z)-f(y) \leq G(z)(y-z) \leq L \|y-z\|, \quad \text{and} \quad f(y)-f(z) \leq G(y)(z-y) \leq L \|z-y\|, \quad y,z\in E, 
$$
and so $\lip(f,E) \leq L.$

\medskip

The structure of the paper is as follows. In Section \ref{sect:preliminaries}, we collect some basic properties of Fenchel conjugates and moduli of continuity, as well as some useful inequalities concerning the smoothness of Hilbertian norms. In Section \ref{sect:relationsbetweennorms} we establish several relationships between the constants $\lip_\omega$ and $A_{\omega, \co}$ for restricted jets and globally defined convex functions, and we also prove the equivalences between \eqref{eq:definitionCw1omega} and \eqref{eq:definitionCw1omega_forallx}. It is precisely these estimates that allow us to obtain precise bounds for the seminorms of the extended functions in Theorems \ref{thm:mainsuperreflexive}, \ref{thm:mainHilbert}, and \ref{thm:mainHilbertHoldercase}. Then, in Section \ref{sect:formulae_convexenvelopes} we prove various formulae for the Lipschitz and convex envelopes $\conv_L$, based on which we establish the pertinent $C^{1,\omega}$-regularity results. We will prove a key technical Theorem \ref{thm:technicalgeneraltheorem} in Section \ref{sect:generalresult_prooffirstmaintheorems}, which will be used to derive Theorems \ref{thm:mainsuperreflexive}, \ref{thm:mainHilbert}, and \ref{thm:mainHilbertHoldercase} in the same section. Finally, in Section \ref{sect:C1case}, we give the proof of Theorem \ref{thm:C1optimal}.

\section{Preliminaries and tools from convex analysis}\label{sect:preliminaries}
In this section we recall some basic results from convex analysis, as well as certain inequalities concerning modulus of continuity $\omega$ and their integral function $\varphi_\omega$.

For a function $\delta : [0, \infty) \to [0, \infty),$ the \textit{Fenchel conjugate} of $\delta$ is defined by
$$
\delta^*(t) = \sup\lbrace ts-\delta(s) \, : \, s\in \R \rbrace, \quad s\in \R.
$$

We will mostly consider the Fenchel conjugate of nonnegative functions only defined on $[0,+\infty),$ say $\delta: [0,+\infty) \to [0,+\infty)$, slightly abusing of terminology. In such case, we will always assume that all the functions involved are extended to all of $\R$ by setting $\delta(t)= \delta(-t)$ for $t<0$. Hence $\delta$ will be an even function on $\R$ so that its conjugate $\delta^*$ satisfies
$$
\delta^*(t)= \sup_{ s\in \R} \lbrace ts-\delta(s) \rbrace =  \sup_{ s \geq 0} \lbrace ts-\delta(s) \rbrace ,\quad \text{for} \quad t \geq 0.
$$

We refer to \cite[Section 3.2]{BV10} and \cite[Section 3.2]{Z02} for a detailed presentation of Fenchel conjugates and their great importance in convex analysis. 

\smallskip

We will make use of the following properties and estimates for functions associated with a modulus $\omega,$ i.e., a concave non-decreasing function on $[0,\infty)$ with $\omega(0)=\omega(0^+)=0$.

\begin{proposition}\label{prop:inequalities_omega_varphiomega}
Let $\omega:[0, \infty)\to [0,\infty)$ be a modulus of continuity, and let $\varphi_\omega(t)=\int_{0}^{t}w(s) \ud s$. Then:
\begin{enumerate}
\item $\omega( \lambda t)\leq \lambda \omega(t)$ for all $\lambda \geq 1, t\geq 0$, and the function $t \mapsto \frac{t}{\omega(t)}$ is non-decreasing.  
\item $(t/2) \omega(t) \leq \varphi_\omega(t) \leq t\omega(t/2)$ for all $t\geq 0$;
\end{enumerate}
If, in addition, $\omega$ is increasing and $\lim_{t \to \infty} \omega(t) = \infty,$ then $\omega^{-1}$ and $(\varphi_\omega)^*$ are well defined and
\begin{enumerate}
\item[$(3)$]  $(\varphi_\omega)^*(t)= \int_0^t \omega^{-1}(s) \ud s$ for all $t\geq 0$;
\item[$(4)$]  $(\varphi_\omega)(t)+(\varphi_\omega)^*(s)=ts$ if and only if $s= \omega(t);$ 
\item[$(5)$] $t\omega^{-1}(t/2)\leq (\varphi_\omega)^{*}(t)\leq (t/2)\omega^{-1}(t)$ for all $t\geq 0$.
 \end{enumerate}
\end{proposition}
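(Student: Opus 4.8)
\textbf{Plan of proof for Proposition \ref{prop:inequalities_omega_varphiomega}.}

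The statement is a collection of five elementary facts, so the plan is to dispatch them one at a time using concavity of $\omega$ and the definition of $\varphi_\omega$. For (1), the inequality $\omega(\lambda t)\le\lambda\omega(t)$ for $\lambda\ge 1$ follows from concavity together with $\omega(0)=0$: writing $t=\tfrac1\lambda(\lambda t)+(1-\tfrac1\lambda)\cdot 0$ gives $\omega(t)\ge\tfrac1\lambda\omega(\lambda t)$. Monotonicity of $t\mapsto t/\omega(t)$ is the same statement rephrased: for $0<t\le s$, take $\lambda=s/t\ge 1$ to get $\omega(s)\le(s/t)\omega(t)$, i.e. $t/\omega(t)\le s/\omega(s)$ (one should note $\omega(t)>0$ for $t>0$ since $\omega$ is non-decreasing with $\omega(0^+)=0$ but not identically zero near $0$; if $\omega\equiv 0$ the statements are trivial or vacuous). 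For (2), both bounds come from monotonicity of $\omega$ applied inside the integral $\varphi_\omega(t)=\int_0^t\omega(s)\,\mathrm ds$: on $[0,t]$ we have $\omega(s)\le\omega(t)$, but to get the sharper lower bound $(t/2)\omega(t)$ I would instead split at $t/2$ and use $\omega(s)\ge\omega(t/2)$ on $[t/2,t]$ together with... actually the clean way is concavity: the chord from $(0,0)$ to $(t,\omega(t))$ lies below the graph, so $\int_0^t\omega \ge \int_0^t (s/t)\omega(t)\,\mathrm ds=(t/2)\omega(t)$. For the upper bound, $\omega$ non-decreasing and concave gives $\omega(s)\le\omega(t/2)+\omega'(t/2^+)(s-t/2)$ is messier; simpler is to use subadditivity-type: on $[0,t]$, $\omega(s)\le 2\omega(t/2)$ would only give $2t\omega(t/2)$. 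The correct route for $\varphi_\omega(t)\le t\omega(t/2)$ is: by concavity $\omega\big(\tfrac{s+(t-s)}{2}... \big)$ — cleanest is $\int_0^t\omega(s)\,\mathrm ds = \int_0^{t/2}\omega(s)\,\mathrm ds + \int_{t/2}^t\omega(s)\,\mathrm ds$ and in the second integral substitute $s\mapsto t-s$ and use concavity $\omega(s)+\omega(t-s)\le 2\omega(t/2)$ to bound the sum of the two integrals by $\int_0^{t/2}2\omega(t/2)\,\mathrm ds=t\omega(t/2)$.

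For the second group, assume $\omega$ increasing and coercive so $\omega^{-1}:[0,\infty)\to[0,\infty)$ exists, is increasing, concave-inverse (hence convex), with $\omega^{-1}(0)=0$. Statement (3), $(\varphi_\omega)^*(t)=\int_0^t\omega^{-1}(s)\,\mathrm ds$, is the classical Young/Legendre duality for the pair of convex conjugate functions $\varphi_\omega$ and its conjugate: since $\varphi_\omega'=\omega$ is increasing, $\varphi_\omega$ is strictly convex with derivative $\omega$, and the Legendre transform of a function with increasing derivative $\omega$ has derivative $\omega^{-1}$; integrating from $0$ (both conjugates vanish at $0$ because $\varphi_\omega(0)=0$ and $\varphi_\omega$ is minimized at $0$) yields the formula. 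I would either cite \cite{BV10,Z02} for this or give the one-line area interpretation: $\sup_{s\ge 0}(ts-\varphi_\omega(s))$ is attained where $\omega(s)=t$, i.e. $s=\omega^{-1}(t)$, and the supremal value equals the area between the line of slope $t$ and the graph of $\varphi_\omega$, which is exactly $\int_0^t\omega^{-1}$. Statement (4) is the equality case in Young's inequality $\varphi_\omega(t)+(\varphi_\omega)^*(s)\ge ts$: equality holds iff $s\in\partial\varphi_\omega(t)=\{\omega(t)\}$, which since $\omega$ is a genuine (single-valued, continuous, increasing) function is exactly $s=\omega(t)$. Statement (5) is just (2) applied to the modulus $\omega^{-1}$ in place of $\omega$: indeed $\omega^{-1}$ is itself non-decreasing with $\omega^{-1}(0)=\omega^{-1}(0^+)=0$, and although $\omega^{-1}$ is convex rather than concave so it is not a modulus of continuity in the paper's sense, the two inequalities in (2) that we need only used monotonicity of the integrand: $\int_0^t\omega^{-1}(s)\,\mathrm ds\le t\,\omega^{-1}(t)$ trivially, and for the lower bound $\int_0^t\omega^{-1}\ge\int_{t/2}^t\omega^{-1}\ge (t/2)\omega^{-1}(t/2)$. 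Combined with (3), these give (5).

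The only genuinely delicate point — and the one I'd flag as the main obstacle — is making sure the degenerate/boundary cases are handled cleanly: $\omega$ could in principle be identically zero (then everything is trivial but several ratios are $0/0$), $\omega$ need not be differentiable so subdifferential language must be used in (4) rather than $\varphi_\omega'=\omega$ pointwise, and in (5) one must be careful that $\omega^{-1}$ is not a "modulus" so the cited form of (2) cannot be quoted verbatim but must be re-derived from monotonicity alone. I would therefore state (2) and its proof in a way that isolates exactly which monotonicity/concavity hypothesis each half uses, so that (5) becomes a one-line corollary. Everything else is a short computation, and I would present the whole proposition's proof in a few lines per item, citing \cite{BV10,Z02} for the standard conjugacy facts underlying (3) and (4).
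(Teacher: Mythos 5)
Items (1)--(4) of your plan are correct and in line with the paper, which itself only sketches this proposition (concavity for (1)--(2), a citation to \cite{Z02} for (3)--(4), and ``(5) is an easy consequence''). The genuine gap is in your item (5): the bounds you derive are not the bounds stated. From (3) and monotonicity of $\omega^{-1}$ alone you obtain $\int_0^t\omega^{-1}(s)\,\mathrm{d}s\le t\,\omega^{-1}(t)$ and $\int_0^t\omega^{-1}(s)\,\mathrm{d}s\ge\int_{t/2}^t\omega^{-1}(s)\,\mathrm{d}s\ge(t/2)\,\omega^{-1}(t/2)$, i.e.\ $(t/2)\,\omega^{-1}(t/2)\le(\varphi_\omega)^*(t)\le t\,\omega^{-1}(t)$, whereas the proposition asserts the sharper pair $t\,\omega^{-1}(t/2)\le(\varphi_\omega)^*(t)\le(t/2)\,\omega^{-1}(t)$ --- each of your constants is off by a factor of $2$ in the weak direction. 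Moreover, monotonicity of the integrand genuinely cannot give the stated constants: an increasing step function that jumps from $0$ to $1$ just below $t/2$ violates $\int_0^t g\ge t\,g(t/2)$, so some convexity input is unavoidable.

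The fix is exactly the mirror image of your argument for (2), using that $\omega^{-1}$ is convex, increasing, and vanishes at $0$ (being the inverse of a concave increasing function with $\omega(0)=0$). For the upper bound, convexity with $\omega^{-1}(0)=0$ gives the chord inequality $\omega^{-1}(\lambda t)\le\lambda\,\omega^{-1}(t)$ for $\lambda\in[0,1]$, and integrating yields $(\varphi_\omega)^*(t)\le(t/2)\,\omega^{-1}(t)$. For the lower bound, the midpoint (Hermite--Hadamard) inequality $\omega^{-1}(s)+\omega^{-1}(t-s)\ge 2\,\omega^{-1}(t/2)$ gives, after symmetrizing the integral, $(\varphi_\omega)^*(t)\ge t\,\omega^{-1}(t/2)$. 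Alternatively, the lower bound follows without (3) by taking $s_0=2\,\omega^{-1}(t/2)$ in the supremum defining $(\varphi_\omega)^*(t)$ and estimating $\varphi_\omega(s_0)\le s_0\,\omega(s_0/2)=t\,\omega^{-1}(t/2)$ via the upper half of (2). With this replacement your item (5) is complete; it is worth restating your proof of (2) so that the ``reversed'' versions for convex integrands vanishing at $0$ can be quoted directly, which is precisely the isolation of hypotheses you proposed but did not carry out sharply enough.
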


Properties (1) and (2) easily follow from the concavity of $\omega.$ The identity (3) and property (4) are proven in \cite[Lemma 3.7.1]{Z02}, and (5) is an easy consequence of those.

\smallskip

The following lemma was already proved in \cite{AM21}, as a corollary of the results in \cite{VNC78}. We state here in a slightly different form, which will be convenient for our proofs in Sections \ref{sect:formulae_convexenvelopes} and \ref{sect:generalresult_prooffirstmaintheorems}.

\begin{lemma}\label{lem:regularity_psiomega_Hilbert}
Let $(X, |\cdot|)$ be a Hilbert space, and $\omega$ an modulus of continuity that is increasing and with $\lim_{t\to\infty}\omega(t)=\infty.$ Then the function $\psi_\omega(x)= \varphi_\omega(| x|), \: x\in X$, satisfies:
$$
 \lambda \psi_\omega(z + (1 - \lambda)h)+ (1-\lambda)\psi_\omega(z - \lambda h) - \psi_\omega(z) \leq  2 \lambda(1-\lambda)\varphi_\omega(|h|) \quad \text{for all} \quad z,h \in X, \: \lambda \in [0,1].
$$
Moreover, if $\omega(t) = t^\alpha,$ with $\alpha \in (0,1],$ the function $\psi_\alpha(x)= \varphi_\alpha(| x|), \: x\in X$, satisfies the inequality:
$$ 
 \lambda \psi_\alpha(z + (1 - \lambda)h)+ (1-\lambda)\psi_\alpha(z - \lambda h) - \psi_\alpha(z) \leq  2^{1-\alpha} \lambda(1-\lambda)\varphi_\alpha(|h|) \quad \text{for all} \quad z,h \in X, \: \lambda \in [0,1].
 $$
\end{lemma}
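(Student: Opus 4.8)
\textbf{Proof plan for Lemma \ref{lem:regularity_psiomega_Hilbert}.}

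The plan is to reduce the vector inequality to a one-dimensional estimate about $\varphi_\omega$, exploiting the Hilbertian identity for the norm. First I would fix $z,h\in X$ and $\lambda\in[0,1]$ and compute the squared norms appearing inside $\psi_\omega$. Writing $a=z+(1-\lambda)h$ and $b=z-\lambda h$, one has $\lambda a+(1-\lambda)b=z$ and $a-b=h$, so by the parallelogram-type identity in a Hilbert space
\begin{equation*}
\lambda|a|^2+(1-\lambda)|b|^2=|z|^2+\lambda(1-\lambda)|h|^2.
\end{equation*}
Hence, setting $r=|z|$ and $t=|h|$, we have $|a|=\sqrt{r^2+\lambda(1-\lambda)t^2+2(1-\lambda)\langle z,h\rangle+\dots}$; more cleanly, it suffices to control $\lambda\varphi_\omega(|a|)+(1-\lambda)\varphi_\omega(|b|)$ from above, and since $\varphi_\omega$ is non-decreasing and $\varphi_\omega(\sqrt{\cdot})$ need not be convex, I would instead use that $s\mapsto\varphi_\omega(\sqrt{s})$ is concave (which follows from property (1) of Proposition \ref{prop:inequalities_omega_varphiomega}: $\varphi_\omega'(s)=\omega(s)$ so $\frac{d}{ds}\varphi_\omega(\sqrt s)=\frac{\omega(\sqrt s)}{2\sqrt s}$ is non-increasing because $t\mapsto\omega(t)/t$ is non-increasing). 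Applying Jensen's inequality to the concave function $u(s):=\varphi_\omega(\sqrt s)$ with the weights $\lambda,1-\lambda$ at the points $|a|^2,|b|^2$ whose $\lambda$-average is $|z|^2+\lambda(1-\lambda)|h|^2$ gives
\begin{equation*}
\lambda\varphi_\omega(|a|)+(1-\lambda)\varphi_\omega(|b|)=\lambda u(|a|^2)+(1-\lambda)u(|b|^2)\le u\bigl(|z|^2+\lambda(1-\lambda)|h|^2\bigr)=\varphi_\omega\Bigl(\sqrt{|z|^2+\lambda(1-\lambda)|h|^2}\Bigr).
\end{equation*}

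The remaining task is the purely scalar inequality: for all $r,t\ge0$ and $\mu:=\lambda(1-\lambda)\in[0,1/4]$,
\begin{equation*}
\varphi_\omega\bigl(\sqrt{r^2+\mu t^2}\bigr)-\varphi_\omega(r)\le 2\mu\,\varphi_\omega(t).
\end{equation*}
To prove this I would use the subadditive-type behaviour of $\varphi_\omega$ coming from concavity of $\omega$: since $\varphi_\omega'=\omega$ and $\omega$ is concave and non-decreasing with $\omega(0)=0$, the fundamental theorem of calculus gives $\varphi_\omega(\sqrt{r^2+\mu t^2})-\varphi_\omega(r)=\int_r^{\sqrt{r^2+\mu t^2}}\omega(s)\,ds\le(\sqrt{r^2+\mu t^2}-r)\,\omega\bigl(\sqrt{r^2+\mu t^2}\bigr)$. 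Now $\sqrt{r^2+\mu t^2}-r\le\frac{\mu t^2}{\sqrt{r^2+\mu t^2}}$, and $\omega(\sqrt{r^2+\mu t^2})\le\omega(r)+\omega(\sqrt\mu\,t)$ by subadditivity of $\omega$ (a standard consequence of concavity and $\omega(0)=0$), while also $\omega(\sqrt\mu\,t)\le\sqrt\mu\,\omega(t)$ if $\sqrt\mu\le1$, i.e. always. Combining and using $\mu\le1/4$ together with the crude bound $t^2/\sqrt{r^2+\mu t^2}\le t/\sqrt\mu$ when $r=0$ and a case split on whether $r\le t$ or $r>t$ should yield the factor $2\mu\varphi_\omega(t)$ after invoking $\varphi_\omega(t)\ge(t/2)\omega(t)$ from property (2). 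This scalar step is the main obstacle: getting exactly the constant $2$ (not something larger) requires handling the two regimes $r\lesssim t$ and $r\gtrsim t$ separately and being careful with the interplay between $\sqrt{r^2+\mu t^2}-r$ and $\omega$.

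For the Hölder case $\omega(t)=t^\alpha$, the scalar inequality becomes sharper and one can beat the constant $2$. Here $\varphi_\alpha(t)=\frac{t^{1+\alpha}}{1+\alpha}$, so after the same Jensen reduction (with $u(s)=\frac{s^{(1+\alpha)/2}}{1+\alpha}$, which is concave since $(1+\alpha)/2\le1$) the claim reduces to
\begin{equation*}
\bigl(r^2+\lambda(1-\lambda)t^2\bigr)^{\frac{1+\alpha}{2}}-r^{1+\alpha}\le 2^{1-\alpha}(1+\alpha)\lambda(1-\lambda)\cdot\frac{t^{1+\alpha}}{1+\alpha}=2^{1-\alpha}\lambda(1-\lambda)\,t^{1+\alpha}\cdot\frac{1+\alpha}{1+\alpha},
\end{equation*}
i.e. $(r^2+\mu t^2)^{p}-r^{2p}\le 2^{1-\alpha}\mu t^{2p}$ where $p=(1+\alpha)/2\in(1/2,1]$ and $\mu=\lambda(1-\lambda)$. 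Since $0<p\le1$, the function $s\mapsto s^p$ is concave and subadditive, so $(r^2+\mu t^2)^p\le r^{2p}+(\mu t^2)^p=r^{2p}+\mu^p t^{2p}$, giving the bound with constant $\mu^{p-1}=\mu^{(\alpha-1)/2}$; but $\mu\le1/4$ and $(\alpha-1)/2\le0$ force $\mu^{(\alpha-1)/2}\le(1/4)^{(\alpha-1)/2}=2^{1-\alpha}$, which is exactly the claimed constant. This finishes the Hölder case cleanly, and I expect the only real work to be the first, general $\omega$ estimate; I would present the Hölder argument first as motivation if that reads better, then do the general case.
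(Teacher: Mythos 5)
Your opening reductions are individually correct (the Hilbert identity $\lambda|z+(1-\lambda)h|^2+(1-\lambda)|z-\lambda h|^2=|z|^2+\lambda(1-\lambda)|h|^2$ holds, and $u(s)=\varphi_\omega(\sqrt{s})$ is concave since $u'(s)=\omega(\sqrt s)/(2\sqrt s)$ is non-increasing), but the Jensen step is too lossy, and the scalar inequality it leaves you with is simply false. You need
$\varphi_\omega\bigl(\sqrt{r^2+\mu t^2}\bigr)-\varphi_\omega(r)\le 2\mu\,\varphi_\omega(t)$ for all $r,t\ge 0$ and $\mu=\lambda(1-\lambda)\in[0,1/4]$. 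Take $r=0$ and the admissible modulus $\omega(t)=t^{1/2}$: then the left side equals $\varphi_\omega(\sqrt{\mu}\,t)=\mu^{3/4}\varphi_\omega(t)$, and $\mu^{3/4}\le 2\mu$ forces $\mu\ge 1/16$, so the inequality fails whenever $\lambda(1-\lambda)<1/16$. The same happens in your H\"older finish: at $r=0$ you need $\mu^{(1+\alpha)/2}\le 2^{1-\alpha}\mu$, which fails as $\mu\to 0^+$ for every $\alpha<1$. The lemma itself is fine in this regime because the true left-hand side at $z=0$, namely $\lambda\varphi_\omega((1-\lambda)|h|)+(1-\lambda)\varphi_\omega(\lambda|h|)$, is of order $\mu\,\varphi_\omega(|h|)$ for $\lambda$ near $0$ or $1$, whereas your Jensen majorant $\varphi_\omega(\sqrt{\mu}\,|h|)$ is of the much larger order $\mu^{(1+\alpha)/2}$; averaging in the squared-norm variable destroys exactly the cancellation needed when $|z|\ll|h|$ and $\lambda$ is extreme. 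So this is not a matter of tightening constants in the "main obstacle" step: the strategy itself cannot deliver the stated bounds.

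Two of your auxiliary inequalities are also reversed. For a concave modulus with $\omega(0)=0$ one has $\omega(\sqrt{\mu}\,t)\ge\sqrt{\mu}\,\omega(t)$ when $\sqrt{\mu}\le 1$ (Proposition \ref{prop:inequalities_omega_varphiomega}(1) gives $\omega(\lambda t)\le\lambda\omega(t)$ only for $\lambda\ge 1$), not $\le$ as you claim. Likewise, since the exponent $(\alpha-1)/2$ is negative, $\mu\le 1/4$ gives $\mu^{(\alpha-1)/2}\ge(1/4)^{(\alpha-1)/2}=2^{1-\alpha}$, the opposite of what your H\"older argument needs — this is the same failure seen above at $r=0$. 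For comparison, the paper does not argue directly at all: it quotes the inequality $\lambda\psi_\omega(x)+(1-\lambda)\psi_\omega(y)-\psi_\omega(\lambda x+(1-\lambda)y)\le 2\lambda(1-\lambda)\varphi_\omega(|x-y|)$ from \cite{AM21} (itself a duality consequence of the results of \cite{VNC78} on uniformly convex functions), and obtains the stated form by substituting $z=\lambda x+(1-\lambda)y$, $h=x-y$; the constant $2^{1-\alpha}$ in the H\"older case comes from the same source. A self-contained proof would have to keep track of the direction of $h$ relative to $z$ (e.g.\ work with $t\mapsto\varphi_\omega(|z+th|)$ and the smoothness of the norm) rather than collapsing everything onto the radial variable via Jensen.
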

\begin{proof}
It was shown in \cite[Lemmas 2.3 and 2.4]{AM21} that $\psi_\omega$ satisfies the following inequality:
$$
\lambda \psi_\omega(x) + (1-\lambda) \psi(y) - \psi_\omega( \lambda x+ (1-\lambda)y) \leq 2 \lambda(1-\lambda) \psi_\omega(|x-y|), \quad x,y\in X, \, \lambda \in [0,1].
$$
This is actually a corollary of the results in \cite[Theorem 3]{VNC78} for uniformly convex functions, after applying an elementary duality argument. In order to get the inequality in the desired form, it suffices to replace $z$ with $\lambda x + (1-\lambda) y$ and $h$ with $x-y$ in the last inequality, since then
$$
z+(1-\lambda) h = x, \quad z-\lambda h = y.
$$

The H\"older case follows with the exact same argument. 

\end{proof}

\section{Smoothness and modulus of continuity for convex-type jets}\label{sect:relationsbetweennorms}

In this section, we establish precise relationships between the different magnitudes we will be using to gauge the $C^{1,\omega}$ regularity of our jets and functions.    

\subsection{Conditions and estimates for arbitrary jets}

As we announced in the introduction, the conditions \eqref{eq:definitionCw1omega_forallx} and \eqref{eq:definitionCw1omega} are identical, with the same constant $M>0,$ when the modulus $\omega$ is assumed to be increasing and coercive. This is the content of the following lemma.

\begin{lemma}\label{lem:rewriteCW1omega}
Let $X$ be a normed space, and $\omega$ an increasing modulus of continuity satisfying $\lim_{t \to \infty}\omega(t)=\infty$. Then, for a jet $(f,G)$ on a set $E \subset X,$ and $M>0,$ the condition \eqref{eq:definitionCw1omega} with constant $M$ is equivalent to \eqref{eq:definitionCw1omega_forallx} with the same constant. 
\end{lemma}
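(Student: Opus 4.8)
The plan is to prove the equivalence \emph{pointwise in the pair of variables}: I would fix $y,z\in E$ and show that the subfamily of inequalities appearing in \eqref{eq:definitionCw1omega_forallx} indexed by $x\in X$ (for this fixed $y,z$) is equivalent to the single inequality of \eqref{eq:definitionCw1omega} (for this $y,z$); quantifying afterwards over all $y,z\in E$ yields the lemma. First I would make the change of variable $u=x-y$, which is a bijection of $X$ onto itself, and split $G(z)(x-z)=G(z)(x-y)+G(z)(y-z)$. Then \eqref{eq:definitionCw1omega_forallx} for the pair $(y,z)$ reads: for all $u\in X$,
$$
f(y)-f(z)-G(z)(y-z)\ \geq\ (G(z)-G(y))(u) - M\varphi_\omega(\|u\|).
$$
Being a universally quantified inequality in $u$ whose left-hand side does not depend on $u$, this is equivalent — in both directions, and independently of whether a maximiser exists — to the single inequality
$$
f(y)-f(z)-G(z)(y-z)\ \geq\ \sup_{u\in X}\Big[(G(z)-G(y))(u)-M\varphi_\omega(\|u\|)\Big].
$$

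Next I would evaluate that supremum. Since $\varphi_\omega(\|u\|)$ depends on $u$ only through $r:=\|u\|$, I would first maximise over the sphere of radius $r$: by the very definition of the dual norm, $\sup_{\|u\|=r}(G(z)-G(y))(u)=r\,\|G(z)-G(y)\|_*$. Writing $s:=\|G(z)-G(y)\|_*=\|G(y)-G(z)\|_*\geq 0$, the supremum collapses to the one-dimensional quantity $\sup_{r\geq 0}\big[sr-M\varphi_\omega(r)\big]$. Recalling the convention that $\varphi_\omega$ is extended to an even function on $\R$ (so that, for $s\geq 0$, the supremum over $r\in\R$ coincides with the one over $r\geq 0$), and using the elementary scaling identity $(M\varphi_\omega)^*(s)=M(\varphi_\omega)^*(s/M)$ for the Fenchel conjugate, this supremum equals $M\,(\varphi_\omega)^*\!\big(\tfrac1M\|G(y)-G(z)\|_*\big)$. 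Substituting back, the displayed inequality becomes exactly \eqref{eq:definitionCw1omega} with constant $M$ for the pair $(y,z)$, which finishes the argument.

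I do not expect a genuine obstacle here; the only points that require some care are (i) checking that the rearrangement in the first step is strictly reversible, so that the equivalence really runs both ways, and (ii) the elementary fact that ``$A\geq\Phi(u)$ for all $u$'' is equivalent to ``$A\geq\sup_u\Phi(u)$'' even when the supremum is not attained. The hypotheses that $\omega$ is increasing with $\lim_{t\to\infty}\omega(t)=\infty$ enter only through Proposition \ref{prop:inequalities_omega_varphiomega}, ensuring that $\omega^{-1}$ and $(\varphi_\omega)^*$ are well defined and finite-valued, so that \eqref{eq:definitionCw1omega} is a meaningful inequality between real numbers; the computation above does not otherwise use coercivity.
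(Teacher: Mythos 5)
Your argument is correct, and it is organized differently from the paper's proof, in a way that is arguably cleaner. The paper proves the two implications separately: for \eqref{eq:definitionCw1omega} $\Rightarrow$ \eqref{eq:definitionCw1omega_forallx} it invokes the Fenchel--Young inequality $ts\leq(\varphi_{M\omega})^*(t)+\varphi_{M\omega}(s)$, and for the converse it fixes $y,z$, chooses an $\varepsilon$-almost norming vector $v$ with $\|v\|=(M\omega)^{-1}(\|G(y)-G(z)\|_*)$, sets $x=y+v$, and uses the equality case of the conjugate (Proposition \ref{prop:inequalities_omega_varphiomega}(4)) before letting $\varepsilon\to0$. You instead fix $y,z$, observe that the family of inequalities in \eqref{eq:definitionCw1omega_forallx} indexed by $x$ is equivalent to the single inequality with $\sup_{u\in X}\bigl[(G(z)-G(y))(u)-M\varphi_\omega(\|u\|)\bigr]$ on the right, and then compute that supremum exactly: maximizing over the sphere $\|u\|=r$ gives $r\|G(y)-G(z)\|_*$ by the definition of the dual norm, and the remaining one-dimensional supremum is by definition $(M\varphi_\omega)^*(\|G(y)-G(z)\|_*)=M(\varphi_\omega)^*\bigl(\tfrac1M\|G(y)-G(z)\|_*\bigr)$. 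This single conjugate computation subsumes both of the paper's directions (the upper bound playing the role of Fenchel--Young, the near-attainment over the sphere replacing the explicit choice of $v$ and the $\varepsilon$-limit), and your remarks on reversibility and on "$A\geq\Phi(u)$ for all $u$ iff $A\geq\sup_u\Phi(u)$" correctly address the only delicate points. Your closing observation about the role of the hypotheses on $\omega$ is also accurate: increasingness and coercivity are only needed so that $(\varphi_\omega)^*$ (equivalently $\omega^{-1}$) is finite and \eqref{eq:definitionCw1omega} is a genuine real-valued inequality; the supremum identity itself holds without them in the extended-real sense. In short: correct, same underlying convex-duality mechanism as the paper, but packaged as one exact conjugate evaluation rather than two directional estimates.
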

\begin{proof}
Assume first that $(f,G)$ satisfies \eqref{eq:definitionCw1omega} for $M>0$. Obviously $M \varphi_\omega = \varphi_{M \omega}.$ So, using elementary properties of Fenchel conjugates, namely, that 
$$
M (\varphi_\omega)^*(t/M)  = (M \varphi_\omega)^*(t) = (\varphi_{M \omega})^*(t)
$$
we see that \eqref{eq:definitionCw1omega} can be rewritten as
$$
f(y) \geq f(z) + G(z)(y-z) + (\varphi_{M\omega})^*\left(\norm{G(y) - G(z)}_*\right), \quad \text{for all} \quad y,z\in E.
$$
Now, if $x\in X,$ $y,z\in E,$ plugging in our inequality from \eqref{eq:definitionCw1omega} we have 
\begin{align*}
& f(y)  + G(y)(x-y) +  \varphi_{M\omega}(\|x-y\|) - f(z) - G(z)(x-z) \\
&\geq f(z) + G(z)(y-z) + (\varphi_{M\omega})^*(\norm{G(y)-G(z)}_* + G(y)(x-y) + \varphi_{M\omega}(\norm{x-y}) - f(z) - G(z)(x-z)\\
&= G(z)(y -x) + G(y)(x-y) +(\varphi_{M\omega})^*(\norm{G(y)-G(z)}_*) + \varphi_{M\omega}(\norm{x-y})\\
&= (G(y)-G(z))(x-y) + (\varphi_{M\omega})^*(\norm{G(y)-G(z)}_*) + \varphi_{M\omega}(\norm{x-y}).
\end{align*}
We can now use the Fenchel-Young inequality, that is, for $t,s \geq 0$ we have 
$$
ts \leq (\varphi_{M\omega})^*(t) + \varphi_{M\omega} (s);
$$
see Proposition \ref{prop:inequalities_omega_varphiomega}. Letting $t = \norm{G(y)-G(z)}_*$ and $s = \norm{x-y}$, we have 
\begin{align*}
  & (G(y)-G(z))(x-y)   +  (\varphi_{M\omega})^*(\norm{G(y)-G(z)}_*) + \varphi_{M\omega}(\norm{x-y}) \\
 & \quad \geq (G(y)-G(z))(x-y)  + \norm{G(y)-G(z)}_*\norm{x-y} \geq 0.
\end{align*}
We may conclude that 
$$
f(y)  + G(y)(x-y) +  \varphi_{M\omega}(\|x-y\|) - f(z) - G(z)(x-z) \geq 0,
$$
which is precisely condition \eqref{eq:definitionCw1omega_forallx}. 

\smallskip

Conversely, assume that $(f,G)$ satisfies condition \eqref{eq:definitionCw1omega_forallx} with constant $M>0.$ Fix $y,z\in E$, and let $\varepsilon>0.$ By the definition of the norm $\|G(y)-G(z)\|_*$ we can find $v \in X^*$ with
\begin{equation}\label{eq:proofCW1wequivalenceschoiceofv}
\|v\|= (M\omega)^{-1}( \|G(y)-G(z)\|_*) \quad \text{and} \quad (G(y)-G(z)(v) \geq \|G(y)-G(z)\|_* \|v\|- \varepsilon.    
\end{equation}
Define $x=y+v$ and write $s = \| G(y)-G(z)\|_*.$ By the properties of the Fenchel conjugate for the function $(M \varphi_\omega) = \varphi_{M \omega}$, we obtain
$$
s (M \omega)^{-1}(s) = ( \varphi_{M\omega})((M\omega)^{-1}(s)) + (\varphi_{M \omega})^* (s). 
$$
By the choice of $\|v\|$ (see \eqref{eq:proofCW1wequivalenceschoiceofv} this is the same as
\begin{align} \label{eq:identityFenchelconjugates_Gyzv}
   \|G(y)-G(z)\|_* \|v\| &  = (M \varphi_\omega)(\|v\|)+ (M \varphi_\omega)^*(\| G(y)-G(z)\|_*)  \nonumber \\
   & =  (M \varphi_\omega)(\|x-y\|)+ (M \varphi_\omega)^*(\| G(y)-G(z)\|_*) .
\end{align}
Using first \eqref{eq:definitionCw1omega_forallx} with $x$ as above, then \eqref{eq:proofCW1wequivalenceschoiceofv}, and finally \eqref{eq:identityFenchelconjugates_Gyzv} we get that
\begin{align*}
  f(y)& -f(z)-G(z)(y-z)-(M \varphi_\omega)^*(\|G(y)-G(z)\|_*) \\
  & = f(y)+G(y)(x-y)-f(z)-G(z)(x-z) + (G(z)-G(y))(x-y) -(M \varphi_\omega)^*(\|G(y)-G(z)\|_*) \\
  & \geq -(M \varphi_\omega)(\|x-y)\| +  \|G(y)-G(z)\|_* \|v\|- \varepsilon    -(M \varphi_\omega)^*(\|G(y)-G(z)\|_*) \\
  & \geq -\varepsilon.
\end{align*}
Since $\varepsilon>0$ is arbitrary, we deduce that
$$
f(y)- f(z)-G(z)(y-z)-(M \varphi_\omega)^*(\|G(y)-G(z)\|_*) \geq 0,
$$
thus obtaining the inequality \eqref{eq:definitionCw1omega}.
\end{proof}

Let us mention that that such an exact equivalence between a \textit{half-extrinsic} (like \ref{eq:definitionCw1omega_forallx}) and \textit{fully intrinsic} (like \ref{eq:definitionCw1omega_forallx}) has not been established for in the general (not necessarily convex) $C^{1,\omega}$ setting, and not even in the setting of Hilbert spaces and H\"older modulus of continuity. See for example conditions $(W^{1,\omega})$ and $(mg^{1,\omega})$ in \cite{AM21}, which are shown to be equivalent but \textit{not} with the same constant $M>0.$

\medskip

We now precisely estimate $\lip_\omega(G,E)$ in terms of $A_{\omega, \co}(f,G,E)$ with absolute multiplicative factors, for arbitrary jets $(f,G) : E \to \R \times X^*.$

\begin{theorem}\label{thm:relationsAseminorm_Lip}
Let $(X,\|\cdot\|)$ be a normed space, $E \subset X$, a $1$-jet $(f,G) : E \to \R \times X^*$, and a modulus of continuity $\omega$. Then we have the estimate
$$
\lip_\omega(G,E) \leq \frac{4}{3} A_{\omega, \co}(f,G,E).
$$
Moreover, in the case $\omega(t) = t^\alpha,$ with $0 < \alpha \leq 1,$ we can arrange
\begin{equation}\label{eq:estimateLipalphaAalpha}
    \lip_\alpha(G,E) \leq \left(\frac{1 + \alpha}{2\alpha}\right)^\alpha A_{\alpha, \co}(f,G,E).
\end{equation}
\end{theorem}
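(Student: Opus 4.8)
The plan is to test the defining condition \eqref{eq:definitionCw1omega_forallx} with the optimal constant $M := A_{\omega, \co}(f,G,E)$ at two carefully chosen points and add the resulting inequalities. We may assume $M < \infty$; fixing distinct $y,z \in E$, write $r := \|y-z\|$, $\delta := \|G(y)-G(z)\|_*$, and assume $\delta > 0$ (otherwise there is nothing to do). Given $\varepsilon \in (0,\delta)$, choose $v \in X$ with $\|v\| = 1$ and $(G(y)-G(z))(v) \geq \delta - \varepsilon$. For a parameter $t > 0$, I would apply \eqref{eq:definitionCw1omega_forallx} first to the triple $(y,z,x)$ with $x = y - tv$, and then to $(z,y,x)$ with $x = z + tv$; in both cases the displacement has norm $t$. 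Summing the two inequalities, the $f$-terms cancel and, after rearranging, everything collapses to
$$
2t\,(G(y)-G(z))(v) \;\leq\; (G(y)-G(z))(y-z) + 2M\,\varphi_\omega(t).
$$
Estimating the left-hand side below by $2t(\delta-\varepsilon)$, the first term on the right by $\|G(y)-G(z)\|_*\|y-z\| = \delta r$, and letting $\varepsilon \to 0^+$, I obtain the key one-variable estimate
$$
\delta\,(2t - r) \;\leq\; 2M\,\varphi_\omega(t), \qquad t > 0.
$$

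For the general bound, take $t = 2r$ and use Proposition \ref{prop:inequalities_omega_varphiomega}(2) in the form $\varphi_\omega(2r) \leq 2r\,\omega(r)$, which gives $3r\delta \leq 4Mr\,\omega(r)$, i.e.\ $\delta \leq \tfrac{4}{3}M\,\omega(r)$; dividing by $\omega(r)$ and taking the supremum over distinct $y,z \in E$ yields $\lip_\omega(G,E) \leq \tfrac{4}{3}A_{\omega,\co}(f,G,E)$.

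For the H\"older modulus $\omega(t)=t^\alpha$ one has $\varphi_\omega(t) = t^{1+\alpha}/(1+\alpha)$, so the key estimate reads $\delta \leq \frac{2M}{1+\alpha}\cdot\frac{t^{1+\alpha}}{2t-r}$ for $t > r/2$. I would then minimise the right-hand side over $t$: the function $t \mapsto t^{1+\alpha}/(2t-r)$ attains its minimum on $(r/2,\infty)$ at $t_0 = \frac{(1+\alpha)r}{2\alpha}$, where $2t_0 - r = r/\alpha$, and a short computation gives the minimal value $\alpha\big(\tfrac{1+\alpha}{2\alpha}\big)^{1+\alpha}r^{\alpha}$. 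Substituting this back and simplifying $\frac{2M}{1+\alpha}\cdot\alpha\big(\tfrac{1+\alpha}{2\alpha}\big)^{1+\alpha} = \big(\tfrac{1+\alpha}{2\alpha}\big)^{\alpha}M$, one gets $\delta \leq \big(\tfrac{1+\alpha}{2\alpha}\big)^{\alpha}M\,r^\alpha$, and dividing by $r^\alpha$ and taking the supremum over distinct $y,z \in E$ gives exactly \eqref{eq:estimateLipalphaAalpha}.

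All the computations are elementary; the only step that requires a little thought is the choice of which two instances of \eqref{eq:definitionCw1omega_forallx} to combine. The substitutions $x = y - tv$ and $x = z + tv$ are designed precisely so that, upon addition, the nonlinear terms total $2M\varphi_\omega(t)$ while all the linear contributions from $G(y)$ and $G(z)$ consolidate into a single scalar inequality governed by $\delta$ and $\delta r$. Getting the signs right in that addition is where I would take the most care; once the key estimate is in hand, the two optimisations in $t$ are routine.
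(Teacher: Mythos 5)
Your proposal is correct, and it is essentially the paper's own argument: both proofs pair two instances of \eqref{eq:definitionCw1omega_forallx} at symmetrically chosen test points to isolate $(G(y)-G(z))$ against $\varphi_\omega$, and then optimise a single scalar parameter (your choice $t=2r$ and your minimisation of $t^{1+\alpha}/(2t-r)$ coincide, after the reparametrisation $t=(\tfrac12+s)\|y-z\|$, with the paper's choice $s=3/2$ and its minimisation of $(\tfrac12+s)^{1+\alpha}/s$). The only cosmetic difference is that you perturb from $y$ and $z$ along a near-norming direction and absorb the extra term $(G(y)-G(z))(y-z)\le \delta r$, whereas the paper perturbs around the midpoint $\tfrac12(y+z)\pm v$ and extracts the dual norm by a supremum over unit vectors after a triangle inequality; the resulting one-variable estimates are identical.
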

\begin{proof}
We may assume that $0 < A_{\omega, \co}(f,G,E) < \infty$, as otherwise, the estimates trivially hold. By homogeneity, we may also assume that $A_{\omega, \co}(f,G,E)=1.$

Now, given $x,x' \in X$, $y,z \in E$ we have the two following inequalities 
\begin{align*}
    f(y) + G(y)(x-y) - f(z) - G(z)(x-z) &\geq - \varphi_\omega\left(\norm{x-y}\right)\\
    f(z) + G(z)(x'-z) - f(y) - G(y)(x'-y) &\geq  -\varphi_\omega(\norm{x-y})
\end{align*}
Combining them we arrive at
$$
(G(y) - G(z))(x'-x) \leq \varphi_\omega\left(\norm{x-y}\right) + \varphi_\omega\left(\norm{x'-z}\right)
$$
Since our choices of $x$ and $x'$ were arbitrary, set them as $x = \frac{1}{2}(y+z)+v$ and $x' = \frac{1}{2}(y+z)-v$, with $v\in X$ arbitrary. We then have 
$$
(G(y) - G(z))(-v) \leq \varphi_\omega\left(\norm{\frac{1}{2}(y-z) + v}\right) \leq \varphi_\omega\left(\norm{\frac{1}{2}(y-z)} + \norm{v}\right)
$$
Writing $v = t\norm{y-z}u$ with $u\in X, \, \norm{u} = 1$, $t>0,$ and taking the supremum over $\|u\|=1$, the above estimate implies 
$$
\norm{G(y) - G(z)}_* \leq \frac{\varphi_\omega\left(\left(\frac{1}{2}+t)\norm{y-z}\right)\right)}{t\norm{y-z}}, \quad t>0, 
$$
that is
\begin{equation}\label{eq:estimateLipomegawithinfimumovert}
\norm{G(y) - G(z)}_* \leq \inf_{t > 0}\left\{\frac{\varphi_\omega\left(\left(\frac{1}{2}+t)\norm{y-z}\right)\right)}{t\norm{y-z}}\right\} .
\end{equation}
Applying the inequality $\varphi_\omega(t) \leq t \omega(t/2)$ from Proposition \ref{prop:inequalities_omega_varphiomega}, the above leads us to 
\begin{align*}
\norm{G(y) - G(z)}_* &  \leq \inf_{t > 0}\left\{ \left( 1 + \frac{1}{2t} \right) \omega \left( \frac{1}{2}( \frac{1}{2}+t) ) \|y-z\| \right)  \right\} \\
& \leq \inf_{ 0 < t \leq 3/2}\left\{ \left( 1 + \frac{1}{2t} \right) \omega(\|y-z\| \right \}  = \frac{4}{3} \omega(\|y-z\|).    
\end{align*}
This proves the first inequality.

Now, let $\omega(t)=t^\alpha$, so that $\varphi_\omega(t) = \frac{1}{1 + \alpha}t^{1 + \alpha}$. Inserting this into \eqref{eq:estimateLipomegawithinfimumovert} we get
$$
\norm{G(y) - G(z)}_* \leq \inf_{t > 0}\left\{\frac{\frac{1}{1+\alpha}\left((\frac{1}{2}+t)\norm{y-z}\right))^{1+\alpha}}{t\norm{y-z}}\right\}.
$$
To compute the infimum, we note that the function $h(t) = ( \frac{1}{2} + t )^{1+\alpha}/t$, for $t>0,$ attains the minimum at $t_0= 1/(2\alpha)$. Substituting this $t_0$ we obtain the inequality
$$
   \norm{G(y) - G(z)}_*  \leq \frac{(1 + \alpha)^{1 + \alpha}}{(2\alpha)^\alpha}\frac{\norm{y-z}^\alpha}{(1+\alpha)} = \left(\frac{1 + \alpha}{2\alpha}\right)^\alpha\norm{y-z}^\alpha,
$$
as desired. 
\end{proof}

Interestingly, the inequality \eqref{eq:estimateLipalphaAalpha} is sharp, as shown by following example. 

\begin{example}\label{ex:optimality_relationseminorms_Holder}
{\em
Let $X=\R$ with the usual metric, and let $E = \{-1, 1\}$. Define $F : \R \to \R$ by
$$
    F(t)  = \frac{|t|^{1 + \alpha}}{1 + \alpha}, \:  \text{ so that } \: F'(t)  =  \sign(t) |t|^\alpha, \quad t\in \R. 
$$
Define the jet $(f, G)=(F, F')|_{E},$ the restriction of $(F,F')$ to $E.$ We claim that
\begin{equation}\label{eq:sharpinequality_example}
\lip_\alpha(G,E) = \left(\frac{1 + \alpha}{2\alpha}\right)^\alpha A_{\alpha, \co}(f,G,E).
\end{equation}
thus showing that the inequality \eqref{eq:estimateLipalphaAalpha} is sharp. Indeed, clearly
$$
\lip_\alpha(G,E) = \sup_{y,z\in E, \, y \neq z}\frac{|G(y) - G(z)|}{|y - z|^\alpha}  = \frac{|G(1) - G(-1)|}{|1 - (-1)|^\alpha} = \frac{2}{2^\alpha} = 2^{1 - \alpha}.
$$ 
On the other hand, according to Lemma \ref{lem:rewriteCW1omega}, $A_{\alpha, \co}(f,G,E)$ is the smallest $M>0$ for which
$$
f(t) \geq f(s) + G(s)(t-s) + \frac{1}{1 + \frac{1}{\alpha}} |G(t) - G(s)|^{1+ \frac{1}{\alpha}}  M^{-1/\alpha}, \quad \text{for all} \quad t, s\in E,
$$
after using that, in this case, $(\varphi_\alpha)^* (t)= \frac{1}{1 + \frac{1}{\alpha}}t^{1+\frac{1}{ \alpha}},$ for $t \geq 0.$ 
Since $f(\pm 1) = 1/(1+\alpha)$ and $G( \pm 1 ) = \pm 1,$ we see that, either when $y=1,$ $z=-1$ or $y=-1,$ $z=1,$ those numbers $M>0$ must satisfy the inequality
$$
2 M^{1/\alpha} \geq \frac{1}{1 + \frac{1}{\alpha}} 2^{1+ \frac{1}{\alpha}}.
$$
The infimum of those is $M = \frac{2}{\left(1 + \frac{1}{\alpha}\right)^\alpha} ,$ and therefore
$$
A_{\alpha,\co}(f,G,E) = \frac{2}{\left(1 + \frac{1}{\alpha}\right)^\alpha}  .
$$
We immediately see that then \eqref{eq:sharpinequality_example} holds. 
}
\end{example}

\medskip

On the other hand, for jets $(F,DF)$ associated with globally defined convex functions $F: X \to \R$, the formula \eqref{eq:definitionseminormA} can be simplified as follows.

\begin{lemma}\label{lem:seminormAforextendedfunctions}
Let $(X, \|\cdot \|)$ be Banach space and $F\in C^{1,\omega}(X)$ be a convex function. Then
$$
A_{\omega, \co}(F,DF,X) =   \sup  \left\lbrace \frac{F(x)-F(y)-DF(y)(x-y)}{\varphi_\omega(\|x-y\|)} \, : \, x,y\in X, \, x \neq y \right\rbrace.
$$
\end{lemma}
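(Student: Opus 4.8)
The plan is to prove the two inequalities separately. Write $B$ for the supremum on the right-hand side of the claimed identity, taken (as all such expressions here) in $[0,+\infty]$, so that no a priori finiteness of $A_{\omega,\co}(F,DF,X)$ is needed; note also that $\varphi_\omega(t)>0$ for every $t>0$ (otherwise $\omega\equiv 0$, $F$ is affine, and both sides of the identity vanish), so all the divisions below are legitimate.

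The inequality $B\le A_{\omega,\co}(F,DF,X)$ is immediate from the definition \eqref{eq:definitionseminormA} by specializing $z=x$: with $E=X$, $f=F$, $G=DF$, for any $x\neq y$ in $X$ the triple $(y,z,x)=(y,x,x)$ is admissible in \eqref{eq:definitionseminormA}, and the corresponding quotient equals exactly $\big(F(x)-F(y)-DF(y)(x-y)\big)/\varphi_\omega(\|x-y\|)$. Taking the supremum over all $x\neq y$ gives $B\le A_{\omega,\co}(F,DF,X)$.

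For the reverse inequality $A_{\omega,\co}(F,DF,X)\le B$, fix arbitrary $y,z\in X$ and $x\in X$ with $x\neq y$. Since $F$ is convex and Fréchet differentiable, the gradient inequality yields $F(z)+DF(z)(x-z)\le F(x)$, whence
\[
F(z)+DF(z)(x-z)-F(y)-DF(y)(x-y)\ \le\ F(x)-F(y)-DF(y)(x-y)\ \le\ B\,\varphi_\omega(\|x-y\|).
\]
Dividing by $\varphi_\omega(\|x-y\|)>0$ and taking the supremum over all admissible $(y,z,x)$ in \eqref{eq:definitionseminormA} gives $A_{\omega,\co}(F,DF,X)\le B$, and combining the two bounds proves the lemma.

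There is essentially no obstacle here: the statement is a short consequence of the first-order convexity inequality $F(z)+DF(z)(x-z)\le F(x)$, which lets one replace the ``anchor'' point $z$ by $x$ in the numerator of \eqref{eq:definitionseminormA} without decreasing it, matched by the trivial observation that taking $z=x$ is itself admissible. The only points worth a word are the $\varphi_\omega>0$ remark above and the convention that the suprema live in $[0,+\infty]$.
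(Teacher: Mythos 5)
Your proof is correct and follows exactly the paper's argument: one direction by specializing $z=x$ in the definition of $A_{\omega,\co}$, and the other by the convexity (gradient) inequality $F(z)+DF(z)(x-z)\le F(x)$, which lets you bound the numerator by $F(x)-F(y)-DF(y)(x-y)$. The extra remarks about $\varphi_\omega>0$ and suprema taken in $[0,+\infty]$ are harmless refinements but not needed beyond what the paper does.
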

\begin{proof}
 Indeed, denoting by $A$ the supremum of the right-hand side, clearly $A \leq A_{\omega, \co}(F,DF,X), $ as we may take $z=x$ in the definition of $A_{\omega, \co}(F,DF,X)$; see \eqref{eq:definitionseminormA}. The convexity of $F$ gives the reverse inequality, as 
$$
F(z)+ DF(z)(x-z)- F(y)-DF(y)(x-y) \leq F(x) -F(y)-DF(y)(x-y)
$$
for all $x,y,z\in X.$   
\end{proof}

Finally, observe that even for convex functions $F \in C^{1,\omega}(X)$, the quantity $A_{\alpha , \co}(F,DF,X)$ may be strictly smaller than $\lip_{1/2} (DF, X)$. Indeed, letting $f(t) = \frac{2}{3} |t|^{3/2},$ $t\in \R,$ and $\alpha = 1/2,$ one has that
$
\lip_\alpha(f',\R) = \sqrt{2}.
$
And, according to Lemma \ref{lem:seminormAforextendedfunctions}, we have that
$$
A_{1/2, \co}(f,f',\R)= \sup  \left\lbrace \frac{f(t)-f(s)-f'(s)(t-s)}{\frac{2}{3}|t-s|^{3/2}} \, : \, t,s\in \R, \, t \neq s \right\rbrace.
$$
By homogeneity, it is easily seen that
$$
A_{1/2, \co}(f,f',\R)= \sup  \left\lbrace \frac{t^{3/2}+3 t^{1/2}+2}{2(t+1)^{3/2}} \, : \, t>0  \right\rbrace \leq 1,3066 < \sqrt{2} = \lip_{1/2}(f', \R).
$$

\subsection{Uniform smoothness and $C^{1,\omega}$ regularity for convex functions}

It is a well-known result that a convex and lower semicontinous function $F :X \to \R$ in a Banach space $X$ is of class $C^{1,\omega}(X)$ if and only $F$ satisfies the uniform smoothness condition
$$
F(x+h) + F(x-h) - 2 F(x) \leq C \|h\| \omega(\|h\|), \quad x,h \in X.
$$
A proof of this fact can be essentially found in \cite[Lemma V.3.5]{DGZ93} or in \cite[Proposition 4.5]{ALeGM18}. Nevertheless, since we are interested in quasi-sharp estimates for our constants $\lip_\omega(DF,X)$ and $A_{\omega, \co}(F,DF,X)$, we next reformulate this result in the following lemma, and provide the proof that leads to those precise estimates. 
 
\begin{lemma}\label{lem:convexcombregularity_impliesC1omega}
Let $X$ be a Banach space, $F: X \to \R$ a lower semicontinuous convex function, and $\omega$ a modulus of continuity. Assume that there is a constant $A>0$ so that
\begin{equation}\label{eq:unfiformsmoothnessF_inBanach}
\lambda F(x+(1-\lambda) h) + (1-\lambda) F(x-\lambda h) - F(x) \leq   \lambda (1-\lambda) \cdot A  \cdot \varphi_\omega(\|h\|), \quad \lambda \in [0,1], \, x,h \in X. 
\end{equation}
Then $F\in C^{1,\omega}(X)$ with $A_{\omega, \co}(F,DF, X) \leq A$ and $\lip_\omega(DF,X) \leq \frac{4}{3}A.$ Moreover, if $\omega(t) = t^\alpha,$ one can achieve the estimate $\lip_\alpha(DF,X) \leq \left( \frac{1+\alpha}{2\alpha} \right)^\alpha A.$ 
\end{lemma}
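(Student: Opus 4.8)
The plan is to deduce the $C^{1}$-differentiability and the modulus estimates from the uniform smoothness inequality \eqref{eq:unfiformsmoothnessF_inBanach} by purely convex-analytic manipulations, avoiding any smooth partition-of-unity machinery. First I would record the two standard consequences of \eqref{eq:unfiformsmoothnessF_inBanach} obtained by taking $\lambda \to 0^{+}$ and $\lambda \to 1^{-}$: for fixed $x,h$, writing $p(x,h)$ for the right derivative $\lim_{t\to 0^+} (F(x+th)-F(x))/t$ of the convex one-variable function $t\mapsto F(x+th)$ at $t=0$, inequality \eqref{eq:unfiformsmoothnessF_inBanach} divided by $\lambda$ and $1-\lambda$ respectively and passed to the limit gives
$$
F(x+h) - F(x) - p(x,h) \leq A\,\varphi_\omega(\|h\|), \qquad p(x,h) - \bigl(F(x)-F(x-h)\bigr) \leq A\,\varphi_\omega(\|h\|).
$$
Since $F$ is convex and finite everywhere on a Banach space it is locally Lipschitz, hence continuous, so its one-sided directional derivatives exist; the first inequality shows the right and left derivatives along any line agree up to $o(\|h\|)$ (because $\varphi_\omega(t) = o(t)$ as $t\to 0$, being the integral of $\omega$ with $\omega(0^+)=0$), which forces $F$ to be Gâteaux differentiable with $DF(x)(h) = p(x,h)$, and moreover
$$
0 \leq F(x+h) - F(x) - DF(x)(h) \leq A\,\varphi_\omega(\|h\|), \qquad x,h\in X,
$$
the left inequality being just convexity. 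This already yields $A_{\omega,\co}(F,DF,X)\leq A$ via Lemma \ref{lem:seminormAforextendedfunctions}.

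Next I would upgrade Gâteaux to Fréchet differentiability and extract the modulus of continuity of $DF$. Applying the displayed two-sided estimate at $x$ with increment $x'-x$ and at $x'$ with increment $x-x'$ and adding, exactly as in the proof of Theorem \ref{thm:relationsAseminorm_Lip}, gives
$$
\bigl(DF(x') - DF(x)\bigr)(x'-x) \leq A\,\varphi_\omega(\|x-x'\|) + A\,\varphi_\omega(\|x'-x\|) = 2A\,\varphi_\omega(\|x'-x\|).
$$
To turn this into a bound on $\|DF(x')-DF(x)\|_*$ I would run the same device as in Theorem \ref{thm:relationsAseminorm_Lip}: use the two-sided estimate at the midpoint $m = \tfrac12(x+x')$ with increments $\pm v$ against the function values at $x$ and $x'$; more precisely, from $F(m+v)\geq F(x') + DF(x')(m+v-x')$ and $F(m-v)\geq F(x) + DF(x)(m-v-x)$ together with $F(m\pm v)\leq F(m) + DF(m)(\pm v) + A\varphi_\omega(\|v\|)$ one obtains, after the same algebra that produced \eqref{eq:estimateLipomegawithinfimumovert}, the inequality
$$
\|DF(x')-DF(x)\|_* \leq \inf_{t>0} \frac{\varphi_\omega\bigl((\tfrac12 + t)\|x-x'\|\bigr)}{t\,\|x-x'\|} \cdot 2A \cdot (\text{something}),
$$
and then the estimates $\varphi_\omega(s)\leq s\,\omega(s/2)$ from Proposition \ref{prop:inequalities_omega_varphiomega}, optimized over $t$ (taking $t$ up to $3/2$ in the general case, and $t_0 = 1/(2\alpha)$ in the Hölder case), give $\lip_\omega(DF,X)\leq \tfrac43 A$ and $\lip_\alpha(DF,X)\leq \bigl(\tfrac{1+\alpha}{2\alpha}\bigr)^\alpha A$ respectively. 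Finally, having a genuine modulus of continuity for $x\mapsto DF(x)$ together with Gâteaux differentiability, standard convex analysis (e.g.\ \cite[Corollary 4.2]{DGZ93}-type arguments, or directly: the two-sided estimate shows $\|F(x+h)-F(x)-DF(x)(h)\|\leq A\varphi_\omega(\|h\|) = o(\|h\|)$ uniformly in the direction) promotes this to Fréchet differentiability, and continuity of $DF$ is immediate from the modulus bound.

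**Main obstacle.** The delicate point is not the arithmetic but making the passage from \eqref{eq:unfiformsmoothnessF_inBanach} to pointwise differentiability rigorous without circularity: one must first know $F$ is continuous (local Lipschitzness of finite convex functions on Banach spaces) to have directional derivatives at all, then check that the quantity $p(x,h)$ is linear in $h$ — linearity in positive scalars is automatic, but additivity $p(x,h_1+h_2)=p(x,h_1)+p(x,h_2)$ needs the sandwich from both inequalities above, and boundedness $|p(x,h)|\leq \lip(F,\text{nbhd})\|h\|$ gives continuity of the linear functional, hence $DF(x)\in X^*$. The cleanest route is to invoke that a convex function which is Gâteaux differentiable with norm-continuous derivative is automatically Fréchet differentiable of class $C^1$ (this is exactly what is cited from \cite[Lemma V.3.5]{DGZ93} / \cite[Proposition 4.5]{ALeGM18} in the excerpt), so that the only genuinely new content to verify carefully is the quantitative chain yielding the constants $\tfrac43$ and $\bigl(\tfrac{1+\alpha}{2\alpha}\bigr)^\alpha$, which mirrors the proof of Theorem \ref{thm:relationsAseminorm_Lip} almost verbatim with $2A$ in place of the combined $\varphi_\omega$ terms there.
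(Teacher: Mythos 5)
Your proposal is correct and follows essentially the paper's own route: the paper likewise derives differentiability from \eqref{eq:unfiformsmoothnessF_inBanach} (citing \cite[Proposition 4.5]{ALeGM18} for convex, lower semicontinuous functions), obtains $A_{\omega,\co}(F,DF,X)\leq A$ by dividing by $\lambda$, letting $\lambda\to 0$, and combining the resulting Taylor upper bound with convexity, and then gets the $\tfrac{4}{3}A$ and $\bigl(\tfrac{1+\alpha}{2\alpha}\bigr)^{\alpha}A$ bounds by simply applying Theorem \ref{thm:relationsAseminorm_Lip} to the jet $(F,DF)$ on $E=X$ --- exactly the computation you propose to repeat. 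Two minor touch-ups: local Lipschitz continuity of $F$ needs the lower semicontinuity hypothesis, not mere finiteness (a discontinuous linear functional on an infinite-dimensional space satisfies \eqref{eq:unfiformsmoothnessF_inBanach}), and once $A_{\omega,\co}(F,DF,X)\leq A$ is in hand you need not rerun the midpoint/optimization device (your displayed bound with the factor ``$2A\cdot(\text{something})$'' is garbled as written); Theorem \ref{thm:relationsAseminorm_Lip} applies verbatim.
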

\begin{proof}
Note that letting $\lambda=1/2$ and replacing $h$ with $2h,$ \eqref{eq:unfiformsmoothnessF_inBanach} gives
$$
 F(x+h) +  F(x- h) - F(x) \leq  \frac{A}{2}  \varphi_\omega(\|2h\|), \quad  x,h \in X. 
$$
Diving by $\|h\|$ in this inequality, and using Proposition \ref{prop:inequalities_omega_varphiomega}, we get 
$$
 \frac{F(x+h) +  F(x- h) - F(x)}{\|h\|} \leq \frac{A}{2} \frac{\varphi_\omega(2\|h\|)}{\|h\|} \leq A \omega(\|h\|),
$$
where the last term tends to $0$ as $\|h\| \to 0.$ Because $F$ is convex and lower semicontinuous in $X$, this implies that $F$ is Fréchet differentiable at every $z\in X;$ see \cite[Proposition 4.5]{ALeGM18} for a proof of this well-known result.

Let us now prove that $A_{\omega, \co}(F,DF,X) \leq A.$ Dividing by $\lambda \in (0,1)$ in \eqref{eq:unfiformsmoothnessF_inBanach}, we get the inequality
$$
F(x+(1-\lambda)h)-F(x-\lambda h ) + \frac{1}{\lambda}\left( F(x-\lambda h)-F(z) \right) \leq (1-\lambda) \cdot A \cdot \varphi_\omega(\|h\|),
$$
for all $\lambda \in (0,1),$ $x,h\in X.$ Letting $\lambda \to 0,$ the differentiability (and continuity) of $F$ in $X$ gives
$$
F(x+h)-F(x)-DF(x)(h) \leq A \cdot \varphi_\omega(\|h\|), \quad x,h\in X.
$$
In other words,
$$
F(x) \leq F(y) + DF(y)(x-y) + A \cdot \varphi_\omega(\|x-y\|), \quad x,y\in X.
$$
The convexity of $F$ then leads us to
$$
F(z)+ DF(z)(x-z) \leq F(x) \leq  F(y) + DF(y)(x-y) + A \cdot \varphi_\omega(\|x-y\|), \quad x,y,z\in X.
$$
This clearly shows that $A_{\omega,\co}(F,DF,X) \leq A.$ Finally, the estimate for $\lip_\omega(DF,X)$ in terms of $A$ is a direct consequence of Theorem \ref{thm:relationsAseminorm_Lip}. 
\end{proof}

\section{Lipschitz and convex envelopes: formulas and regularity}\label{sect:formulae_convexenvelopes}

In this section we prove various formulas for the Lipschitz and convex envelopes $\conv_L(g)$ we used in our theorems, and collect the (already known) formulas for the standard convex envelope $\conv(g).$

\medskip

Throughout this section, let $(X, \| \cdot\|)$ be a Banach space, let $\omega$ a modulus of continuity, let $L>0,$ and a function $g : X \to \R$ having a continuous convex minorant in $X.$ Let us denote
$$
F:= \conv(g), 
$$
as in \eqref{eq:definition_convexenvelope}. The following formulas for $\conv(g)$ are well-known:
\begin{align}\label{eq:threeformulas_conv}
F(x) :  &  =  \sup\{ h(x) \, : \, h : X \to \R \, \textrm{ is convex and continuous, } \, h\leq g \text{ on } X\} \nonumber \\
&   =  \sup\{ h(x) \, : \, h : X \to \R \, \textrm{ is affine and continuous, } \, h\leq g \text{ on } X\} \nonumber \\
& = \inf\left\lbrace \sum_{j=1}^{n}\lambda_{j} g(x_j) \, : \, \lambda_j\geq 0,
\: \sum_{j=1}^{n}\lambda_j =1, \, x_j \in X, \: x=\sum_{j=1}^{n}\lambda_j x_j, \: n\in\N \right\rbrace, \quad x\in X.
\end{align}
The first two formulas are the same simply because every continuous convex functions can be written as a sumpremum of affine and continuous functions. For the identity between the first and third formula, it is sufficient to notice that term in \eqref{eq:threeformulas_conv} defines a convex function on $x\in X.$

In the special case $X= \R^n$, Carathéodory's Theorem implies that it is enough to consider convex combinations of at most $n+1$ points in formula \eqref{eq:threeformulas_conv}, and so
\begin{equation}\label{eq:formulaconv_inRn}
    F(x) = \inf\left\lbrace \sum_{j=1}^{n+1}\lambda_{j} g(x_j) \, : \, \lambda_j\geq 0,
\: \sum_{j=1}^{n+1}\lambda_j =1, \, x_j \in \R^n \: x=\sum_{j=1}^{n+1}\lambda_j x_j \right\rbrace, \quad x\in \R^n.
\end{equation}

\medskip

Now assume further that $g$ has an $L$-Lipschitz and convex minorant in $X$, and define
$$
F_L:= \conv_L(g)
$$
as in the definition \eqref{eq:definition_convex_L_envelope}. Using that every convex and $L$-Lipschitz function $h: X \to \R$ can be written as a supremum of affine and $L$-Lipschitz functions, we get that:
\begin{align*}
F_L(x) :   & =  \sup\{ h(x) \, : \, h : X \to \R \textrm{ is convex and $L$-Lipschitz, } h\leq g \text{ on } X\} \\
& = \sup\{ \ell(x) \, : \, \ell : X \to \R \textrm{ is affine and $L$-Lipschitz, } \ell \leq g \text{ on } X\},  \quad x\in X.
\end{align*}

However, we are interested in a formula similar to \eqref{eq:threeformulas_conv} for the Lipschitz and convex envelope $F_L$, which allows for a direct computation or approximation of the value $F_L(x)$ solely using the function $g.$ The desired formula is established in the next lemma. 

\begin{lemma}\label{lem:formulae_convL}
If $X$ is a Banach space, we have, for all $x\in X$, that
\begin{align*}
F_L(x) & = \inf\lbrace F(y) + L \|x-y\| \, : \, y\in X \rbrace \\
& =   \inf\left\lbrace L \|x-y\| + \sum_{j=1}^{n}\lambda_{j} g(y_j) \: : \: \lambda_j\geq 0, \:
\sum_{j=1}^{n}\lambda_j =1, \: y=\sum_{j=1}^{n}\lambda_j y_j, \: y_j \in X, \: n\in\N, \: y\in X \right\rbrace.
\end{align*}
\end{lemma}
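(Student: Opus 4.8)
The plan is to prove the two displayed identities for $F_L(x) = \conv_L(g)(x)$ in sequence, using the already-established three-formula description of $\conv(g)$ in \eqref{eq:threeformulas_conv} to pass from the first to the second. Write $G_1(x) := \inf\{ F(y) + L\|x-y\| : y\in X\}$, where $F = \conv(g)$, and let $G_2(x)$ denote the infimum over convex combinations appearing on the second line of the lemma. I would first check that $G_1 = G_2$: plugging in the third formula of \eqref{eq:threeformulas_conv} for $F(y)$ and noting that the infimum over all representations $y = \sum \lambda_j y_j$ of all points $y$ is the same as the combined infimum over the $y_j$'s and $\lambda_j$'s directly (the variable $y$ being merely an abbreviation for $\sum \lambda_j y_j$), one sees $G_1(x) = G_2(x)$ immediately. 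So the substantive content is the single identity $F_L = G_1$.

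For the inclusion $F_L \leq G_1$: I would show that the function $x \mapsto F(y) + L\|x-y\|$ is, for each fixed $y$, convex and $L$-Lipschitz on $X$, and lies below $g$ — indeed at $x = y$ it equals $F(y) \leq g(y)$, and more generally $F(y) + L\|x-y\| \leq F(x) \leq g(x)$ using that $F$ is itself $L$-Lipschitz (which holds because $g$ has an $L$-Lipschitz convex minorant $\ell$, so $F \geq \ell$, and $F = \conv(g)$ is a supremum of $L$-Lipschitz affine functions hence $L$-Lipschitz). Actually the cleanest route: each $h_y(x) := F(y) + L\|x-y\|$ is convex, $L$-Lipschitz, and $\leq g$ (since $h_y \leq F \leq g$ by the $L$-Lipschitz bound on $F$), so by the definition \eqref{eq:definition_convex_L_envelope} of $\conv_L(g)$ as the supremum of such functions, $h_y \leq F_L$ pointwise — wait, that gives the wrong direction. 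Let me restate: $F_L$ is the \emph{supremum} of convex $L$-Lipschitz functions below $g$, so $F_L \geq h_y$ for every $y$ would give $F_L \geq \sup_y h_y \geq h_x = F(x)$, not helpful directly. The correct argument for $F_L \leq G_1$ is: take any convex $L$-Lipschitz $h \leq g$; then $h \leq F = \conv(g)$ (as $F$ is the largest convex continuous minorant), and for any $y$, $h(x) \leq h(y) + L\|x-y\| \leq F(y) + L\|x-y\|$; taking the inf over $y$ gives $h(x) \leq G_1(x)$, and taking the sup over all such $h$ gives $F_L \leq G_1$.

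For the reverse inclusion $G_1 \leq F_L$: I would verify that $G_1$ is itself a convex, $L$-Lipschitz function with $G_1 \leq g$, whence $G_1 \leq F_L$ by maximality of $F_L$ in the class. That $G_1 \leq g$ follows by taking $y = x$ in the infimum and using $F(x) \leq g(x)$. That $G_1$ is $L$-Lipschitz is the standard inf-convolution estimate: $G_1(x) - G_1(x') \leq \sup_y \big( F(y) + L\|x-y\| - F(y) - L\|x'-y\| \big) \leq L\|x-x'\|$ by the triangle inequality. The one genuinely nontrivial point — and the main obstacle — is the \emph{convexity} of $G_1$, i.e. that the inf-convolution of the convex function $F$ with the convex function $L\|\cdot\|$ is convex; this is classical (inf-convolution of convex functions is convex), and I would prove it by the standard argument: given $x_0, x_1$ and $\varepsilon > 0$, pick near-optimal $y_0, y_1$, then for $x_t = (1-t)x_0 + tx_1$ test the point $y_t = (1-t)y_0 + ty_1$ in the infimum defining $G_1(x_t)$, using convexity of $F$ and of the norm to bound $F(y_t) + L\|x_t - y_t\| \leq (1-t)(F(y_0)+L\|x_0-y_0\|) + t(F(y_1)+L\|x_1-y_1\|) \leq (1-t)G_1(x_0) + tG_1(x_1) + \varepsilon$, then let $\varepsilon \to 0$. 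I should also note in passing that $G_1$ is finite-valued (bounded below by the $L$-Lipschitz affine minorant $\ell$ of $g$, since $F \geq \ell$ forces $F(y) + L\|x-y\| \geq \ell(y) + L\|x-y\| \geq \ell(x)$) and continuous (being real-valued and $L$-Lipschitz), so it genuinely belongs to the competing class. Combining the two inclusions yields $F_L = G_1 = G_2$, which is the claim.
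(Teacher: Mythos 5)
Your proposal is correct and takes essentially the same route as the paper's proof: identify the two infimum formulas by substituting the convex-combination formula \eqref{eq:threeformulas_conv} for $F(y)$, then prove $F_L=G_1$ by showing that $G_1$ is finite, convex, $L$-Lipschitz and $\leq g$ (hence $G_1\leq F_L$ by maximality) and that any convex $L$-Lipschitz $h\leq g$ satisfies $h\leq F$ and $h(x)\leq h(y)+L\|x-y\|\leq F(y)+L\|x-y\|$ (the paper runs this last step with $F_L$ itself in place of a generic competitor $h$, an immaterial difference). The only blemish is the discarded aside asserting $h_y\leq F$ with $h_y(x)=F(y)+L\|x-y\|$ --- that inequality actually runs the other way when $F$ is $L$-Lipschitz --- but since you flagged it and replaced it with the correct argument, the final proof stands.
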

\begin{proof}
Bearing in mind the infimum-type formula for $F(y) = \conv(g)(y)$ with convex combinations $\sum_{j=1}^n \lambda_j y_j=y$, the identity between the second and the third expressions is immediate. Therefore, we only need to show that
$$
F_L(x) = \inf\lbrace F(y) + L \|x-y\| \, : \, y\in X \rbrace : = H(x) , \quad x\in X.
$$
Let $m$ be an $L$-Lipschitz and convex minorant of $g$ in $X.$ In particular $m$ is convex and, by the definition of $F$ (as the largest convex function lying below $g$), we have $F \geq m$ everywhere in $X.$ Thus
$$
F(y) + L \|x-y\| \geq m(y) + L \|x-y\| \geq m(x),
$$
for all $x,y\in X.$ in the last inequality, we have used that $m$ is $L$-Lipschitz. This shows that $H(x) > - \infty$ for all $x\in X.$ Also, it is obvious that

Therefore the function $H$ is, as a finite infimum of $L$-Lipschitz functions, $L$-Lipschitz in $X$ as well. Let us now show that $H : X \to \R$ is convex in $X.$ Indeed, let $x,z\in X,$ and $\lambda \in [0,1].$ By the definitions of $H(x)$ and $H(z)$, for every $\varepsilon>0,$ we can find $y_x, y_z \in X$ with
$$
H(x) \geq F(y_x) + L\|x-y_x\| - \varepsilon, \quad H(z) \geq F(y_z)  + L\|z-y_z\| + \varepsilon.
$$
Considering the point $\lambda y_x + (1-\lambda) y_z \in X,$ the definition of $H(\lambda  x + (1-\lambda)  z)$, the convexity of $F$ in $X$ and the choices of $y_x$ and $y_z$ give
\begin{align*}
H(\lambda y_x + (1-\lambda) y_z) & \leq F(\lambda y_x + (1-\lambda) y_z ) + L\| \lambda  x + (1-\lambda)  z- (\lambda  y_x + (1-\lambda) y_z) \|\\
& \leq \lambda F(y_x) + (1-\lambda) F(y_z) + L \lambda \|x-y_x\| + L (1-\lambda) \|z-y_z\| \\
& \leq \lambda \left( F(y_x) + L\|x-y \| \right) + (1-\lambda) \left( F(y_z) + L\|z-y_z\| \right) \\
& \leq \lambda (H(x)+\varepsilon) + (1-\lambda) ( H(z) + \varepsilon ) \\
& = \lambda H(x) + (1-\lambda) H(z) + \varepsilon.
\end{align*}
Since $\varepsilon>0$ is arbitrary, the above shows that $H$ is convex in $X.$ Therefore, $H: X \to \R$ is convex and $L$-Lipschitz. Also, clearly from the definition of $H,$ one has $H(x) \leq F(x) \leq g(x).$ Thus, $H$ is one of the competitors in the definition of $F_L = \conv_L(g),$ implying that $F_L \geq H$ in $X.$

For the reverse inequality, note that $F_L$ is an $L$-Lipschitz and convex function, as a supremum of $L$-Lipschitz and convex functions. And obviously $F_L \leq F$ on $X$ by their definitions. Consequently,
$$
F_L(x) \leq F_L(y) + L \|x-y\| \leq F(y) + L\|x-y\|
$$
for all $x,y\in X.$ Taking the infimum over $y\in X$ in the right hand side, we conclude $F_L(x) \leq H(x),$ as desired.
\end{proof}

As a corollary of Lemma \ref{lem:formulae_convL} and \eqref{eq:formulaconv_inRn}, we get a simplified version in the finite-dimensional setting.

\begin{corollary}\label{cor:formulae_convL_inRn}
If $X= \R^n$, then the following formula holds for $F_L:$ 
$$ 
F_L(x) = \inf\left\lbrace L \|x-y\| + \sum_{j=1}^{n+1}\lambda_{j} g(y_j) \: : \: \lambda_j\geq 0, \:
\sum_{j=1}^{n+1}\lambda_j =1, \: y=\sum_{j=1}^{n+1}\lambda_j y_j, \: y_j, y \in \R^n \right\rbrace, \quad x\in \R^n.
$$
\end{corollary}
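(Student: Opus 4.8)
The plan is to obtain the formula by combining Lemma~\ref{lem:formulae_convL} with the finite-dimensional Carath\'eodory representation \eqref{eq:formulaconv_inRn} of the ordinary convex envelope. Since $g$ admits an $L$-Lipschitz convex minorant in $\R^n$ (hence also a continuous convex, and in particular affine, minorant), both $F=\conv(g)$ and $F_L=\conv_L(g)$ are well-defined real-valued functions, and Lemma~\ref{lem:formulae_convL} gives the first reduction
$$
F_L(x)=\inf\{\,F(y)+L\|x-y\|\,:\,y\in\R^n\,\},\qquad x\in\R^n .
$$

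Next I would substitute into this expression the formula \eqref{eq:formulaconv_inRn} for $F(y)=\conv(g)(y)$. Writing $F(y)$ as the infimum, over all representations $y=\sum_{j=1}^{n+1}\lambda_j y_j$ with $\lambda_j\geq0$, $\sum_{j}\lambda_j=1$, $y_j\in\R^n$, of $\sum_{j=1}^{n+1}\lambda_j g(y_j)$, one arrives at an iterated infimum: an outer infimum over $y\in\R^n$ and, for each $y$, an inner infimum over its convex representations by at most $n+1$ points. It then remains only to collapse this to a single infimum, using the elementary identity $\inf_{y}\bigl(a(y)+\inf_{z\in S_y}b(z)\bigr)=\inf\{\,a(y)+b(z):y,\,z\in S_y\,\}$ with $a(y)=L\|x-y\|$; after the merge the variable $y$ survives only as the abbreviation $y=\sum_{j=1}^{n+1}\lambda_j y_j$, and we recover exactly
$$
F_L(x)=\inf\left\{\,L\|x-y\|+\sum_{j=1}^{n+1}\lambda_j g(y_j)\,:\,\lambda_j\geq0,\ \sum_{j=1}^{n+1}\lambda_j=1,\ y=\sum_{j=1}^{n+1}\lambda_j y_j,\ y_j,y\in\R^n\,\right\}.
$$

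I do not expect any genuine obstacle: all the substance sits in Lemma~\ref{lem:formulae_convL} (where the nontrivial identity $\conv_L(g)(x)=\inf_y\bigl(\conv(g)(y)+L\|x-y\|\bigr)$ was established) and in Carath\'eodory's theorem applied to the epigraph of $g$, which underlies \eqref{eq:formulaconv_inRn} and is the only point at which $X=\R^n$ enters. A minor variant, if one prefers not to cite \eqref{eq:formulaconv_inRn}, is to start instead from the second (dimension-free) identity of Lemma~\ref{lem:formulae_convL}, which is already the claimed formula but with an arbitrary number of points $y_j$, and then apply the Carath\'eodory reduction directly to the convex combinations appearing there so as to lower their number to $n+1$; the bookkeeping is identical.
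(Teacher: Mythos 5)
Your proposal is correct and follows exactly the route the paper intends: the corollary is obtained by inserting the Carath\'eodory-based formula \eqref{eq:formulaconv_inRn} for $\conv(g)$ into the identity $F_L(x)=\inf_{y}\{\conv(g)(y)+L\|x-y\|\}$ from Lemma \ref{lem:formulae_convL} and merging the two infima. Nothing is missing.
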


The formula of Lemma \ref{lem:formulae_convL} allows to prove the following regularity result for the Lipschitz and convex envelope. 

\begin{lemma}\label{lem:proofregularity_convL_withinfimumformula}
Let $\omega$ be a modulus of continuity and assume that there is $\kappa>0$ so that $g$ satisfies the estimate
$$
\lambda g(x+(1-\lambda)h) + (1-\lambda) g(x-\lambda h) - g(x) \leq  \lambda (1-\lambda) \cdot \kappa \cdot \varphi_\omega(\|h\|)
$$
for all $\lambda \in [0,1]$, $x,h \in X.$ Then the same estimate holds with $F_L$ in place of $g.$
\end{lemma}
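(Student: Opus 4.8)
The plan is to deduce the estimate for $F_L$ from the corresponding estimate for the ordinary convex envelope $F=\conv(g)$, and to prove the latter directly from the convex-combination formula \eqref{eq:threeformulas_conv}. Throughout, rewrite the hypothesis in the equivalent ``approximate convexity'' form: for all $u,v\in X$ and $\lambda\in[0,1]$,
\begin{equation*}
\lambda g(u)+(1-\lambda)g(v)-g(\lambda u+(1-\lambda)v)\le \lambda(1-\lambda)\,\kappa\,\varphi_\omega(\|u-v\|),
\end{equation*}
obtained by setting $u=x+(1-\lambda)h$, $v=x-\lambda h$, so that $x=\lambda u+(1-\lambda)v$ and $h=u-v$. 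Conversely, the target inequality for $F_L$ is exactly this approximate-convexity inequality with $g$ replaced by $F_L$.

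\emph{Step 1: $F=\conv(g)$ satisfies the inequality.} Fix $x,h\in X$, $\lambda\in[0,1]$ and $\varepsilon>0$. By the third formula in \eqref{eq:threeformulas_conv}, choose a convex combination $x=\sum_i\theta_i x_i$ with $\sum_i\theta_i g(x_i)\le F(x)+\varepsilon$. For each $i$ put $a_i=x_i+(1-\lambda)h$ and $b_i=x_i-\lambda h$, so that $\lambda a_i+(1-\lambda)b_i=x_i$ and $a_i-b_i=h$. Applying the hypothesis to $g$ at the pair $(a_i,b_i)$ gives $\lambda g(a_i)+(1-\lambda)g(b_i)\le g(x_i)+\lambda(1-\lambda)\kappa\varphi_\omega(\|h\|)$; summing with weights $\theta_i$, and using that $\sum_i\theta_i a_i=x+(1-\lambda)h$ and $\sum_i\theta_i b_i=x-\lambda h$ are convex combinations, hence $\sum_i\theta_i g(a_i)\ge F(x+(1-\lambda)h)$ and $\sum_i\theta_i g(b_i)\ge F(x-\lambda h)$ by \eqref{eq:threeformulas_conv}, we obtain
\begin{equation*}
\lambda F(x+(1-\lambda)h)+(1-\lambda)F(x-\lambda h)\le F(x)+\varepsilon+\lambda(1-\lambda)\kappa\varphi_\omega(\|h\|).
\end{equation*}
Letting $\varepsilon\to0$ proves that $F$ satisfies the desired estimate. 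The point is that all points $x_i$ of the representation of $x$ are translated by the \emph{same} vectors $(1-\lambda)h$ and $-\lambda h$; this is what keeps the defect term equal to $\lambda(1-\lambda)\kappa\varphi_\omega(\|h\|)$ rather than an uncontrolled average of $\varphi_\omega(\|a_i-b_j\|)$.

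\emph{Step 2: pass from $F$ to $F_L$.} Use the formula $F_L(x)=\inf\{F(y)+L\|x-y\|:y\in X\}$ from Lemma \ref{lem:formulae_convL}. Fix $x,h,\lambda,\varepsilon$ and pick $y\in X$ with $F(y)+L\|x-y\|\le F_L(x)+\varepsilon$. Since $(x+(1-\lambda)h)-(y+(1-\lambda)h)=x-y$ and $(x-\lambda h)-(y-\lambda h)=x-y$, Lemma \ref{lem:formulae_convL} gives $F_L(x+(1-\lambda)h)\le F(y+(1-\lambda)h)+L\|x-y\|$ and $F_L(x-\lambda h)\le F(y-\lambda h)+L\|x-y\|$. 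Combining these with Step 1 applied to $F$ at the point $y$ with the same $h$ yields
\begin{equation*}
\lambda F_L(x+(1-\lambda)h)+(1-\lambda)F_L(x-\lambda h)\le \bigl(F(y)+L\|x-y\|\bigr)+\lambda(1-\lambda)\kappa\varphi_\omega(\|h\|)\le F_L(x)+\varepsilon+\lambda(1-\lambda)\kappa\varphi_\omega(\|h\|),
\end{equation*}
and letting $\varepsilon\to0$ finishes the proof. (Finiteness of $F$ and $F_L$, needed for these infima to be meaningful, is guaranteed by the standing assumption that $g$ has an $L$-Lipschitz convex minorant, as recorded in Lemma \ref{lem:formulae_convL}.)

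\emph{Expected main obstacle.} The only real subtlety is the temptation to prove Step 1 by choosing \emph{independent} near-optimal representations of $F(x+(1-\lambda)h)$ and $F(x-\lambda h)$ and recombining them: because $t\mapsto\varphi_\omega(\|t\|)$ is convex, Jensen's inequality then pushes the defect term in the wrong direction and the argument breaks. Starting instead from a representation of $F(x)$ and translating it rigidly by $\pm$ multiples of $h$ is the decisive move; everything else is bookkeeping with the infimum formulas and $\varepsilon$'s.
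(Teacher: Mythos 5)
Your proof is correct and rests on the same key idea as the paper's: take an $\varepsilon$-optimal representation of the value at the centre point $x$ and translate it rigidly by $(1-\lambda)h$ and $-\lambda h$, observing that the Lipschitz term $L\|x-y\|$ is unchanged under this translation, so the defect stays $\lambda(1-\lambda)\kappa\varphi_\omega(\|h\|)$. The only difference is organizational: the paper applies this once to the combined formula $F_L(x)=\inf\{L\|x-y\|+\sum_j\lambda_j g(y_j)\}$ from Lemma \ref{lem:formulae_convL}, whereas you factor through $F=\conv(g)$ (your Step 1 reproduces the computation the paper performs in Lemma \ref{lem:regularity_convg_general}) and then pass to $F_L$ via the infimal-convolution formula $F_L(x)=\inf_y\{F(y)+L\|x-y\|\}$, which yields the same estimate.
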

\begin{proof}
We will use the last formula of Lemma \ref{lem:formulae_convL} for $F_L.$ Let $\lambda \in [0,1]$, $x,h \in X$. For every $\varepsilon>0,$ we can find $y\in X,$ $n\in \N,$ $\lbrace y_j \rbrace_{j=1}^n \subset X$, $\lbrace \lambda_j \rbrace_{j=1}^n \geq 0$ with $\sum_{j=1}^n \lambda_j=1$, $\sum_{j=1}^n \lambda_j y_j= y$ and
$$
F_L(x) \geq L\|x-y\| + \sum_{j=1}^n \lambda_j g(y_j) - \varepsilon.
$$
We have that $\sum_{j=1}^n \lambda_j(y_j-\lambda h) =y-\lambda h$ and $\sum_{j=1}^n \lambda_j(y_j+(1-\lambda) h) = y+ (1-\lambda)h.$ Therefore using the same $\inf$-type formula for $F_L(x-\lambda h)$ and $F(x+(1-\lambda)h),$ one has
$$
F_L(x-\lambda h) \leq L\| x- \lambda h - (y-\lambda h ) \| + \sum_{j=1}^n \lambda_j g(y_j -\lambda h) = L \|x-y\| + \sum_{j=1}^n \lambda_j g(y_j -\lambda h) ,
$$
\begin{align*}
F_L(x+(1-\lambda) h) & \leq L\| x+ (1-\lambda) h - (y+(1-\lambda) h ) \| + \sum_{j=1}^n \lambda_j g(y_j +(1-\lambda) h) \\
& = L \|x-y\| + \sum_{j=1}^n \lambda_j g(y_j +(1-\lambda) h).
\end{align*}
Putting all the pieces together and using the assumption on $g,$ we have
\begin{align*}
 \lambda F_L(x+(1-\lambda)h)   & + (1-\lambda) F_L(x-\lambda h) - F_L(x)   \\
& \leq \lambda  [ L \|x-y\| + \sum_{j=1}^n \lambda_j g(y_j -\lambda h)   ]  + (1-\lambda) [ L \|x-y\| + \sum_{j=1}^n \lambda_j g(y_j +(1-\lambda) h) ]  \\
& \quad  - L\|x-y\| - \sum_{j=1}^n \lambda_j g(y_j) + \varepsilon \\
& =\sum_{j=1}^n \lambda_ j \left[ \lambda g(y_j+(1-\lambda)h) + (1-\lambda) g(y_j-\lambda h) - g(y_j) \right] + \varepsilon \\
& \leq \sum_{j=1}^n \lambda_ j \kappa \varphi_\omega(\|h\|) + \varepsilon = \kappa \varphi_\omega(\|h\|) + \varepsilon.
\end{align*}
Because $\varepsilon>0$ is arbitrary, we get the desired estimate for $F_L.$

\end{proof}

\section{A key general result. Proofs of Theorems \ref{thm:mainsuperreflexive}, \ref{thm:mainHilbert}, and \ref{thm:mainHilbertHoldercase}}\label{sect:generalresult_prooffirstmaintheorems}

Theorems \ref{thm:mainsuperreflexive}, \ref{thm:mainHilbert}, and \ref{thm:mainHilbertHoldercase} are a consequence of a general extension theorem for $C^{1,\omega}$ convex functions in general Banach spaces, in combination with results on the regularity of the norm in the pertinent space. We now state and prove this general result, where the space is assumed to have certain $C^{1,\omega}$-smoothness.

\begin{theorem}\label{thm:technicalgeneraltheorem}
Let $(X,\| \cdot\|)$ be a Banach space, and let $\omega$ be a modulus of continuity so that the function $\psi_\omega = \varphi_\omega \circ \| \cdot \| : X \to \R$ satisfies
\begin{equation}\label{eq:assumptiononregularitypsiomega_generaltheorem}
\lambda \psi_\omega(x+(1-\lambda)h) + (1-\lambda) \psi_\omega(x-\lambda h) - \psi_\omega(z) \leq \lambda (1-\lambda) \cdot K \cdot \varphi_\omega(\|h\|), \quad \lambda \in [0,1], \, z,h \in X.
\end{equation}
Let $E \subset X$ and $(f,G)$ a jet in $E.$ Then there exists $F\in C^{1,\omega}(X)$ convex with $(F,D F) = (f,G)$ on $E$ if and only if $(f,G)$ satisfies the condition \eqref{eq:definitionCw1omega_forallx} for some $M>0.$ Moreover, the formula
\begin{equation}\label{eq:formulaextensionC1w_generaltheorem}
F:= \conv(g), \quad g(x) = \inf_{y\in E} \lbrace f(y) + G(y)(x-y) + M \varphi_\omega(\|x-y\|) \rbrace, \quad x\in X
\end{equation}
defines such an extension, and
\begin{equation}\label{eq:estimate_extensionoperator_generaltheorem}
A_{\omega, \co}(F,DF,X) \leq K \cdot M  \quad \text{and} \quad \lip_\omega(DF,X) \leq \frac{4}{3} K \cdot M;
\end{equation}
where $K$ is that of \eqref{eq:assumptiononregularitypsiomega_generaltheorem}.

Furthermore, if in addition $G: E \to X^*$ is bounded and $L:=\sup_{z\in E} \|G(z)\|_*$, then the function
$$
F_L:= \conv_L(g)
$$
has the same properties as those of $F$, with the additional feature that
\begin{equation}\label{eq:mainthereomsuperrefl_Lipschitzpresserved}
  \sup_{x\in X} \|DF_L(x) \|_* = \lip(F_L) = L = \sup_{z\in E} \|G(z)\|_*
\end{equation}
\end{theorem}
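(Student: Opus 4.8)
The plan is to prove the theorem in three stages: necessity of \eqref{eq:definitionCw1omega_forallx}, the extension for arbitrary $G$, and the refinement giving the sharp Lipschitz constant. For \textbf{necessity}, suppose $F \in C^{1,\omega}(X)$ is convex with $(F, DF) = (f,G)$ on $E$. Then for $y \in E$, $x \in X$ the differentiability and $\omega$-continuity of $DF$ give $F(x) \le F(y) + DF(y)(x-y) + \lip_\omega(DF,X)\,\varphi_\omega(\|x-y\|)$ (integrate the estimate on $\|DF(y+t(x-y)) - DF(y)\|_*$), and convexity gives $F(z) + G(z)(x-z) \le F(x)$ for $z \in E$. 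Chaining these and recalling $(F,DF)|_E = (f,G)$ yields \eqref{eq:definitionCw1omega_forallx} with any $M \ge \lip_\omega(DF,X)$. (This is essentially Lemma \ref{lem:necessityofCW1omega}/Lemma \ref{lem:seminormAforextendedfunctions}.)

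For \textbf{sufficiency with arbitrary $G$}, assume $(f,G)$ satisfies \eqref{eq:definitionCw1omega_forallx} with constant $M > 0$ and set $g(x) = \inf_{y \in E}\{f(y) + G(y)(x-y) + M\varphi_\omega(\|x-y\|)\}$, $F = \conv(g)$. First I would check $g$ is well-defined and real-valued: \eqref{eq:definitionCw1omega_forallx} says that for every $z \in E$ the affine-plus-parabola term in the infimum dominates the affine function $f(z) + G(z)(\cdot - z)$, so $g \ge f(z) + G(z)(\cdot - z)$ for each fixed $z$, giving $g$ a continuous affine minorant, hence $F = \conv(g)$ is well-defined, convex, and still $\ge f(z) + G(z)(\cdot - z)$ for all $z \in E$. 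Since $g(z) \le f(z)$ trivially (take $y = z$ in the infimum), we get $F(z) \le g(z) \le f(z)$, while the affine minorant evaluated at $z$ gives $F(z) \ge f(z)$; hence $F = f$ on $E$. The regularity is the heart: I claim $g$ satisfies the uniform smoothness estimate of Lemma \ref{lem:convexcombregularity_impliesC1omega} with constant $A = KM$. Fix $x, h \in X$, $\lambda \in [0,1]$; for $\varepsilon > 0$ pick $y \in E$ nearly attaining $g(x)$. Using the \emph{same} $y$ as a competitor for $g(x + (1-\lambda)h)$ and $g(x - \lambda h)$, the affine terms combine exactly (they are affine, so $\lambda[\cdots] + (1-\lambda)[\cdots] - [\cdots] = 0$ for the $f(y) + G(y)(\cdot - y)$ part), and the parabola terms produce exactly $M\big(\lambda\psi_\omega(\tilde x + (1-\lambda)h) + (1-\lambda)\psi_\omega(\tilde x - \lambda h) - \psi_\omega(\tilde x)\big)$ with $\tilde x = x - y$, which by hypothesis \eqref{eq:assumptiononregularitypsiomega_generaltheorem} is $\le KM\,\lambda(1-\lambda)\varphi_\omega(\|h\|)$. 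Letting $\varepsilon \to 0$ gives the estimate for $g$, and Lemma \ref{lem:convexcombregularity_impliesC1omega} (applied after noting the estimate passes to $\conv(g) = F$ — this is exactly Lemma \ref{lem:proofregularity_convL_withinfimumformula} with $L = \infty$, or a direct convex-combination argument as in that proof) shows $F \in C^{1,\omega}(X)$ with $A_{\omega,\co}(F,DF,X) \le KM$ and $\lip_\omega(DF,X) \le \frac{4}{3}KM$. The only remaining point is $DF = G$ on $E$: at $z \in E$ we have the affine minorant $f(z) + G(z)(\cdot - z) \le F \le g$ with equality at $z$, so $G(z)$ is a subgradient of the differentiable convex $F$ at $z$, forcing $DF(z) = G(z)$.

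For the \textbf{sharp Lipschitz constant} when $L := \sup_{z\in E}\|G(z)\|_* < \infty$, set $F_L = \conv_L(g)$. One checks $g$ admits an $L$-Lipschitz convex minorant: by \eqref{eq:definitionCw1omega_forallx} each affine function $\ell_z := f(z) + G(z)(\cdot-z)$ satisfies $\ell_z \le g$, is convex, and is $\|G(z)\|_* \le L$ Lipschitz; fixing one $z_0$, $\ell_{z_0}$ works. So $\conv_L(g)$ is well-defined and, by Lemma \ref{lem:formulae_convL}, equals $\inf\{F(y) + L\|x-y\| : y \in X\}$; it is convex and $L$-Lipschitz by construction. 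Since $\ell_z \le \conv_L(g)$ for all $z$ (each $\ell_z$ is convex, $L$-Lipschitz, $\le g$) and $\conv_L(g) \le g$, the same sandwich $\ell_z(z) = f(z) \le \conv_L(g)(z) \le g(z) \le f(z)$ gives $F_L = f$ on $E$, and differentiability plus $\ell_z$ being a supporting affine function at $z$ gives $DF_L(z) = G(z)$. The $C^{1,\omega}$ regularity of $F_L$ with the same constants follows from Lemma \ref{lem:proofregularity_convL_withinfimumformula} applied to the smoothness estimate for $g$ just established. Finally, for \eqref{eq:mainthereomsuperrefl_Lipschitzpresserved}: $\lip(F_L) \le L$ since $F_L$ is $L$-Lipschitz, hence $\sup_x\|DF_L(x)\|_* \le L$; conversely $\sup_x\|DF_L(x)\|_* \ge \sup_{z\in E}\|DF_L(z)\|_* = \sup_{z\in E}\|G(z)\|_* = L$, and $\lip(F_L) \ge \sup_x\|DF_L(x)\|_*$ always, so all four quantities coincide. \textbf{The main obstacle} I expect is verifying cleanly that the uniform smoothness estimate for $g$ transfers to $F = \conv(g)$ (and to $F_L$); this is where one must use the convex-combination formula \eqref{eq:threeformulas_conv} for $\conv(g)$ and argue as in Lemma \ref{lem:proofregularity_convL_withinfimumformula} — taking $L \to \infty$ there, or writing the argument directly — being careful that near-optimal convex-combination representations of a point translate coherently to the shifted points $x \pm$ multiples of $h$, exactly as in that lemma.
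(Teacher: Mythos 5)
Your proposal is correct and follows essentially the same route as the paper: establish the uniform smoothness estimate for $g$ by reusing a near-optimal $y\in E$ at the shifted points, transfer it to $\conv(g)$ (resp. $\conv_L(g)$) via the convex-combination formula as in Lemma \ref{lem:proofregularity_convL_withinfimumformula}, invoke Lemma \ref{lem:convexcombregularity_impliesC1omega} for the $C^{1,\omega}$ bounds, and handle the Lipschitz case and the equalities in \eqref{eq:mainthereomsuperrefl_Lipschitzpresserved} exactly as the paper does. The only cosmetic difference is that you recover $DF=G$ on $E$ by a subdifferential argument after establishing global differentiability, whereas the paper gets differentiability at points of $E$ directly from the sandwich $m\leq F\leq g$; both are valid.
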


\subsection{The necessity part in Theorem \ref{thm:technicalgeneraltheorem}} The ``only if'' part of Theorem  is a consequence of the following lemma. 

\begin{lemma}\label{lem:necessityofCW1omega}
Let $X$ be a normed space and $\omega$ a modulus of continuity. Let $F\in C^{1,\omega}(X)$ be convex and denote $M: = \lip_\omega(DF, X).$ Then $(F,DF)$ satisfies condition \eqref{eq:definitionCw1omega_forallx} with constant $M>0$ on the set $X.$ In other words, $A_{\omega, \co}(F,DF,X) \leq \lip_\omega(DF, X).$  
\end{lemma}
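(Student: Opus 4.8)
The plan is to establish the pointwise inequality
$$
F(x) - F(y) - DF(y)(x-y) \leq M\,\varphi_\omega(\|x-y\|), \qquad x,y \in X,
$$
where $M = \lip_\omega(DF,X)$, and then invoke convexity to recover the full condition \eqref{eq:definitionCw1omega_forallx} with constant $M$ on $X$. The first step is to write, for fixed $x,y \in X$ and $h = x-y$, the elementary identity
$$
F(y+h) - F(y) - DF(y)(h) = \int_0^1 \big( DF(y+th) - DF(y) \big)(h)\, \ud t,
$$
which holds because $F \in C^1(X)$ and the map $t \mapsto F(y+th)$ is differentiable on $[0,1]$ with derivative $DF(y+th)(h)$. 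Bounding the integrand via $|(DF(y+th)-DF(y))(h)| \leq \|DF(y+th)-DF(y)\|_*\,\|h\| \leq M\,\omega(t\|h\|)\,\|h\|$, I get
$$
F(y+h) - F(y) - DF(y)(h) \leq M\,\|h\| \int_0^1 \omega(t\|h\|)\,\ud t = M \int_0^{\|h\|} \omega(s)\,\ud s = M\,\varphi_\omega(\|h\|),
$$
after the substitution $s = t\|h\|$. This is the pointwise estimate, and it already gives the special case $z = x$ of \eqref{eq:definitionCw1omega_forallx}.

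The second step is to upgrade to the general condition involving three points $y,z \in X$ and arbitrary $x \in X$. Since $F$ is convex and differentiable, the supporting hyperplane inequality $F(z) + DF(z)(x-z) \leq F(x)$ holds for all $x,z \in X$. Combining this with the pointwise estimate from the first step,
$$
f(z) + G(z)(x-z) = F(z) + DF(z)(x-z) \leq F(x) \leq F(y) + DF(y)(x-y) + M\,\varphi_\omega(\|x-y\|) = f(y) + G(y)(x-y) + M\,\varphi_\omega(\|x-y\|),
$$
which is precisely \eqref{eq:definitionCw1omega_forallx} with $E = X$ and constant $M = \lip_\omega(DF,X)$. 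Since $A_{\omega,\co}(F,DF,X)$ is by definition (see \eqref{eq:definitionseminormA}) the smallest constant for which this condition holds on $X$, we conclude $A_{\omega,\co}(F,DF,X) \leq \lip_\omega(DF,X)$.

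There is no serious obstacle here; the argument is entirely routine. The only point requiring a modicum of care is the justification that $t \mapsto F(y+th)$ is genuinely $C^1$ on $[0,1]$ with the stated derivative, which follows immediately from Fréchet differentiability of $F$ together with continuity of $DF$ (both built into the definition of $C^1(X)$), so that the fundamental theorem of calculus applies to this real-valued function of one real variable. One could alternatively phrase the first step without integration, by applying the mean value inequality to $t \mapsto F(y+th) - DF(y)(y+th)$, but the integral form makes the appearance of $\varphi_\omega$ most transparent and is what I would write.
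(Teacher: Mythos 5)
Your proof is correct and follows essentially the same route as the paper: the integral (Taylor/fundamental theorem of calculus) representation of $F(x)-F(y)-DF(y)(x-y)$, bounded via $\lip_\omega(DF,X)$ to produce $M\varphi_\omega(\|x-y\|)$, combined with the convexity inequality $F(x)\geq F(z)+DF(z)(x-z)$. Nothing to add.
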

\begin{proof}
By Taylor's theorem and the convexity of $X,$ we can write, for each $x,y\in X,$ 
\begin{align*}
F(x) & = F(y) + DF(y)(x-y) + \int_0^1 (DF(y+t(x-y))-DF(y))(x-y) \ud t \\
&  \leq  F(y) + DF(y)(x-y) + M \int_0^1 \omega(t\|x-y\|) \|x-y \| \ud t \\
& =F(y) + DF(y)(x-y) + M \varphi_\omega(\|x-y\|).
\end{align*}
On the other hand, the convexity of $F$ in $X$ gives, for all $x,z\in x:$
$$
F(x) \geq F(z) + DF(z)(x-z) .
$$
Both inequalities together say that
$$
F(z) + DF(z)(x-z) \leq F(y) + DF(y)(x-y) + M \varphi_\omega(\|x-y\|), \quad x,y,z\in X.
$$
This gives condition \eqref{eq:definitionCw1omega_forallx} with constant $M>0$ on $X.$ 
\end{proof}

\subsection{The sufficiency part of Theorem \ref{thm:technicalgeneraltheorem}} Assume that $\omega, \varphi_\omega$ and $\psi_\omega$ satisfy condition \eqref{eq:assumptiononregularitypsiomega_generaltheorem}. Let $E \subset X$ and $(f,G)$ a $1$-jet on $E$ with the condition \eqref{eq:definitionCw1omega_forallx} for some $M>0.$ We define $g$ as in \eqref{eq:formulaextensionC1w_generaltheorem}. Moreover, define 
\begin{equation}\label{eq:minimalfunction}
m(x) = \sup_{z\in E} \lbrace f(z) + G(z)(x-z) \rbrace, \quad x\in X.
\end{equation}

We split the proof of the sufficiency in Theorem \ref{thm:technicalgeneraltheorem} into two lemmas.

\begin{lemma}\label{lem:properties_g_generaltheorem}
We have that
$$
m(x) \leq g(x), \quad x\in X, \quad \text{and} \quad m(x)= f(x)=g(x), \quad x\in E.
$$
Also, $g$ satisfies the inequality
\begin{equation}\label{eq:smoothness_g_generaltheorem}
\lambda g(x+(1-\lambda)h) + (1-\lambda) g(x-\lambda h) - g(x) \leq \lambda (1-\lambda) \cdot K \cdot M \cdot \varphi_\omega(\|h\|), \quad \lambda \in [0,1], \, z,h \in X.
\end{equation}
Moreover, in the case where $G$ is bounded in $E,$ and $L= \sup_{z\in E} \|G(z)\|_*,$ the function $m$ is $L$-Lipschitz on $X$. 
\end{lemma}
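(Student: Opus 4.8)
The plan is to prove the three assertions of Lemma \ref{lem:properties_g_generaltheorem} in order: first the pointwise comparisons $m \le g$ on $X$ and $m = f = g$ on $E$; then the uniform smoothness inequality \eqref{eq:smoothness_g_generaltheorem} for $g$; and finally, under the boundedness hypothesis, the $L$-Lipschitz property of $m$.

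\textbf{Step 1: the comparison $m \le g$ and the equalities on $E$.} Fix $x \in X$ and $y, z \in E$. Condition \eqref{eq:definitionCw1omega_forallx} with constant $M$ reads $f(z) + G(z)(x-z) \le f(y) + G(y)(x-y) + M\varphi_\omega(\|x-y\|)$. Taking the supremum over $z \in E$ on the left and then the infimum over $y \in E$ on the right yields exactly $m(x) \le g(x)$ for all $x \in X$. Next, for $x \in E$: taking $y = x$ in the definition \eqref{eq:formulaextensionC1w_generaltheorem} of $g$ gives $g(x) \le f(x) + G(x)(x-x) + M\varphi_\omega(0) = f(x)$ (using $\varphi_\omega(0)=0$), while taking $z = x$ in \eqref{eq:minimalfunction} gives $m(x) \ge f(x)$. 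Combined with $m \le g$ already proved, we get $f(x) \le m(x) \le g(x) \le f(x)$, so all three coincide on $E$. I would also remark here that $g$ is finite (indeed $m(x) > -\infty$ once, say, some affine minorant exists, which is guaranteed once \eqref{eq:definitionCw1omega_forallx} holds for nonempty $E$), so $\conv(g)$ makes sense.

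\textbf{Step 2: the uniform smoothness of $g$.} Fix $\lambda \in [0,1]$ and $x, h \in X$; I want to bound $\lambda g(x+(1-\lambda)h) + (1-\lambda)g(x-\lambda h) - g(x)$. The idea is the standard one: estimate the first two terms from above by a single near-optimal choice of base point $y \in E$ for $g(x)$, then exploit the smoothness \eqref{eq:assumptiononregularitypsiomega_generaltheorem} of $\psi_\omega$ scaled by $M$. Concretely, given $\varepsilon > 0$, pick $y \in E$ with $g(x) \ge f(y) + G(y)(x-y) + M\varphi_\omega(\|x-y\|) - \varepsilon$. Using the \emph{same} $y$ as a competitor for $g$ at the two shifted points, and writing $u := x - y$,
\begin{align*}
\lambda g(x+(1-\lambda)h) &+ (1-\lambda)g(x-\lambda h) - g(x) \\
&\le \lambda\big[f(y) + G(y)(u+(1-\lambda)h) + M\psi_\omega(u+(1-\lambda)h)\big] \\
&\quad + (1-\lambda)\big[f(y) + G(y)(u-\lambda h) + M\psi_\omega(u-\lambda h)\big] \\
&\quad - \big[f(y) + G(y)(u) + M\psi_\omega(u)\big] + \varepsilon.
\end{align*}
The $f(y)$ terms cancel; the linear terms $G(y)(\cdot)$ cancel because $\lambda(u+(1-\lambda)h) + (1-\lambda)(u-\lambda h) = u$; and what remains is $M\big[\lambda\psi_\omega(u+(1-\lambda)h) + (1-\lambda)\psi_\omega(u-\lambda h) - \psi_\omega(u)\big] + \varepsilon$, which by \eqref{eq:assumptiononregularitypsiomega_generaltheorem} (applied with $z = u$) is at most $M \cdot \lambda(1-\lambda) K \varphi_\omega(\|h\|) + \varepsilon$. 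Letting $\varepsilon \to 0$ gives \eqref{eq:smoothness_g_generaltheorem}.

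\textbf{Step 3: $m$ is $L$-Lipschitz when $G$ is bounded.} For $x, x' \in X$ and any $z \in E$, $f(z) + G(z)(x-z) = f(z) + G(z)(x'-z) + G(z)(x-x') \le m(x') + \|G(z)\|_* \|x-x'\| \le m(x') + L\|x-x'\|$; taking the supremum over $z \in E$ gives $m(x) \le m(x') + L\|x-x'\|$, and by symmetry $|m(x) - m(x')| \le L\|x-x'\|$. I expect Step 2 to be the main obstacle, though really the only subtlety there is the bookkeeping of the cancellations and making sure the near-optimal $y$ chosen for $g(x)$ is legitimately reused for the shifted points — which it is, since $y$ ranges over all of $E$ in each infimum; the rest is routine. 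One should also be slightly careful that $g$ is finite-valued (so that the $\varepsilon$-argument is not vacuous), which follows from Step 1 together with $g \le f$ on $E$ being finite and $g \ge m > -\infty$.
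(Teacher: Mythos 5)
Your proposal is correct and follows essentially the same route as the paper: the same comparison $m\le g$ and $m=f=g$ on $E$ straight from \eqref{eq:definitionCw1omega_forallx}, and for \eqref{eq:smoothness_g_generaltheorem} the same device of choosing a near-optimal $y\in E$ for $g(x)$, reusing it as a competitor at the two shifted points, cancelling the affine terms, and invoking \eqref{eq:assumptiononregularitypsiomega_generaltheorem} for $\psi_\omega$. Your Step 3 (that $m$, as a supremum of $L$-Lipschitz affine functions, is $L$-Lipschitz) is exactly the standard argument the paper's proof gestures at but leaves unfinished, so you have in fact supplied the missing sentence.
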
 
\begin{proof}
By the inequalities \eqref{eq:definitionCw1omega_forallx} with constant $M>0$ for the jet $(f,G)$ on $E$, we have
$$
f(y) +  G(y)(x-y) + M \varphi_\omega(\|x-y\|) \geq f(z) + G(z) (x-z), \quad y,z\in E,\, x\in X.
$$
The definitions of $g$ and $m$ in \eqref{eq:formulaextensionC1w_generaltheorem} and \eqref{eq:minimalfunction} then implies $m \leq g$ on $X$ and that $m=f=g$ on $E.$

To show \eqref{eq:smoothness_g_generaltheorem}, let $x,h\in X$, $\lambda \in [0,1]$ and $\varepsilon>0.$ By the definition of $g,$ we can find $y\in E$, depending on $\varepsilon$ and $x,$ so that
$$
g(x)+ \varepsilon \geq f(y)+ G(y)(x-y) + M \varphi_\omega(\|x-y\|). 
$$
Then we have
\begin{align*}
\lambda   g(x+(1- & \lambda)h)   + (1-\lambda) g(x-\lambda h) - g(x) \\
& \leq  \lambda \left[ f(y)+ G(y)(x+(1-\lambda)h-y) + M \varphi_\omega(\|x+(1-\lambda)h-y\|) \right] \\
&  \quad + (1-\lambda) \left[ f(y)+ G(y)(x-\lambda h-y) + M \varphi_\omega(\|x-\lambda h-y\|) \right ] \\
& \quad -   \left( f(y)+ G(y)(x-y) + M \varphi_\omega(\|x-y\|) \right) +   \varepsilon \\
& = M \left( \lambda \varphi_\omega(\|(x-y)+(1-\lambda)h\|) + (1-\lambda) \varphi_\omega(\|(x-y)+\lambda h\|) -   \varphi_\omega(\|x-y\|)  \right) +  \varepsilon.
\end{align*}
According to \eqref{eq:assumptiononregularitypsiomega_generaltheorem}, the last term is bounded by
$$
K \cdot M \cdot \varphi_\omega(\|h\|) + \varepsilon.
$$
Since $\varepsilon>0$ is arbitrary, the assertion follows.

In addition, if $G: E \to X^*$ is bounded, and $L= \sup_{z\in E} \|G(z)\|_*$
\end{proof}

\begin{lemma}\label{lem:regularity_convg_general}
The function $F:X \to \R$ from \eqref{eq:formulaextensionC1w_generaltheorem} is convex and of class $C^{1,\omega}(X)$, with $(F,DF)=(f,G)$ on $E$, and satisfies \eqref{eq:estimate_extensionoperator_generaltheorem}.  
\end{lemma}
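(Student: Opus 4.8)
The plan is to combine the regularity estimate for $g$ from Lemma \ref{lem:properties_g_generaltheorem} with the abstract transfer result of Lemma \ref{lem:convexcombregularity_impliesC1omega}, and then to check the interpolation conditions $(F,DF)=(f,G)$ on $E$ by a squeezing argument. First I would observe that, since $g$ has the continuous convex minorant $m$ (by Lemma \ref{lem:properties_g_generaltheorem}, $m\leq g$ and $m$ is a supremum of affine continuous functions, hence convex and lower semicontinuous; in fact $m$ is continuous on all of $X$ when $G$ is bounded, and in general it suffices that $m$ is a continuous convex minorant near the points of interest — more carefully, $m$ is convex and lsc, and since $F=\conv(g)\geq m$, the envelope $F$ is a genuine real-valued convex function). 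So $F=\conv(g)$ is well-defined, convex, and $F\leq g$ on $X$ with $F\geq m$ on $X$.

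Next I would invoke Lemma \ref{lem:proofregularity_convL_withinfimumformula}'s sibling argument for the plain convex envelope: the inequality \eqref{eq:smoothness_g_generaltheorem} for $g$, namely
$$
\lambda g(x+(1-\lambda)h) + (1-\lambda) g(x-\lambda h) - g(x) \leq \lambda(1-\lambda)\cdot K\cdot M\cdot \varphi_\omega(\|h\|),
$$
passes to $F=\conv(g)$ by the exact same convex-combination computation used in the proof of Lemma \ref{lem:proofregularity_convL_withinfimumformula} (just without the $L\|x-y\|$ term, using the third formula in \eqref{eq:threeformulas_conv} for $\conv(g)$). Hence $F$ satisfies \eqref{eq:unfiformsmoothnessF_inBanach} with constant $A=K\cdot M$. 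Since $F$ is convex and lower semicontinuous (it is a supremum of affine continuous functions), Lemma \ref{lem:convexcombregularity_impliesC1omega} applies and yields $F\in C^{1,\omega}(X)$ together with the estimates $A_{\omega,\co}(F,DF,X)\leq K\cdot M$ and $\lip_\omega(DF,X)\leq \tfrac{4}{3}K\cdot M$, which is exactly \eqref{eq:estimate_extensionoperator_generaltheorem}.

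It remains to verify $(F,DF)=(f,G)$ on $E$. For $x_0\in E$ we have from Lemma \ref{lem:properties_g_generaltheorem} that $m(x_0)=f(x_0)=g(x_0)$, and since $m\leq F\leq g$ everywhere we get $F(x_0)=f(x_0)$; so $F=f$ on $E$. For the derivative, fix $x_0\in E$. The affine function $\ell(x):=f(x_0)+G(x_0)(x-x_0)$ satisfies $\ell\leq m\leq F$ on $X$ (by \eqref{eq:definitionCw1omega_forallx} with $y=z=x_0$ one checks $\ell\leq g$; more directly $\ell\leq m$ by definition of $m$), and $\ell(x_0)=F(x_0)$. Thus $G(x_0)$ is a subgradient of the convex function $F$ at $x_0$; since $F$ is differentiable at $x_0$ (by the previous paragraph), the subgradient is unique and equals $DF(x_0)$, so $DF(x_0)=G(x_0)$. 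This proves $(F,DF)=(f,G)$ on $E$ and completes the lemma. The only mildly delicate point is ensuring $F$ is real-valued (not $-\infty$) and lower semicontinuous; this is handled exactly as in the first part of the proof of Lemma \ref{lem:formulae_convL}, using that $g$ dominates the affine/convex minorant $m$, so $F\geq m>-\infty$, and $F$ is a supremum of affine continuous functions hence lsc. I do not expect any real obstacle here — the work has been front-loaded into Lemmas \ref{lem:properties_g_generaltheorem} and \ref{lem:convexcombregularity_impliesC1omega}.
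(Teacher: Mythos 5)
Your proposal is correct and follows essentially the same route as the paper: transfer the uniform smoothness inequality \eqref{eq:smoothness_g_generaltheorem} from $g$ to $F=\conv(g)$ via the convex-combination formula in \eqref{eq:threeformulas_conv}, invoke Lemma \ref{lem:convexcombregularity_impliesC1omega} for the $C^{1,\omega}$ regularity and the bounds \eqref{eq:estimate_extensionoperator_generaltheorem}, and get $F=f$ on $E$ by squeezing $m\leq F\leq g$. The only (harmless) deviation is the identification $DF=G$ on $E$: the paper deduces differentiability at $y\in E$ directly from the squeeze $0\leq F(x)-f(y)-G(y)(x-y)\leq \varphi_\omega(\|x-y\|)$ divided by $\|x-y\|$, whereas you use $G(y)\in\partial F(y)$ together with uniqueness of the subgradient at points where the convex function $F$ is differentiable; both arguments are valid.
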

\begin{proof}
Clearly $m$ is convex and lower semicontinuous, as supremum of convex and continuous functions $X \ni x\mapsto f(z) + G(z)(x-z),$ $z\in E.$ Therefore $F$ is well-defined and, by the same reasons, $F$ is convex and lower-semicontinuous in $X.$ Moreover, by the definition of $F$, one has
$$
m(x) \leq F(x) \leq g(x), \quad x\in X.
$$
By Lemma \ref{lem:properties_g_generaltheorem}, $F(y)=f(y)$ for all $y\in E.$ Using this property and the definitions of $m$ and $g,$ we have that, for all $x\in X$ and $y\in E$ that
\begin{align*}
0 & \leq  \frac{m(x)-f(y)-G(y)(x-y)}{\|x-y\|} \\
& \leq \frac{F(x)-f(y)-G(y)(x-y)}{\|x-y\|} \\
& \leq \frac{g(x)-f(y)-G(y)(x-y)}{\|x-y\|} \leq \frac{\varphi_\omega(\|x-y\|)}{\|x-y\|} .
\end{align*}
Letting $\|x-y\| \to 0^+,$ this implies that $F$ is differentiable at $y\in E$ with $DF(y)=G(y).$

We now prove the regularity properties of $F$. In order to do so, according to Lemma \ref{lem:convexcombregularity_impliesC1omega}, it suffices to prove \eqref{eq:unfiformsmoothnessF_inBanach} for $F.$ We will utilize the following formula for $F:$
$$
F(x) = \inf\left\lbrace \sum_{j=1}^{n}\lambda_{j} g(x_j) \, : \, \lambda_j\geq 0,
\sum_{j=1}^{n}\lambda_j =1, \, x=\sum_{j=1}^{n}\lambda_j x_j, \, n\in\N \right\rbrace, \quad x\in X.
$$
Given $x,h\in X,$ $\lambda \in [0,1]$ and $\varepsilon>0,$ we can find $n\in \N,$ $\lbrace x_j \rbrace_{j=1}^n,$ $\lbrace \lambda_j \rbrace_{j=1}^n$ with $\sum_{j=1}^n \lambda_j=1,$ $\lambda_j \geq 0,$ $x= \sum_{j=1}^n \lambda_j x_j $ and
$$
F(x) \geq \sum_{j=1}^n\lambda_j g(x_j) - \varepsilon
$$
Now, using the fact that $x+(1-\lambda)h = \sum_{j=1}^n \lambda_j (x_j + (1-\lambda) h)$ and $x-\lambda h =\sum_{j=1}^n \lambda_j (x_j -\lambda h),$ we have 
$$
F(x + (1-\lambda) h) \leq \sum_{j=1}^n\lambda_j g(x_j + (1-\lambda) h), \quad \text{and} \quad  F(x  -\lambda h) \leq \sum_{j=1}^n\lambda_j g(x_j -\lambda h).
$$
Using this and Lemma \ref{lem:properties_g_generaltheorem}, we get
\begin{align*}
\lambda F(x+(1-\lambda) h) & + (1-\lambda) F(x-\lambda h) - F(x) \\
& \leq  \sum_{j=1}^n \lambda_j \left[ \lambda g(x_j + (1-\lambda) h) + (1-\lambda) g(x_j - \lambda h) - g(x_j)\right] + \varepsilon \\
& \leq \sum_{j=1}^n \lambda_j \left[\lambda (1-\lambda) \cdot K \cdot M \cdot \varphi_\omega(\|h\|)\right]  + \varepsilon \\
& = \lambda (1-\lambda) \cdot K \cdot M \cdot \varphi_\omega(\|h\|) + \varepsilon.
\end{align*}
Since $\varepsilon >0$ was arbitrary, we have the desired inequality \eqref{eq:unfiformsmoothnessF_inBanach}. 
\end{proof}

Now we prove the part concerning the Lipschitz case in Theorem \ref{thm:technicalgeneraltheorem}.

\begin{lemma}\label{lem:conv_L_Lipschitzcase_generaltheorem}
Assume that $G: E \to X^*$ is bounded with $L:= \sup_{z\in E} \| G(z)\|_*$ and let $F_L:= \conv_L(g),$ as in \eqref{eq:definition_convex_L_envelope}. Then $F_L \in C^{1,\omega}(X)$ is convex and $L$-Lipschitz, with $(F_L,DF_L)=(f,G)$ on $E$ and so that $F_L$ satisfies the estimates \eqref{eq:estimate_extensionoperator_generaltheorem}. 
\end{lemma}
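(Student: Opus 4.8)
The plan is to combine the two key tools already established in Section \ref{sect:formulae_convexenvelopes}: the identity $F_L = \conv_L(g) = \inf\{F(y) + L\|x-y\| : y\in X\}$ from Lemma \ref{lem:formulae_convL}, and the fact (Lemma \ref{lem:proofregularity_convL_withinfimumformula}) that the uniform-smoothness estimate \eqref{eq:smoothness_g_generaltheorem} established for $g$ in Lemma \ref{lem:properties_g_generaltheorem} is inherited by $F_L$. First I would observe that $g$ (hence $F_L$) has an $L$-Lipschitz convex minorant, namely the function $m$ of \eqref{eq:minimalfunction}, which Lemma \ref{lem:properties_g_generaltheorem} tells us is $L$-Lipschitz on $X$ (its defining affine pieces $x\mapsto f(z)+G(z)(x-z)$ all have slope bounded by $L$); this guarantees $\conv_L(g)$ is well defined and finite. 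Then Lemma \ref{lem:proofregularity_convL_withinfimumformula}, applied with $\kappa = K\cdot M$, yields that $F_L$ satisfies
\[
\lambda F_L(x+(1-\lambda)h) + (1-\lambda)F_L(x-\lambda h) - F_L(x) \leq \lambda(1-\lambda)\cdot K\cdot M\cdot \varphi_\omega(\|h\|)
\]
for all $\lambda\in[0,1]$, $x,h\in X$. Since $F_L$ is convex and (as a supremum of $L$-Lipschitz convex functions, equivalently by the $\inf$-formula) lower semicontinuous, Lemma \ref{lem:convexcombregularity_impliesC1omega} applies verbatim and gives $F_L\in C^{1,\omega}(X)$ with $A_{\omega,\co}(F_L,DF_L,X)\leq K\cdot M$ and $\lip_\omega(DF_L,X)\leq \frac{4}{3}K\cdot M$, i.e.\ the estimates \eqref{eq:estimate_extensionoperator_generaltheorem}.

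Next I would verify the interpolation $(F_L,DF_L)=(f,G)$ on $E$. The inequalities $m\leq F_L\leq F\leq g$ hold on $X$ (the first because $m$ is an $L$-Lipschitz convex competitor in the definition of $F_L$, or directly from Lemma \ref{lem:formulae_convL} since $F_L(x)\leq F(x)$; the middle because $F_L\le F$; the last by definition of $F=\conv(g)$ and $g\ge F$ trivially — more precisely $F\le g$). By Lemma \ref{lem:properties_g_generaltheorem}, $m=f=g$ on $E$, so $F_L=f$ on $E$ by squeezing. Then, exactly as in the proof of Lemma \ref{lem:regularity_convg_general}, for $y\in E$ and $x\in X$ the chain
\[
0 \le \frac{m(x)-f(y)-G(y)(x-y)}{\|x-y\|} \le \frac{F_L(x)-f(y)-G(y)(x-y)}{\|x-y\|} \le \frac{g(x)-f(y)-G(y)(x-y)}{\|x-y\|} \le \frac{\varphi_\omega(\|x-y\|)}{\|x-y\|}\to 0
\]
as $\|x-y\|\to 0^+$ (using $m(x)\ge f(y)+G(y)(x-y)$ from the definition of $m$), forces $F_L$ to be differentiable at $y$ with $DF_L(y)=G(y)$. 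Finally, $F_L$ is $L$-Lipschitz by construction (Lemma \ref{lem:formulae_convL}), so $\|DF_L(x)\|_*\le \lip(F_L)\le L$ for all $x$; and since $DF_L(z)=G(z)$ for $z\in E$, we get $\sup_{x}\|DF_L(x)\|_*\ge \sup_{z\in E}\|G(z)\|_*=L$, yielding the chain of equalities \eqref{eq:mainthereomsuperrefl_Lipschitzpresserved}.

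I do not expect a genuine obstacle here, since all the heavy lifting has been front-loaded into Lemmas \ref{lem:formulae_convL}, \ref{lem:proofregularity_convL_withinfimumformula}, \ref{lem:convexcombregularity_impliesC1omega} and \ref{lem:properties_g_generaltheorem}. The one point requiring a little care is the \emph{lower semicontinuity} of $F_L$ needed to invoke Lemma \ref{lem:convexcombregularity_impliesC1omega}: I would argue it either from the representation of $F_L$ as a supremum of affine $L$-Lipschitz (hence continuous) functions, or more simply note that an $L$-Lipschitz function on all of $X$ is automatically continuous, so no semicontinuity subtlety arises at all. A second minor point is confirming that the hypothesis of Lemma \ref{lem:proofregularity_convL_withinfimumformula} — that $g$ admits an $L$-Lipschitz convex minorant — is met; this is precisely the $L$-Lipschitzness of $m$ from Lemma \ref{lem:properties_g_generaltheorem} together with $m\le g$.
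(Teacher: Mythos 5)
Your proposal is correct and follows essentially the same route as the paper: well-definedness of $F_L$ via the $L$-Lipschitz minorant $m$ from Lemma \ref{lem:properties_g_generaltheorem}, the squeeze $m\leq F_L\leq g$ with $m=f=g$ on $E$ to get $(F_L,DF_L)=(f,G)$ on $E$ as in Lemma \ref{lem:regularity_convg_general}, Lipschitzness of $F_L$ by construction, and the regularity estimates by transferring \eqref{eq:smoothness_g_generaltheorem} to $F_L$ and invoking Lemma \ref{lem:convexcombregularity_impliesC1omega}. If anything, your write-up is slightly more explicit than the paper's, since you spell out the final regularity step (Lemma \ref{lem:proofregularity_convL_withinfimumformula} with $\kappa=K\cdot M$ followed by Lemma \ref{lem:convexcombregularity_impliesC1omega}) and the continuity of $F_L$ needed there, which the paper leaves implicit.
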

\begin{proof}
We learnt from Lemma \ref{lem:properties_g_generaltheorem} that $m$ is, in addition, $L$-Lipschitz. Therefore $F_L : X \to \R$ is well-defined with $m \leq F_L  \leq g$ on $X.$ In particular, $F_L=g$ on $E.$ Following an identical argument to that of the proof of Lemma \ref{lem:regularity_convg_general}, we have that $F_L$ is differentiable at each $y\in E,$ with $DF_L = G$ on $E.$ Besides, $F_L,$ as a finite supremum of convex and $L$-Lipschitz functions in $X$, is convex and $L$-Lipschitz as well. Thus, it remains to show the regularity properties of $F.$

\end{proof}

\subsection{The superreflexive case: proof of Theorem \ref{thm:mainsuperreflexive}}

Let $(X,\|\cdot\|)$ be a Banach space satisfying \eqref{eq:modulussmoothnessalpha} for some $0 < \alpha \leq 1$ and $C>0.$ Let $\omega$ be a modulus of continuity with the property \eqref{eq:assumptiononOmegaAlpha} for that $\alpha.$ In the next lemma, we state the regularity of the function $\varphi_\omega \circ \| \cdot \|$ in $X$ with an estimate for $\lip_\omega(\psi_\omega, X)$ in terms of $\alpha$ and $C.$

\begin{lemma}\label{lem:regularityofpsi_w}
Define $\psi_\omega = \varphi_\omega \circ \| \cdot \|.$ Then $\psi_\omega \in C^{1,\omega}(X)$ with
$$
\| D \psi_\omega(x)- D\psi_\omega (y)\|_* \leq K \omega(\|x-y\|), \quad x,y\in X,
$$
where $K$ is of the form $K= C \cdot K(\alpha);$ where $C$ is that of \eqref{eq:modulussmoothnessalpha} and $K(\alpha)$ only depends on $\alpha.$ Consequently,
$$
\lambda \psi_\omega(z+(1-\lambda) h) + (1-\lambda) \psi_\omega(z-\lambda h) - \psi_\omega(z) \leq   \lambda (1-\lambda) \cdot K  \cdot \varphi_\omega(\|h\|)
$$ 
for all $\lambda\in [0,1]$, and $z,h \in X.$ 
\end{lemma}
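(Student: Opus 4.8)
The plan is to realise $\psi_\omega$ as a composition $\widetilde\rho\circ N_\alpha$, where $N_\alpha:=\frac{1}{1+\alpha}\|\cdot\|^{1+\alpha}$ carries the uniform smoothness coming from \eqref{eq:modulussmoothnessalpha} and $\widetilde\rho$ is a concave increasing scalar function — concavity of $\widetilde\rho$ being precisely what hypothesis \eqref{eq:assumptiononOmegaAlpha} delivers — and then to push the smoothness of $N_\alpha$ through $\widetilde\rho$. First I would note that, applying \eqref{eq:modulussmoothnessalpha} with $x=z+(1-\lambda)h$, $y=z-\lambda h$ (so that $\lambda x+(1-\lambda)y=z$ and $x-y=h$), the convex function $N_\alpha$ satisfies
$$
\lambda N_\alpha(z+(1-\lambda)h)+(1-\lambda)N_\alpha(z-\lambda h)-N_\alpha(z)\le\frac{C}{1+\alpha}\,\lambda(1-\lambda)\,\|h\|^{1+\alpha},\qquad \lambda\in[0,1],\ z,h\in X,
$$
and in particular (case $\lambda=\frac12$, with $h$ replaced by $2h$) that $N_\alpha(z+h)+N_\alpha(z-h)-2N_\alpha(z)\le\frac{C\,2^\alpha}{1+\alpha}\|h\|^{1+\alpha}$. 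Then I would set $\widetilde\rho(s):=\varphi_\omega\big(((1+\alpha)s)^{1/(1+\alpha)}\big)$ for $s\ge0$, so that $\psi_\omega=\widetilde\rho\circ N_\alpha$; using $\varphi_\omega'=\omega$ one computes $\widetilde\rho'(s)=\omega(u)\,u^{-\alpha}$ with $u=((1+\alpha)s)^{1/(1+\alpha)}$, and here \eqref{eq:assumptiononOmegaAlpha} — i.e.\ that $t\mapsto\omega(t)/t^\alpha$ is non-increasing — says exactly that $\widetilde\rho'$ is non-increasing, so $\widetilde\rho$ is concave, increasing, with $\widetilde\rho(0)=0$ and $\widetilde\rho'(N_\alpha(z))=\omega(\|z\|)\|z\|^{-\alpha}$ for $z\neq0$.

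The heart of the argument is the second-difference bound $\psi_\omega(z+h)+\psi_\omega(z-h)-2\psi_\omega(z)\le K_0\,\varphi_\omega(\|h\|)$ with $K_0$ of the form $C\cdot K_0(\alpha)$, which I would prove by a case split on the size of $\|z\|$ relative to $\|h\|$. If $\|z\|\le\|h\|$, then $\|z\pm h\|\le 2\|h\|$, so, discarding the nonpositive term $-2\psi_\omega(z)$, the left side is $\le 2\varphi_\omega(2\|h\|)\le 8\varphi_\omega(\|h\|)$ by Proposition \ref{prop:inequalities_omega_varphiomega}; here no smoothness of the norm is needed. If $\|z\|>\|h\|$ (so $z\neq0$), concavity of $\widetilde\rho$ applied at the point $N_\alpha(z)$ gives
$$
\psi_\omega(z+h)+\psi_\omega(z-h)-2\psi_\omega(z)\le\widetilde\rho'(N_\alpha(z))\big(N_\alpha(z+h)+N_\alpha(z-h)-2N_\alpha(z)\big),
$$
and I would bound the first factor by $\omega(\|z\|)\|z\|^{-\alpha}\le\omega(\|h\|)\|h\|^{-\alpha}$ (monotonicity of $\omega(t)/t^\alpha$ and $\|z\|>\|h\|$), the second by the $\|h\|^{1+\alpha}$-estimate above, and finally use $\|h\|\omega(\|h\|)\le 2\varphi_\omega(\|h\|)$ from Proposition \ref{prop:inequalities_omega_varphiomega}, arriving at a bound $\le\frac{C\,2^{1+\alpha}}{1+\alpha}\varphi_\omega(\|h\|)$. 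Combining the two cases — and using $C\ge 2^{1-\alpha}\ge1$, which follows by testing \eqref{eq:modulussmoothnessalpha} on $y=-x$, $\lambda=\frac12$ — yields $K_0$ (one may take $K_0(\alpha)=8$).

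To conclude, I would use convexity of $\psi_\omega$ to manufacture the missing $\lambda(1-\lambda)$ factor: since $z+(1-\lambda)h=\lambda z+(1-\lambda)(z+h)$ and $z-\lambda h=(1-\lambda)z+\lambda(z-h)$, convexity gives
$$
\lambda\psi_\omega(z+(1-\lambda)h)+(1-\lambda)\psi_\omega(z-\lambda h)-\psi_\omega(z)\le\lambda(1-\lambda)\big(\psi_\omega(z+h)+\psi_\omega(z-h)-2\psi_\omega(z)\big),
$$
so the previous step shows that the convex, continuous function $\psi_\omega$ satisfies \eqref{eq:unfiformsmoothnessF_inBanach} with constant $A=K_0$. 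Lemma \ref{lem:convexcombregularity_impliesC1omega} then gives $\psi_\omega\in C^{1,\omega}(X)$ with $A_{\omega,\co}(\psi_\omega,D\psi_\omega,X)\le K_0$ and $\|D\psi_\omega(x)-D\psi_\omega(y)\|_*\le\frac43 K_0\,\omega(\|x-y\|)$, which is the first assertion with $K:=\frac43 K_0=C\cdot K(\alpha)$; the displayed convex-combination inequality of the lemma then holds with this $K$ (it already holds with $K_0\le K$). I expect the main obstacle — and the reason for the case split — to be that the naive ``differentiate the norm'' estimate for $\psi_\omega$ degenerates as $z\to0$ when $\omega$ is not H\"older, since $\omega(\|z\|)/\|z\|^\alpha$ then blows up; this is repaired precisely by treating small $\|z\|$ by the bare triangle inequality and by the convexity trick that supplies the factor $\lambda(1-\lambda)$.
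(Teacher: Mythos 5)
Your proposal is correct, and it takes a genuinely different route from the paper. The paper first rewrites \eqref{eq:modulussmoothnessalpha} as the convex-combination smoothness inequality for $\psi_\alpha=\varphi_\alpha\circ\|\cdot\|$, deduces $\psi_\alpha\in C^{1,\alpha}(X)$ with $\lip_\alpha(D\psi_\alpha,X)\le\left(\tfrac{1+\alpha}{2\alpha}\right)^\alpha C$, then invokes the argument of \cite[Lemma 3.6]{AM21} to transfer this H\"older estimate into the bound $\|D\psi_\omega(x)-D\psi_\omega(y)\|_*\le K\omega(\|x-y\|)$ (this is where \eqref{eq:assumptiononOmegaAlpha} enters there), and finally recovers the convex-combination inequality by integrating $D\psi_\omega$ along segments. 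You instead never differentiate $\psi_\alpha$: you encode \eqref{eq:assumptiononOmegaAlpha} as concavity of the outer function $\widetilde\rho$ in the factorization $\psi_\omega=\widetilde\rho\circ N_\alpha$, prove the midpoint second-difference bound for $\psi_\omega$ directly (case split $\|z\|\le\|h\|$ versus $\|z\|>\|h\|$, the second case using the tangent-line bound for $\widetilde\rho$ at $N_\alpha(z)$ together with the monotonicity of $\omega(t)/t^\alpha$), upgrade it to the $\lambda(1-\lambda)$-form by the convexity identity $\lambda^2+(1-\lambda)^2-1=-2\lambda(1-\lambda)$, and only then obtain the derivative estimate from Lemma \ref{lem:convexcombregularity_impliesC1omega} --- i.e.\ you run the two implications in the opposite order from the paper (convex-combination inequality first, derivative bound second, rather than derivative bound first and FTC second). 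Your steps check out: the concavity of $\widetilde\rho$ is exactly \eqref{eq:assumptiononOmegaAlpha}, the constant $C\ge 2^{1-\alpha}\ge 1$ does follow from testing \eqref{eq:modulussmoothnessalpha} at $y=-x$, $\lambda=\tfrac12$, and the resulting $K=\tfrac43\cdot 8\,C=\tfrac{32}{3}C$ is of the required form $C\cdot K(\alpha)$ (in fact uniform in $\alpha$), whereas the paper's constant is $1+4^\alpha\left(\tfrac{1+\alpha}{2\alpha}\right)^\alpha C$. What your approach buys is self-containedness (no appeal to the external \cite{AM21} transfer argument) and an explicit, $\alpha$-uniform constant; what the paper's buys is brevity on the page by reusing its own $C^{1,\alpha}$ machinery and a cited lemma. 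One tiny point of hygiene: your use of $\widetilde\rho'$ needs $N_\alpha(z)>0$ and differentiability of $\varphi_\omega$, both of which hold in your second case ($z\neq0$, $\varphi_\omega\in C^1$ since $\omega$ is continuous), so this is not a gap, but it is worth saying explicitly.
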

\begin{proof}
With an identical argument to that in the proof of Lemma \ref{lem:regularity_psiomega_Hilbert}, the inequality \eqref{eq:modulussmoothnessalpha} can be rewritten as
$$
\lambda \|x+(1-\lambda)h \|^{1+\alpha} + (1-\lambda) \|x-\lambda h\|^{1+\alpha} - \|x\|^{1+\alpha} \leq C \cdot \lambda (1-\lambda) \cdot K \|h\|^{1+\alpha}, \quad \lambda \in [0,1], \, z,h \in X,
$$
or as
$$
\lambda \psi_\alpha(x+(1-\lambda)h) + (1-\lambda) \psi_\alpha(x-\lambda h) - \psi_\alpha (x) \leq C \cdot \lambda (1-\lambda)  \varphi_\alpha(\|h\|), \quad \lambda \in [0,1], \, z,h \in X;
$$
where $\varphi_\alpha(t) = \frac{t^{1+\alpha}}{1+\alpha}$ and $\psi_\alpha = \varphi_\alpha \circ \| \cdot \|.$ Applying Theorem \ref{thm:relationsAseminorm_Lip}, we get that $\psi_\alpha \in C^{1,\alpha}(X)$ with
$$
\lip_\alpha( D \psi_\alpha , X) \leq \left(\frac{1 + \alpha}{2\alpha}\right)^\alpha \cdot C.
$$
Following the argument in the proof of \cite[Lemma 3.6]{AM21}, we derive
$$
\norm{D \psi_\omega(x)- D\psi_\omega (y)}_* \leq K \omega(\norm{x-y}), \quad \text{where } \:  K: = 1+  4^\alpha \left(\frac{1 + \alpha}{2\alpha}\right)^\alpha \cdot C.
 $$
Also, for $\lambda \in [0,1],$ $z,h\in X,$ the Fundamental Theorem of Calculus yield
$$
\psi_\omega(z+(1-\lambda) h) -\psi_\omega(z) = \int_0^1 D\psi_\omega(z+ t(1-\lambda)h))((1-\lambda)h) \ud t,
$$
and
$$
\psi_\omega(z-\lambda  h) -\psi_\omega(z) = \int_0^1 D\psi_\omega(z- t \lambda h))(-\lambda h) \ud t.
$$
Multiplying the first identity by $\lambda$ and the second one by $(1-\lambda),$ and adding the resulting identities, we get
\begin{align*}
\lambda \psi_\omega(z+(1-\lambda) h) & + (1-\lambda) \psi_\omega(z-\lambda h) - \psi_\omega(z) \\
&  = \lambda (1-\lambda) \int_0^1 \left[ D\psi_\omega(z+ t(1-\lambda)h))-D\psi_\omega(z- t \lambda h)) \right](h) \ud t \\
& \leq \lambda (1-\lambda) K \int_0^1 \omega(t\|h\|) \|h\| \ud t =  \lambda (1-\lambda) K \varphi_\omega(\|h\|).
\end{align*}

\end{proof}

We are now ready to prove Theorem \ref{thm:mainsuperreflexive}.

\begin{proof}[Proof of Theorem \ref{thm:mainsuperreflexive}] We have from Lemma \ref{lem:regularityofpsi_w} that, if we define $\psi_\omega = \varphi_\omega \circ \| \cdot \|,$
$$
\lambda \psi_\omega(z+(1-\lambda) h) + (1-\lambda) \psi_\omega(z-\lambda h) - \psi_\omega(z) \leq   \lambda (1-\lambda) \cdot C \cdot K(\alpha)  \cdot \varphi_\omega(\|h\|)
$$ 
for all $\lambda\in [0,1]$, and $z,h \in X.$ This is precisely the inequality \eqref{eq:assumptiononregularitypsiomega_generaltheorem} with $K= C \cdot K(\alpha)$ in Theorem \ref{thm:technicalgeneraltheorem}.

If we let $E \subset X$ and $(f,G)$ a jet in $E,$ Then there exists $F\in C^{1,\omega}(X)$ convex with $(F,D F) = (f,G)$ on $E$ if and only if $(f,G)$ satisfies the condition \eqref{eq:definitionCw1omega_forallx} for some $M>0.$ Moreover, the formula
\begin{equation*}
F:= \conv(g), \quad g(x) = \inf_{y\in E} \lbrace f(y) + G(y)(x-y) + M \varphi_\omega(\|x-y\|) \rbrace, \quad x\in X
\end{equation*}
defines such an extension, and
$$
\lip_\omega(DF,X) \leq \frac{4}{3} \cdot K \cdot M = \frac{4}{3} \cdot C \cdot K(\alpha) \cdot M.
$$
Furthermore, if in addition $G: E \to X^*$ is bounded and $L:=\sup_{z\in E} \|G(z)\|_*$, then the function
$$
F_L:= \conv_L(g)
$$
has the same properties as those of $F$, with the additional feature that
\begin{equation*}
  \sup_{x\in X} \|DF_L(x) \|_* = \lip(F_L) = L = \sup_{z\in E} \|G(z)\|_*
\end{equation*}
Which is exactly our result from Theorem \ref{thm:mainsuperreflexive}.
\end{proof}

\subsection{The Hilbert case: proof of Theorems \ref{thm:mainHilbert} and \ref{thm:mainHilbertHoldercase}}\label{sect:Hilbertcasebounds}

We will employ Theorem \ref{thm:technicalgeneraltheorem}, with suitable constants $K>0$ in the inequality \eqref{eq:assumptiononregularitypsiomega_generaltheorem}.

\begin{proof}[Proof of Theorem \ref{thm:mainHilbert}] Assume now that $(X, | \cdot|)$ is a Hilbert space and $\omega$ is an increasing modulus of continuity with $\omega(\infty)=\infty$. By Lemma \ref{lem:regularity_psiomega_Hilbert}, the inequality \eqref{eq:assumptiononregularitypsiomega_generaltheorem} holds with constant $K=2$ for $\psi_\omega$.

According to Lemma \ref{lem:rewriteCW1omega}, conditions \eqref{eq:definitionCw1omega_forallx} and \eqref{eq:definitionCw1omega} are fully identical with the same constant $M>0$ for a jet $(f,G)$ on a subset $E$ of $X.$ Therefore, the necessity and sufficiency of the extendability condition \eqref{eq:definitionCw1omega} in Theorem \ref{thm:mainHilbert} follow from those of Theorem \ref{thm:technicalgeneraltheorem}. Moreover, Theorem \ref{thm:technicalgeneraltheorem} can be applied with $K=2$, thus obtaining the desired extensions via formula \eqref{eq:formulaextensionC1w_generaltheorem}, and with the bounds
\begin{equation*}
A_{\omega, \co}(F,\nabla F,X) \leq 2 \cdot M  \quad \text{and} \quad \lip_\omega(\nabla F,X) \leq \frac{8}{3} \cdot M,
\end{equation*}
provided the initial jet $(f,G)$ satisfies condition \eqref{eq:definitionCw1omega_forallx} with constant $M>0$ on $E.$ The part of Theorem \ref{thm:mainHilbert} that concerns the preservation of the Lipschitz constant, provided $G$ is bounded, follows immediately from Theorem \ref{thm:technicalgeneraltheorem}.

\end{proof}

We now give the proof of Theorem \ref{thm:mainHilbertHoldercase}, where we obtain better bounds for $A_{\omega, \co}(F,\nabla F,X)$ for H\"older moduli of continuity.

\begin{proof}[Proof of Theorem \ref{thm:mainHilbertHoldercase}]
If we let $\omega(t) = t^\alpha$, with $0 < \alpha < 1,$ we get from the second part of Lemma \ref{lem:regularity_psiomega_Hilbert} that the function $ \psi_\omega$ satisfies the inequality \eqref{eq:assumptiononregularitypsiomega_generaltheorem} with constant $K=2^{1-\alpha}.$ Since $\omega$ is increasing with $\omega(\infty)= \infty,$ the conditions \eqref{eq:definitionCw1omega_forallx} and \eqref{eq:definitionCw1omega} are identical, and, in turn, identical to \eqref{eq:definitionCW1alpha}. The necessity and sufficienty of the condition \eqref{eq:definitionCW1alpha} then follows from this observation and Theorem \ref{thm:technicalgeneraltheorem}. Moreover, taking into account that $\varphi_\omega(t) = \int_0^ts^\alpha \ud s = \frac{1}{1+\alpha}t^{1 + \alpha}$, Theorem \ref{thm:technicalgeneraltheorem} tells us that the function $F :X \to \R$ given by the formula
$$ 
F:= \conv(g), \quad g(x) = \inf_{y\in E} \lbrace f(y) + G(y)(x-y) + \frac{M}{1 + \alpha}|x-y|^{1 + \alpha}) \rbrace, \quad x\in X,
$$
defines such an extension; where $M>0$ is so that $(f,G)$ satisfies \eqref{eq:definitionCW1alpha} with constant $M.$ Moreover, Theorem \ref{thm:technicalgeneraltheorem} also provides the following bounds for the extension $(F,\nabla F)$ of $(f,G)$:
$$
A_{\alpha, \co}(F,\nabla F,X) \leq 2^{1 - \alpha} M, \quad \lip_\omega(\nabla F,X) \leq 2^{1 - \alpha}\left(\frac{1 + \alpha}{2\alpha}\right)^\alpha M.
$$
And, as before we get the desired Lipschitz properties from Theorem \ref{thm:technicalgeneraltheorem}, in the case where $G : E \to X$ is a bounded map. 

\end{proof}

\section{Lipschitz-preserving $C^1$ convex extensions: proof of Theorem \ref{thm:C1optimal}}\label{sect:C1case}

Let $X$ be a superreflexive space, and let $ \alpha \in (0,1]$ be as in \eqref{eq:modulussmoothnessalpha}. Also, let $E \subset X$ be a compact set, and $(f,G) : E \to \R \times X^*$ be a $1$-jet with $G$ continuous on $E$ and so that $(f,G)$ satisfies conditions \eqref{eq:condition(C)} and \eqref{eq:condition(CW1)}. Also, denote
$$
L : = \sup_{z\in E} \|G(z)\|_*.
$$
As we observed right after Theorem \ref{thm:C1optimal}, the function $f : E  \to \R$ is $L$-Lipschitz on $E.$ The idea is constructing a modulus of continuity $\omega$ with the property \eqref{eq:assumptiononOmegaAlpha} and so that $(f,G)$ satisfies condition \eqref{eq:definitionCw1omega_forallx} for some constant $M>0$ on $E.$ In order to do that, we start defining
\begin{equation}\label{eq:definitionpremodulus_delta}
\delta(t) = \sup\left\lbrace \frac{f(z) + G(z)(x-z) - f(y)-G(y)(x-y)}{\|x-y\|} \,  : \, 0 < \|x-y\| \leq t, \, x\in X, \, y,z\in E \right\rbrace, \quad t\geq 0. 
\end{equation}

\begin{lemma}\label{lem:premodulusdelta}
We have $0 \leq \delta(t) \leq 2L$ for all $t\geq 0,$ $\delta$ is non-decreasing, and $\lim_{t \to 0^+} \delta(t) =0.$ 
\end{lemma}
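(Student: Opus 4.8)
The plan is to verify the three claimed properties of $\delta$ directly from its definition in \eqref{eq:definitionpremodulus_delta}, using the two hypotheses \eqref{eq:condition(C)} and \eqref{eq:condition(CW1)} on the jet $(f,G)$, the continuity of $G$, and the compactness of $E$.

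First I would handle the easy parts. Non-negativity of $\delta$ follows by taking $z=y$ in the defining supremum, since then the numerator is $0$; and monotonicity is immediate because enlarging $t$ enlarges the set over which the supremum is taken. For the upper bound $\delta(t) \le 2L$, fix $x\in X$ and $y,z\in E$ with $0<\|x-y\|\le t$. The numerator splits as
\begin{align*}
f(z)+G(z)(x-z)-f(y)-G(y)(x-y) &= \big(f(z)+G(z)(y-z)-f(y)\big) + \big(G(z)-G(y)\big)(x-y).
\end{align*}
By \eqref{eq:condition(C)} the first bracket is $\le 0$, and by Cauchy--Schwarz/duality the second is $\le \|G(z)-G(y)\|_*\,\|x-y\| \le 2L\,\|x-y\|$. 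Dividing by $\|x-y\|$ and taking the supremum gives $\delta(t)\le 2L$.

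The main obstacle is showing $\lim_{t\to 0^+}\delta(t)=0$; this is where compactness and condition \eqref{eq:condition(CW1)} are essential, and I expect it to require a compactness/contradiction argument. Suppose not: then there is $\varepsilon>0$ and sequences $t_k\to 0^+$, $x_k\in X$, $y_k,z_k\in E$ with $0<\|x_k-y_k\|\le t_k$ and
$$
f(z_k)+G(z_k)(x_k-z_k)-f(y_k)-G(y_k)(x_k-y_k) \ge \varepsilon\,\|x_k-y_k\|.
$$
Using the splitting above and $f(z_k)+G(z_k)(y_k-z_k)-f(y_k)\le 0$, this forces
$$
\big(G(z_k)-G(y_k)\big)\!\left(\tfrac{x_k-y_k}{\|x_k-y_k\|}\right) \ge \varepsilon .
$$
Since $\|x_k-y_k\|\le t_k\to 0$, the points $x_k$ and $y_k$ get arbitrarily close. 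By compactness of $E$, pass to subsequences so that $y_k\to y_0$ and $z_k\to z_0$ in $E$; then also $x_k\to y_0$. Continuity of $G$ gives $G(y_k)\to G(y_0)$ and $G(z_k)\to G(z_0)$. The displayed inequality, together with $\|z_k-y_k\|\le \|z_k-y_0\|+\|y_0-y_k\|$ staying bounded, forces (after another pass to the limit, controlling $f(z_k)+G(z_k)(y_k-z_k)-f(y_k)$, which is $\le 0$ and tends to $f(z_0)+G(z_0)(y_0-z_0)-f(y_0)\le 0$) that in the limit
$$
f(z_0)+G(z_0)(y_0-z_0)-f(y_0) + \liminf_k \big(G(z_k)-G(y_k)\big)(x_k-y_k)/\|x_k-y_k\| \ \text{is constrained},
$$
and more cleanly: the normalized inequality $(G(z_k)-G(y_k))(u_k)\ge\varepsilon$ with $u_k$ unit vectors and $G(z_k)-G(y_k)\to G(z_0)-G(y_0)$ forces $\|G(z_0)-G(y_0)\|_*\ge\varepsilon$, so $z_0\ne y_0$. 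On the other hand, dividing the original inequality by $\|x_k-y_k\|$ and keeping the \emph{un}-normalized form, multiply back by $\|x_k-y_k\|\to 0$: the term $(G(z_k)-G(y_k))(x_k-y_k)\to 0$, while $f(z_k)+G(z_k)(y_k-z_k)-f(y_k)\to f(z_0)+G(z_0)(y_0-z_0)-f(y_0)$. Since the whole expression is $\ge \varepsilon\|x_k-y_k\|\to 0$ but is also $\le 0$ by \eqref{eq:condition(C)} plus a vanishing term, we get $f(z_0)+G(z_0)(y_0-z_0)=f(y_0)$, i.e. equality in \eqref{eq:condition(C)}. Then \eqref{eq:condition(CW1)} forces $G(z_0)=G(y_0)$, contradicting $\|G(z_0)-G(y_0)\|_*\ge\varepsilon>0$. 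This contradiction completes the proof; the delicate point to get right is disentangling the two limiting regimes (normalized difference quotient versus un-normalized numerator) so that both the "$G$ differs by at least $\varepsilon$" and the "equality holds in \eqref{eq:condition(C)}" conclusions can be extracted simultaneously.
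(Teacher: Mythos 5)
Your proposal is correct and follows essentially the same route as the paper: the trivial parts are handled identically, and the limit $\delta(0^+)=0$ is proved by the same contradiction argument combining compactness of $E$, continuity of $f$ and $G$, condition \eqref{eq:condition(C)} to force equality in the limit, and \eqref{eq:condition(CW1)} to contradict $\|G(z_0)-G(y_0)\|_*\geq\varepsilon$. The only difference is cosmetic — you extract the lower bound on $\|G(z_0)-G(y_0)\|_*$ before passing to the limit in the un-normalized numerator, whereas the paper does these two steps in the opposite order.
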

\begin{proof}
Given $y,z\in E$ we have that $f(y) \geq f(z)+ G(z)(y-z)$ according to condition \eqref{eq:condition(C)}. For every $x\in X,$ we have the estimates
\begin{align*}
f(z) + G(z)(x-z) - f(y)-G(y)(x-y) &  = f(z)  + G(z)(x-z) - f(z)-G(z)(y-z) -  G(y)(x-y)  \\
& = (G(z)-G(y))(x-y) \\
& \leq \|G(z)-G(y)\|_* \| x-y\| \\
& \leq 2 L \|x-y\|.
\end{align*}
Thus $\delta(t) \leq 2 L$ for all $t \geq 0.$ Also, if we take $z=y$ in the supremum defining $\delta(t),$ we clearly get $\delta(t) \geq 0.$

Finally, assume, for the sake of contradiction that there is a sequence $\lbrace t_n \rbrace_n \downarrow 0$ and $\varepsilon>0$ with $\delta(t_n) > \varepsilon$ for all $n\in \N.$ By the definition of $\delta(t_n) ,$ we can find points $x_n \in X,$ $y_n,z_n \in E$ with $x_n \neq y_n$ for all $n\in \N$ and so that
\begin{equation}\label{eq:approximation_premodgoesto0}
t_n = \|x_n-y_n\| \quad \text{and} \quad  f(z_n) + G(z_n)(x_n-z_n) - f(y_n)-G(y_n)(x_n-y_n)  >    t_n  \varepsilon, \quad n \in \N. 
\end{equation}
By the compactness of $E,$ passing to subsequences we may assume that $\lbrace y_n \rbrace_n$ and $\lbrace z_n \rbrace_n$ converge to points $y,z\in E$ respectively. Since $\lbrace t_n  \rbrace \to 0,$ one has that $\lbrace x_n \rbrace_n$ converges to $y$ as well. Moreover, taking limits in the inequality above and using the continuity of both $f$ and $G,$ we arrive that
$$
f(z)+G(z)(y-z) - f(y) \geq 0.
$$
But condition \eqref{eq:condition(C)} then implies that $f(y) = f(z) + G(z)(y-z).$ Using the other condition \eqref{eq:condition(CW1)}, we get that $G(y)=G(z).$ But using again \eqref{eq:condition(C)} we have that
$$
f(y_n) \geq f(z_n) + G(z_n) (y_n-z_n), \quad n\in \N,
$$ 
and inserting this into \eqref{eq:approximation_premodgoesto0} we arrive at
\begin{align*}
\varepsilon & < \frac{f(z_n) + G(z_n)(x_n-z_n) - f(y_n)-G(y_n)(x_n-y_n)}{t_n} \leq \frac{(G(z_n)-G(y_n))(x_n-y_n)}{t_n} \\
& \leq \frac{\|G(z_n)-G(y_n)\|_* \|x_n-y_n\|}{t_n} = \|G(z_n)-G(y_n)\|_*.
\end{align*} 
for all $n\in \N.$ The limit of the last term as $n \to \infty$ is, by the continuity of $G,$ precisely $\|G(z)-G(y\| = 0.$ This is a contradiction.

  We may conclude that $\lim_{t \to 0^+} \delta(t)=0.$ 
\end{proof}
 
We now construct a honest modulus of continuity between $\delta$ and $2L.$ The construction is almost the same as in the proof of \cite[Theorem 1]{AP56}.

\begin{lemma}\label{lem:constuctionmodulus}
There exists a modulus of continuity $\Delta : [0, \infty) \to [0, \infty)$ with $\delta(t) \leq \Delta(t) \leq 2L$ for all $t \geq 0.$ 
\end{lemma}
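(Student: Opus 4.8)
I want to construct a concave, non-decreasing function $\Delta$ sandwiched between $\delta$ and the constant $2L$, with $\Delta(0)=\Delta(0^+)=0$. The natural candidate is the \emph{least concave majorant} of $\delta$ on $[0,\infty)$, i.e. the infimum of all concave functions lying above $\delta$. Since $\delta$ is bounded above by the (concave, constant) function $2L$, this family is non-empty, and the pointwise infimum of concave functions is concave. The infimum of non-decreasing functions is non-decreasing, and since every competitor satisfies $h \geq \delta \geq 0$ we get $\Delta \geq 0$; since $2L$ is itself a competitor we get $\Delta \leq 2L$. So the only real issue is the behaviour of $\Delta$ at $0$: we need $\Delta(0)=0$ and $\Delta(0^+)=0$. (Monotonicity plus concavity force continuity on $(0,\infty)$ automatically, and concavity forces $\Delta(0^+)\le\Delta(t)$ for all $t>0$, so right-continuity at $0$ is the one delicate point.)

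\textbf{Carrying it out.} First I would record the explicit formula for the least concave majorant on an interval: for a bounded function $\delta$ on $[0,T]$ one has
\[
\widehat{\delta}(t) = \sup\left\lbrace \lambda \delta(t_0) + (1-\lambda)\delta(t_1) \, : \, \lambda\in[0,1],\ t_0,t_1 \in [0,T],\ \lambda t_0 + (1-\lambda) t_1 = t \right\rbrace,
\]
and check this is concave, non-decreasing (using that $\delta$ is non-decreasing), lies between $\delta$ and $2L$, and vanishes at $0$ (take $t_0=t_1=0$ to see $\widehat{\delta}(0)=\delta(0)=0$). To get $\Delta$ on all of $[0,\infty)$, since $\delta$ is bounded by $2L$ I can simply take $\Delta := \widehat{\delta}$ computed over $[0,\infty)$ with the same supremum formula, or pass to a limit of the majorants $\widehat{\delta}_T$ built on $[0,T]$ as $T\to\infty$; either way the boundedness by $2L$ keeps everything finite. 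Concavity on $[0,\infty)$ together with $\Delta(0)=0$ and $\Delta\le 2L$ then automatically gives $\Delta(t)/t$ non-increasing, hence $\Delta$ continuous on $(0,\infty)$.

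\textbf{The main obstacle.} The crux is proving $\Delta(0^+)=0$, and this is exactly where the hypothesis $\lim_{t\to 0^+}\delta(t)=0$ from Lemma~\ref{lem:premodulusdelta} is used. The point is that a concave majorant of $\delta$ need not have a small limit at $0$ just because $\delta$ does --- e.g.\ the least concave majorant of $\delta(t)=\sqrt{t}$ near $0$ is again $\sqrt{t}$, which is fine, but one must rule out that the majorant is forced upward by distant values of $\delta$. Concretely: fix $\eta>0$; since $\delta(t)\to 0$, choose $s>0$ with $\delta(t)\le \eta$ for $t\le s$; the function $t\mapsto \min\{\eta + (2L/s)\, t,\ 2L\}$ is concave, non-decreasing, and dominates $\delta$ everywhere (it is $\ge \eta\ge\delta$ on $[0,s]$ and $\ge 2L\ge\delta$ for $t\ge s$), so it is a competitor, whence $\Delta(t)\le \eta + (2L/s)t$ for all $t$. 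Letting $t\to 0^+$ gives $\Delta(0^+)\le \eta$, and since $\eta>0$ was arbitrary, $\Delta(0^+)=0$. This is precisely the argument from \cite[Theorem 1]{AP56} referenced in the statement, specialized to a bounded pre-modulus. Finally I would note in passing that $\Delta$ is not claimed here to satisfy \eqref{eq:assumptiononOmegaAlpha} --- that refinement is presumably arranged in the next step of the construction --- so the present lemma is complete once the four properties (concave, non-decreasing, $\Delta(0)=\Delta(0^+)=0$, and $\delta\le\Delta\le 2L$) are verified.
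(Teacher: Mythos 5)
Your proposal is correct, but it organizes the work differently from the paper. The paper does not pass through the least concave majorant: it defines an explicit candidate $\delta_1(t)=\inf_{0<s<1}\bigl\{\delta(s)+\tfrac{2L}{s}\,t\bigr\}$, gets concavity and monotonicity for free (infimum of non-decreasing affine functions), proves $\delta_1(0^+)=0$ by choosing $s=\sqrt{t}$, proves the majorization $\delta\le\delta_1$ by hand via a two-case analysis ($t\ge 1$ using $\delta\le 2L$; $t<1$ splitting into $s<t$ and $s\ge t$), and finally truncates, setting $\Delta=\min\{2L,\delta_1\}$. You instead take $\Delta$ to be the least concave majorant of $\delta$, so that $\delta\le\Delta\le 2L$, concavity and (with your explicit convex-combination formula, or by noting that any concave majorant of a nonnegative function on $[0,\infty)$ is automatically non-decreasing) monotonicity are immediate, and the only genuine work is $\Delta(0^+)=0$; you settle that by exhibiting the truncated affine competitor $\min\{\eta+(2L/s)t,\,2L\}$ with $\delta\le\eta$ on $[0,s]$. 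Amusingly, these competitors are exactly the paper's building blocks $\delta(s)+\tfrac{2L}{s}t$ (truncated at $2L$), used as upper bounds rather than as the definition, so the two arguments are close in spirit; yours yields the minimal admissible $\Delta$ and avoids the case analysis, while the paper's yields a concrete formula and needs no appeal to the abstract infimum over all concave majorants. The only points to tidy in your write-up are the value at zero (the supremum defining $\delta(0)$ runs over an empty set, so one should adopt the convention $\delta(0)=0$, as the paper implicitly does) and, if you rely on the infimum-over-majorants description rather than the convex-combination formula, a one-line justification that the competitors may be assumed non-decreasing; neither affects the validity of the argument.
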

\begin{proof}
Define
$$
\delta_1(t) : = \inf_{0<s<1} \left\lbrace \delta(s) + \frac{2L}{s} t \right\rbrace, \quad t \geq 0. 
$$
Since $\delta$ is nonnegative, $\delta_1$ is nonnegative too. Also, $\delta_1$ is an infimum of a family of non-decreasing and concave (affine) functions, and so $\delta_1$ is non-decreasing and concave as well.

Also, if $t_n \downarrow 0$, we can select $s_n= \sqrt{t_n}$ in the infimum, thus obtaining
$$
\delta_1(t_n) \leq \delta(s_n) + \frac{2L}{s_n} t_n  = \delta(s_n) +  2L  s_n.
$$
By Lemma \ref{lem:premodulusdelta}, we have that $\delta(0^+)=0$, and so the last term tends to $0.$ This shows that $\delta_1(0^+)$, and thus $\delta_1$ satisfies all the properties of a modulus of continuity.

We now want to prove that $\delta(t) \leq \delta_1(t)$ for all $t \geq 0$. In the case where $t \geq 1$, the inequality follows from the fact that $\delta(t) \leq 2L.$ And if $t < 1,$ we again use the same fact to get
$$
\delta_1(t) \geq  \inf_{0<s<1} \left\lbrace \delta(s) + \frac{t}{s} \delta(t) \right\rbrace.
$$
Studying separately the cases $s < t$ and $s \geq t$ in the infimum, it is clear that the last term is not smaller than $\delta(t),$ as desired.

After this, one can simply set $\Delta(t) :=\min \lbrace 2L, \delta_1(t) \rbrace$ for all $t \geq 0.$ Since $\delta(t) \leq 2L$ for all $t \geq 0,$ clearly $\delta(t) \leq \Delta(t) \leq 2L.$ Also, $\Delta$ is a concave, non-decreasing, and nonnegative, as a minimum of two functions with those properties. Since $\lim_{t\to 0^+} \delta_1(t)=0,$ it is obvious that the same holds with $\Delta$ in place of $\delta_1.$ 
\end{proof}

We now modify $\Delta$ to obtain a new modulus that, additionally, satisfies \eqref{eq:assumptiononOmegaAlpha}, for the parameter $ \alpha \in (0,1]$ given by the space $X.$

\begin{lemma}\label{lem:constucitonmodulus_slowerthanalpha}
Define
$$
\omega(t) = \left( \Delta(t) \right)^\alpha, \quad t \geq 0.
$$
Then $\omega$ is a modulus of continuity satisfying \eqref{eq:assumptiononOmegaAlpha} and
$$
\Delta(t) \leq    (2L)^{1-\alpha}  \omega(t), \quad t \geq 0.
$$
\end{lemma}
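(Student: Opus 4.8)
The plan is to verify the three assertions in turn, all of them routine consequences of the fact that $\Delta$ is itself a modulus of continuity bounded above by $2L$, and that $0<\alpha\le 1$.

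First I would check that $\omega(t)=(\Delta(t))^\alpha$ is a modulus of continuity. Non-negativity is immediate from $\Delta\ge 0$, and monotonicity is immediate since $\Delta$ is non-decreasing and $s\mapsto s^\alpha$ is non-decreasing on $[0,\infty)$. For $\omega(0)=\omega(0^+)=0$ one uses $\Delta(0)=\Delta(0^+)=0$ together with continuity of $s\mapsto s^\alpha$ at $0$. The only genuinely non-trivial point is \emph{concavity} of $\omega$: here one invokes the standard fact that the composition of a concave non-decreasing function ($s\mapsto s^\alpha$ on $[0,\infty)$, which is concave because $0<\alpha\le 1$) with a concave function ($\Delta$) is concave. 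This gives that $\omega$ is a modulus of continuity.

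Next I would verify condition \eqref{eq:assumptiononOmegaAlpha}, i.e.\ that $t\mapsto t^\alpha/\omega(t)$ is non-decreasing. By definition
$$
\frac{t^\alpha}{\omega(t)}=\frac{t^\alpha}{(\Delta(t))^\alpha}=\left(\frac{t}{\Delta(t)}\right)^\alpha ,
$$
so it suffices to show $t\mapsto t/\Delta(t)$ is non-decreasing; but this is exactly Proposition~\ref{prop:inequalities_omega_varphiomega}(1) applied to the modulus of continuity $\Delta$ (together with the fact that $s\mapsto s^\alpha$ is non-decreasing, so composing preserves monotonicity). One should note the harmless convention at $t=0$, where the quotient is interpreted as a limit.

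Finally, for the pointwise bound: since $0\le\Delta(t)\le 2L$ and $0<\alpha\le 1$, for each $t\ge 0$ we have $\Delta(t)^{1-\alpha}\le (2L)^{1-\alpha}$ (this uses $1-\alpha\ge 0$ and monotonicity of $s\mapsto s^{1-\alpha}$), whence
$$
\Delta(t)=\Delta(t)^{1-\alpha}\,\Delta(t)^{\alpha}\le (2L)^{1-\alpha}\,(\Delta(t))^{\alpha}=(2L)^{1-\alpha}\,\omega(t).
$$
(If $L=0$ then $\Delta\equiv 0$ and everything is trivial.) I do not anticipate any real obstacle in this lemma; the one place that requires a moment's care is the concavity of $\omega$, which rests on the composition principle for concave non-decreasing functions rather than on any direct computation.
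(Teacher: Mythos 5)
Your proposal is correct and follows essentially the same route as the paper: concavity of $\omega=\Delta^\alpha$ via composition of the concave non-decreasing map $s\mapsto s^\alpha$ with the concave $\Delta$ (the paper just writes out this composition inequality explicitly), condition \eqref{eq:assumptiononOmegaAlpha} from the monotonicity of $t\mapsto t/\Delta(t)$ given by Proposition \ref{prop:inequalities_omega_varphiomega}, and the final bound from the factorization $\Delta(t)=\Delta(t)^{1-\alpha}\Delta(t)^{\alpha}\leq (2L)^{1-\alpha}\omega(t)$.
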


\begin{proof}
To show that $\omega$ is a modulus of continuity, the only non-trivial property is perhaps that $\omega$ is concave. This is easy and follows from the fact that $\Delta$ is concave, and that $[0, \infty) \ni r \to r^\alpha$ is concave and non-decreasing. We include the short argument for the sake of completeness. If $\lambda \in [0,1]$ and $t_1,t_2 \geq 0$, we write
\begin{align*}
    \omega\left( \lambda t_1 + (1-\lambda) t_2 \right) &  =  \left( \Delta( \lambda t_1 + (1-\lambda) t_2 ) \right)^\alpha \geq  \left( \lambda \Delta(t_1) + (1-\lambda) \Delta(t_2)\right)^\alpha \\
    & \geq  \lambda \left( \Delta(t_1) \right)^\alpha  +  (1-\lambda) \left( \Delta(t_2) \right)^\alpha = \lambda\omega(t_1) + (1 - \lambda)\omega(t_2).
\end{align*}
To show that this satisfies \eqref{eq:assumptiononOmegaAlpha}, note that $t \mapsto t/\Delta(t)$ is non-decreasing, as $\Delta$ is a modulus of continuity; see Proposition \ref{prop:inequalities_omega_varphiomega}. Since $\omega= \Delta^\alpha$, we then get that $t \mapsto t^\alpha/ \omega(t)$ is non-decreasing too. This is precisely condition \eqref{eq:assumptiononOmegaAlpha} for $\omega.$

Finally, the inequality easy follows from the definition of $\omega$ and the fact that $\Delta(t) \leq 2L$ for all $t \geq 0:$
$$
\Delta(t) = (\Delta(t))^{1-\alpha}  (\Delta(t))^\alpha \leq (2L)^{1-\alpha} \omega(t), \quad t \geq 0.
$$

\end{proof}

\begin{lemma}\label{lem:jetsatisfiesCW1w_incaseC1}
Let $\omega$ be the modulus defined in Lemma \ref{lem:constucitonmodulus_slowerthanalpha}. Then $(f,G)$ satisfies condition \eqref{eq:definitionCw1omega_forallx} with some constant $M>0$ on $E$.
\end{lemma}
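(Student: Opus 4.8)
The plan is to combine the definition of $\delta$ in \eqref{eq:definitionpremodulus_delta} with the comparison estimates from Lemmas \ref{lem:constuctionmodulus} and \ref{lem:constucitonmodulus_slowerthanalpha}, and then pass from a statement about the \emph{difference quotient} $\bigl(f(z)+G(z)(x-z)-f(y)-G(y)(x-y)\bigr)/\|x-y\|$ to a statement about $M\varphi_\omega(\|x-y\|)$. The key observation is that the construction was rigged so that $\delta(t)\le\Delta(t)$ and $\Delta(t)\le(2L)^{1-\alpha}\,\omega(t)$, so for all $y,z\in E$, $x\in X$ with $x\neq y$ we have
$$
f(z)+G(z)(x-z)-f(y)-G(y)(x-y) \;\le\; \delta(\|x-y\|)\,\|x-y\| \;\le\; (2L)^{1-\alpha}\,\omega(\|x-y\|)\,\|x-y\|.
$$
Thus it remains to bound $\omega(\|x-y\|)\,\|x-y\|$ by a constant multiple of $\varphi_\omega(\|x-y\|)$.

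First I would invoke Proposition \ref{prop:inequalities_omega_varphiomega}(2), which gives $\varphi_\omega(t)\ge (t/2)\,\omega(t)$, hence $t\,\omega(t)\le 2\varphi_\omega(t)$ for all $t\ge 0$. Applying this with $t=\|x-y\|$ yields
$$
f(z)+G(z)(x-z)-f(y)-G(y)(x-y) \;\le\; (2L)^{1-\alpha}\,\omega(\|x-y\|)\,\|x-y\| \;\le\; 2\,(2L)^{1-\alpha}\,\varphi_\omega(\|x-y\|)
$$
for all $y,z\in E$ and all $x\in X$ with $x\neq y$. Rearranging, this is precisely condition \eqref{eq:definitionCw1omega_forallx} with the constant $M:=2\,(2L)^{1-\alpha}$ (the case $x=y$ is trivial since the left-hand side is then $0$ by \eqref{eq:condition(C)} applied with the roles of $y,z$, or simply vanishes). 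Note $M$ is finite because $L<\infty$ on the compact set $E$ by continuity of $G$.

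I do not expect a serious obstacle here: the whole difficulty was front-loaded into Lemma \ref{lem:premodulusdelta} (showing $\delta(0^+)=0$, which is where compactness and the conditions \eqref{eq:condition(C)}–\eqref{eq:condition(CW1)} were genuinely used) and into the construction in Lemmas \ref{lem:constuctionmodulus}–\ref{lem:constucitonmodulus_slowerthanalpha}. The only point requiring a little care is bookkeeping with the exponents $\alpha$ and $1-\alpha$ and making sure the chain $\delta\le\Delta\le(2L)^{1-\alpha}\omega$ is applied at the argument $\|x-y\|$ rather than at some rescaled value; and one should double-check that $\omega$ as defined is increasing and coercive is \emph{not} needed for \eqref{eq:definitionCw1omega_forallx} (it is only the $\widetilde{CW^{1,\omega}}$ form, valid for any modulus), so no extra hypotheses on $\omega$ are required. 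Once this lemma is in hand, Theorem \ref{thm:mainsuperreflexive} applies to the jet $(f,G)$ with this $\omega$ (which satisfies \eqref{eq:assumptiononOmegaAlpha} by Lemma \ref{lem:constucitonmodulus_slowerthanalpha}) and the bounded-$G$ branch produces the convex $C^{1,\omega}(X)\subset C^1(X)$ extension $F_L$ with the sharp Lipschitz constant, which will complete the proof of Theorem \ref{thm:C1optimal}.
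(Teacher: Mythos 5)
Your proposal is correct and follows essentially the same route as the paper: bound the numerator by $\|x-y\|\,\delta(\|x-y\|)$ via \eqref{eq:definitionpremodulus_delta}, chain $\delta\le\Delta\le(2L)^{1-\alpha}\omega$ from Lemmas \ref{lem:constuctionmodulus} and \ref{lem:constucitonmodulus_slowerthanalpha}, and conclude with $t\,\omega(t)\le 2\varphi_\omega(t)$ from Proposition \ref{prop:inequalities_omega_varphiomega}, arriving at the same constant $M=2(2L)^{1-\alpha}$. Your side remarks (the trivial case $x=y$ reducing to \eqref{eq:condition(C)}, and that no coercivity of $\omega$ is needed for the $\widetilde{CW^{1,\omega}}$ form) are accurate and harmless.
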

\begin{proof}
We need to find $M>0$ for which
$$
f(z) + G(z)(x-z) - f(y)-G(y)(x-y) \leq M \varphi_\omega(\|x-y\|), \quad x\in X, \, y,z \in E.
$$
Let $x\in X,$ $y,z\in E $ and $M>0$ a large enough number to be fixed later. By the definition \eqref{eq:definitionpremodulus_delta} and Lemma \ref{lem:constuctionmodulus} one has
$$
f(z) + G(z)(x-z) - f(y)-G(y)(x-y) \leq \|x-y\| \delta(\|x-y\|) .
$$

From Lemma \ref{lem:constuctionmodulus} we have 
$$
f(z) + G(z)(x-z) - f(y)-G(y)(x-y) \leq \|x-y\| \Delta(\|x-y\|) .
$$
Subsisting our inequality from Lemma \ref{lem:constucitonmodulus_slowerthanalpha} we have 
$$ 
f(z) + G(z)(x-z) - f(y)-G(y)(x-y)  \leq \|x-y\| (2L)^{1-\alpha}\omega(\|x-y\|) .
$$ 
From the second property in Proposition \ref{prop:inequalities_omega_varphiomega} we have 
$$
f(z) + G(z)(x-z) - f(y)-G(y)(x-y) \leq 2(2L)^{1 - \alpha} \varphi_{\omega}(\|x-y\|).
$$
This completes the proof with the constant $M = 2(2L)^{1 - \alpha}.$
\end{proof}

We are now ready to prove Theorem \ref{thm:C1optimal}. 

\begin{proof}[Proof of Theorem \ref{thm:C1optimal}]
Let us first prove the necessity, that is, given $F\in C^1(X)$ and convex and let us prove the conditions \eqref{eq:condition(C)} and \eqref{eq:condition(CW1)} for the jet $(F, DF)$ for arbitrary $x,y\in X.$ By convexity and differentiability of $F,$ one has
$$
F(x) \geq F(y) + DF(y)(x-y), \quad x,y\in X.
$$
Also, if $F(x) = F(y) + DF(y)(x-y)$, we have, for all $z\in X$ that
$$
F(z) \geq F(y) + DF(y)(z-y) = F(x) + DF(y)(z-x).
$$
This shows that $DF(y) \in X^*$ belongs to the subdifferential $\partial F(x)$ of $F$ at the point $x.$ But since $F$ is differentiable (at $x$), $\partial F(x) = \lbrace DF(x) \rbrace,$ and this shows that necessarily $DF(y)= DF(x),$ as desired.

For the sufficiency, Lemma \ref{lem:jetsatisfiesCW1w_incaseC1} allows to apply Theorem \ref{thm:mainsuperreflexive} to the jet $(f,G)$ on $E.$ This completes the proof of Theorem \ref{thm:C1optimal}.
\end{proof}

\section*{Acknowledgements}

Carlos Mudarra was supported by the Marie Skłodowska-Curie (MSCA-EF) European Fellowship, grant number 101151594, from the Horizon Europe Funding program. He also acknowledges financial support from the Research Council of Norway via the project ``Fourier Methods and Multiplicative Analysis'', grant number 334466. 

\medskip

\bibliographystyle{amsplain}
\bibliography{main}

\end{document}